\documentclass[12pt]{iopart}
\expandafter\let\csname equation*\endcsname\relax
\expandafter\let\csname endequation*\endcsname\relax 
\usepackage[latin1]{inputenc}
\usepackage{amsmath,amsthm,enumitem,datetime,xfrac,fix-cm,color}
\usepackage{amsfonts,amssymb,mathtools,stmaryrd,dsfont,tikz,subcaption}
\usepackage{algorithm,algorithmic,graphicx}
\bibliographystyle{unsrt}
\captionsetup{font=footnotesize,labelfont=bf}

\definecolor{linkColor}{rgb}{0.5, 0.5, 0.5}
\usepackage[colorlinks, allcolors=linkColor, bookmarks=True]{hyperref}
\usetikzlibrary{positioning}
\DeclareGraphicsExtensions{.png, .pdf}
\numberwithin{equation}{section}
\newtheorem{definition}{Definition}[section]\newtheorem{theorem}{Theorem}[section]
\newtheorem{lemma}{Lemma}[section]\newtheorem{example}{Example}[section]
\newtheorem{remark}{Remark}[section]
\makeatletter\let\c@definition\c@theorem\let\c@definition\c@theorem\makeatother
\makeatletter\let\c@lemma\c@theorem\let\c@lemma\c@theorem\makeatother
\makeatletter\let\c@example\c@theorem\let\c@example\c@theorem\makeatother
\makeatletter\let\c@remark\c@theorem\let\c@remark\c@theorem\makeatother
\newtheorem*{theorem*}{Theorem}

\newcommand{\F}[1]{\mathds{ #1 }}\newcommand{\C}[1]{\mathcal{ #1 }}

\newcommand{\MTwo}[4]{\begin{pmatrix} #1 & #2 \\ #3 & #4 \end{pmatrix}}
\newcommand{\norm}[1]{\left\lVert #1 \right\rVert}\renewcommand{\sfrac}[2]{^{#1}\!\!/\!_{#2}}
\renewcommand{\IP}[2]{\left\langle #1,#2 \right\rangle}\newcommand{\1}[0]{\F{1}}
\newcommand{\op}[1]{\operatorname{#1}}

\newcommand{\splitln}[4]{\left\{\begin{array}{cc} #1 & #2 \\ #3 & #4\end{array}\right.}

\DeclareMathOperator{\st}{\;s.t.\;}\renewcommand{\epsilon}{\varepsilon}
\DeclareMathOperator{\union}{\cup}

\renewcommand{\vec}{\mathbf}
\DeclareMathOperator*{\argmin}{argmin}\DeclareMathOperator{\TV}{TV}\DeclareMathOperator{\DTV}{DTV}

\usepackage[normalem]{ulem}
\newcommand{\oldtext}[1]{}
\newcommand{\newtext}[1]{{#1}}

\begin{document}
	\author{Robert Tovey$^1$, Martin Benning$^2$, Christoph Brune$^3$, Marinus J. Lagerwerf$^4$, Sean M. Collins$^5$, Rowan K. Leary$^5$, Paul A. Midgley$^5$, Carola-Bibiane Sch\"onlieb$^1$}
	\address{1 Centre for Mathematical Sciences, University of Cambridge}
	\address{\newtext{2 School of Mathematical Sciences, Queen Mary University of London}}
	\address{3 University of Twente}
	\address{4 Centrum Wiskunde \& Informatica}
	\address{5 Department of Materials Science and Metallurgy, University of Cambridge}
	\date{\longdate\today}
	\title{Directional Sinogram Inpainting for Limited Angle Tomography}
	\begin{abstract}
		In this paper we propose a new joint model for the reconstruction of tomography data under limited angle sampling regimes. In many applications of Tomography, e.g. Electron Microscopy and Mammography, physical limitations on acquisition lead to regions of data which cannot be sampled. Depending on the severity of the restriction, reconstructions can contain severe, characteristic, artefacts. Our model aims to address these artefacts by inpainting the missing data simultaneously with the reconstruction. Numerically, this problem naturally evolves to require the minimisation of a non-convex and non-smooth functional so we review recent work in this topic and extend results to fit an alternating (block) descent framework. \oldtext{We illustrate the effectiveness of this approach with numerical experiments on two synthetic datasets and one Electron Microscopy dataset.} \newtext{We perform numerical experiments on two synthetic datasets and one Electron Microscopy dataset. Our results show consistently that the joint inpainting and reconstruction framework can recover cleaner and more accurate structural information than the current state of the art methods.}
	\end{abstract}
	
	\maketitle
	
	\section{Introduction}
	\subsection{Problem Formulation}
	Many applications in materials science and medical imaging rely on the X-ray transform as a mathematical model for performing 3D volume reconstructions of a sample from 2D data. We shall refer to any modality using the X-ray transform forward model as X-ray tomography. This encompasses a huge range of applications including Positron Emission Tomography (PET) in front-line medical imaging \cite{Ehrhardt2015}, Transmission Electron Microscopy (TEM) in materials or biological research \cite{Leary2013,Kubel2005,Zhao2013}, and X-ray Computed Tomography (CT) which enjoys success across many fields \cite{Kalender2006,Cnudde2013}. 
	
	\newtext{The limited angle problem is common in X-ray tomography, for instance in TEM \cite{Kawase2007} and Mammography \cite{Zhang2006}, and is caused by a particular limited data scenario. Algebraically, we search for approximate solutions to the inverse problem
	\[ \text{Given data } b\text{ find optimal pair } (u,v) \text{ such that }Sv = b, \C Ru = v\]
	where $\C R$ is the X-ray transform to be defined in \eqref{radon}, $S$ represents the limited angle sub-sampling pattern described in Figure \ref{limitedangle_eg}.	Typically, limited angle problems can occur due to having a large sample or because equipment does not allow the sample to be fully rotated. 
	Mathematically, microlocal analysis can be used to categorise the limited angle problem and characterise artefacts that occur. 	Viewed through the Fourier slice theorem, it becomes clear that the Fourier coefficients of $u$ are partitioned into those `visible' in $b$ and those contained in a `missing wedge' \cite{Quinto1993}. These coefficients are referred to respectively as the visible and invisible singularities of $u$. The limited angle problem then is both a denoising and inpainting inverse problem, on the visible and invisible singularities respectively. The artefacts caused by the missing wedge can be explicitly characterised \cite{Frikel2013a,Katsevich1997} and examples of such streak artefacts and blurred boundaries can be seen in Figures \ref{bad recons} and \ref{bad TV}.
	
	Whilst the techniques developed here can apply to any limited angle tomography problem, we focus on the application of TEM for specific examples.}
	
	\begin{figure}
		\includegraphics[width=.45\linewidth]{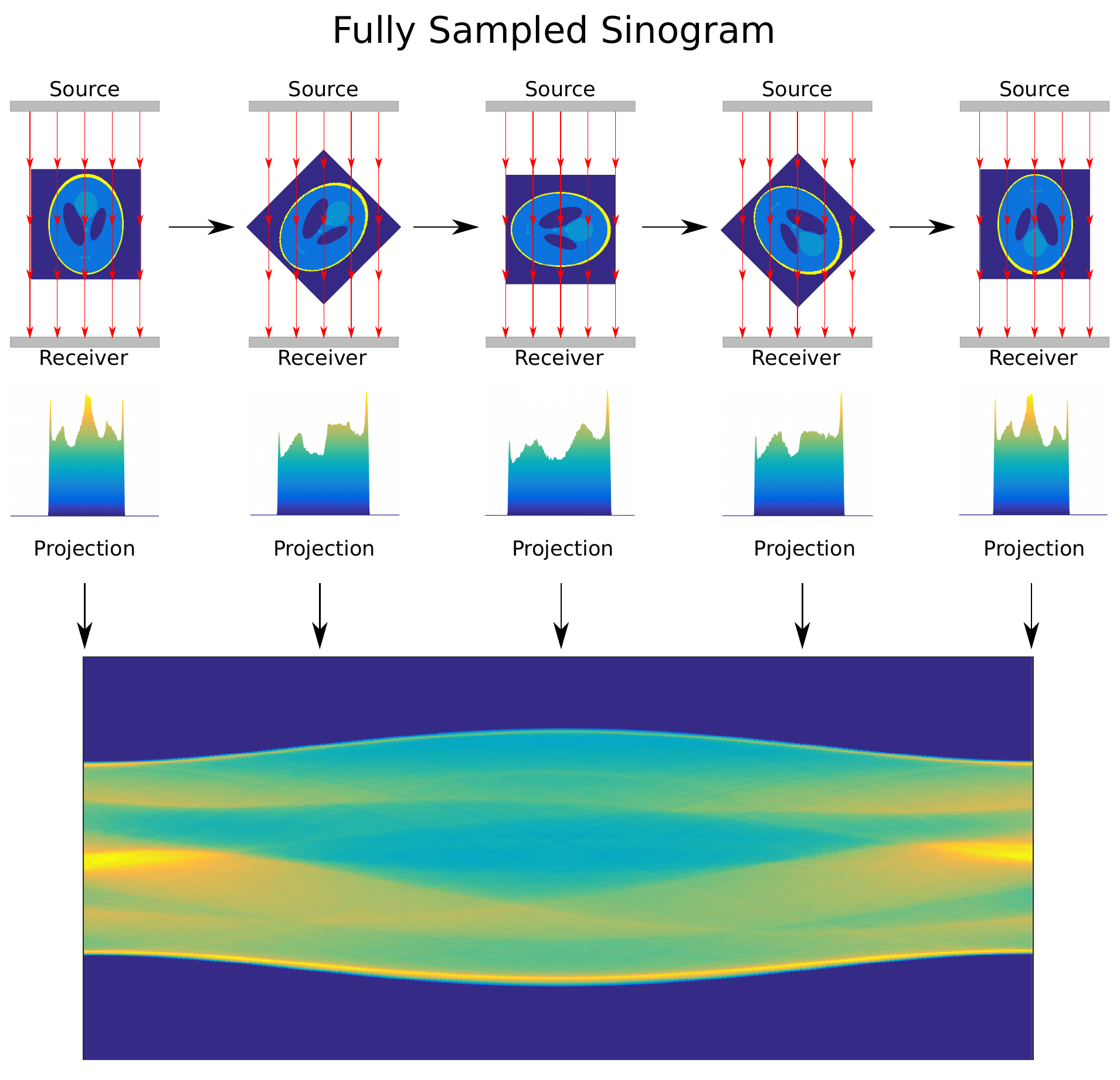}\hfill
		\includegraphics[width=.45\linewidth]{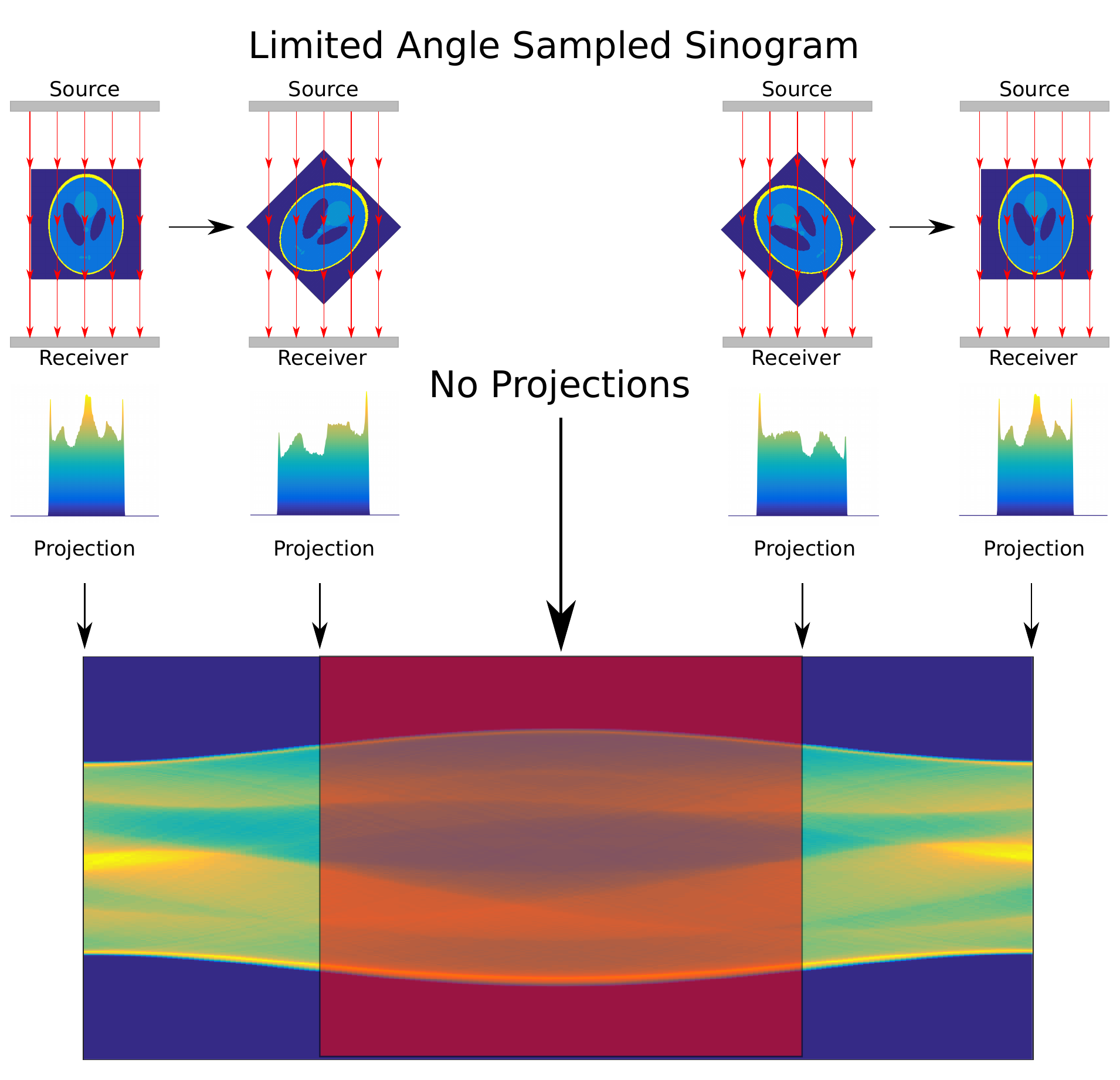}
		\caption{Diagrammatic representation of the acquisition of \newtext{2D} X-ray transform data \oldtext{in Electron Microscopy}\newtext{, the sinogram, }in both full range and limited angle acquisition. Note that measurement at $180^\circ$ is exactly a reflection of that at $0^\circ$. This symmetry allows us to consider a $180^\circ$ range of the sinogram as a full sample. In the limited angle setting we can only rotate the sample a small amount clockwise and anti-clockwise which results in missing data in the middle of the sinogram.}
		\label{limitedangle_eg}
	\end{figure}
	
	\subsection{Context and Proposed Model}
	\newtext{Traditional methods for X-ray Tomography reconstruction find approximate solutions to $S\C Ru = b$, constraining $\C Ru=v$ and only using prior knowledge of $u$ or the sinogram, $v$. There are three main methods which fit into this category:
	\begin{itemize}
		\item Filtered back projection (FBP) is a linear inversion method with smoothing on the sinogram, $v$, to account for noise \cite{Feldkamp1984,Flohr2003,Tang2006}.
		\item The Simultaneous Iterative Reconstruction Technique (SIRT) can be thought of as a preconditioned gradient descent on the function $\norm{S\C Ru-b}_2^2$ \cite{Gilbert1972,Kubel2005,Agulleiro2010,Spitzbarth2015}. Regularisation is then typically implemented by an early-stopping technique.
		\item Variational methods where prior knowledge is encoded in regularisation functionals have now been applied in this field for nearly a decade. In particular, the current state of the art in Electron Tomography is Total Variation (TV) regularisation \cite{Goris2012,Leary2013,Chen2013} where $u$ is encouraged to have a piecewise constant intensity. This will be introduced formally in Section \ref{prelims}.
	\end{itemize}

	FBP and SIRT are commonly used for their speed although variational methods like TV have quickly gained popularity as they enable prior physical knowledge to be explicitly incorporated in reconstructions. This added prior knowledge tends to stabilise the reconstruction process and Figure \ref{bad recons} gives examples where TV can vastly outperform FBP and SIRT when either the noise level is large or the angular range small. However, Figure \ref{bad TV} further shows the limitations of TV when high noise and limited angles are combined. The only difference between the Shepp-Logan phantom data shown in Figures \ref{bad recons} and \ref{bad TV} is that the former is clean data, in the image of the forward operator, whilst the latter has Gaussian white noise added. We see that as soon as there is a combined denoising/inpainting reconstruction problem, the TV prior on $u$ becomes insufficient to recover the structure of the sample.
	
	Recently, these traditional methods have received a revival through machine learning methods, see for instance \cite{Gu2017,Hammernik2017}. In both of these examples the main artefact reduction is a learned denoising step which only enforces prior knowledge on $u$.}

	\begin{figure}
		\includegraphics[width=\linewidth,trim={160pt 80pt 90pt 10pt},clip]{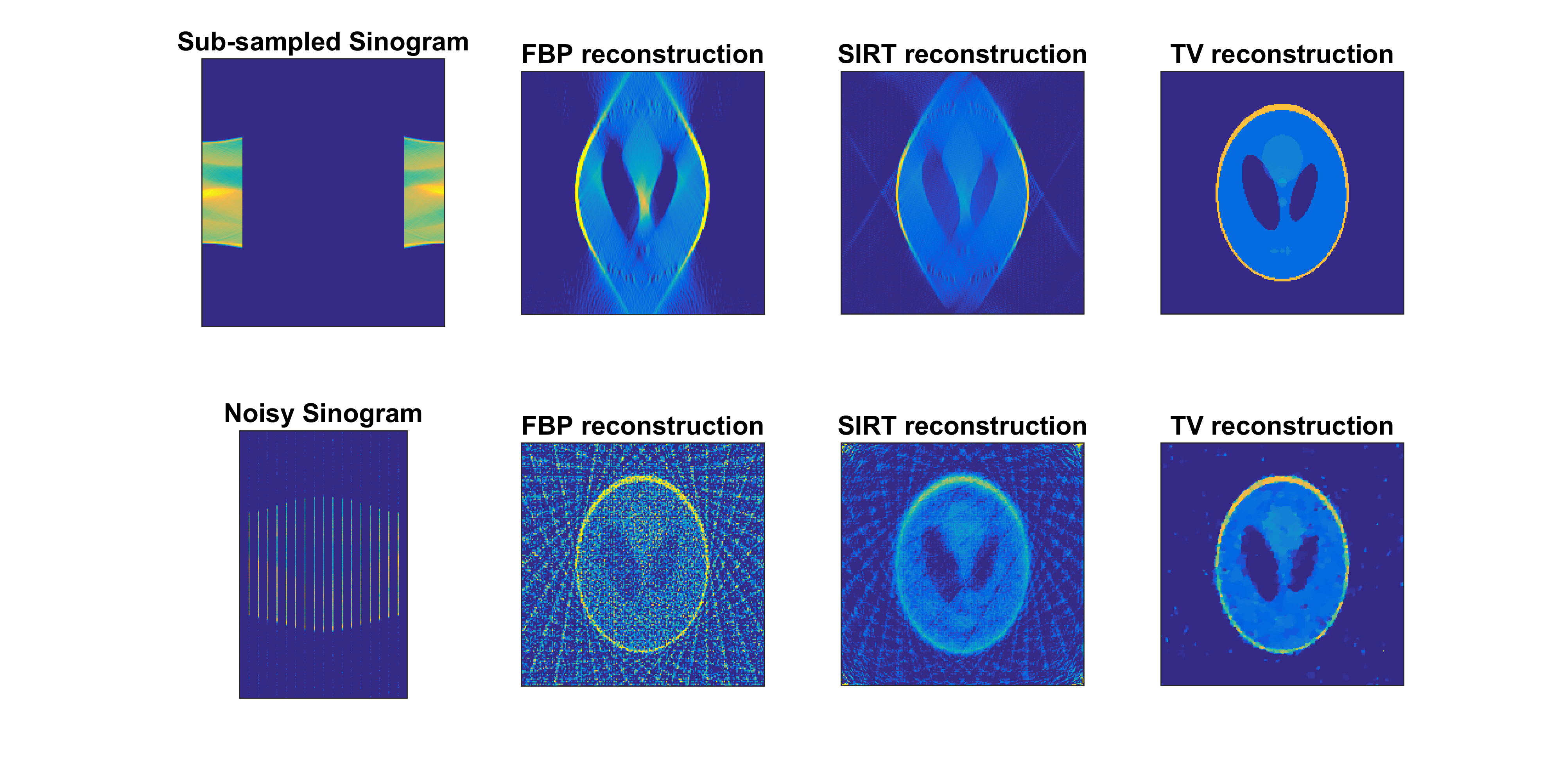}
		\caption{Demonstration of TV reconstruction in comparison to FBP and SIRT. The top row shows reconstructions from noise-less limited angle data and the bottom shows reconstructions from noisy limited view data (far left images). Comparing the columns, we immediately see that FBP and SIRT are much more prone to angular artefacts than TV. In both cases we notice that the TV reconstructions better show the broad structure of the phantom.}
		\label{bad recons}
	\end{figure}
	\begin{figure}
		\centering \includegraphics[width=.49\linewidth,trim={200 80 140 30},clip]{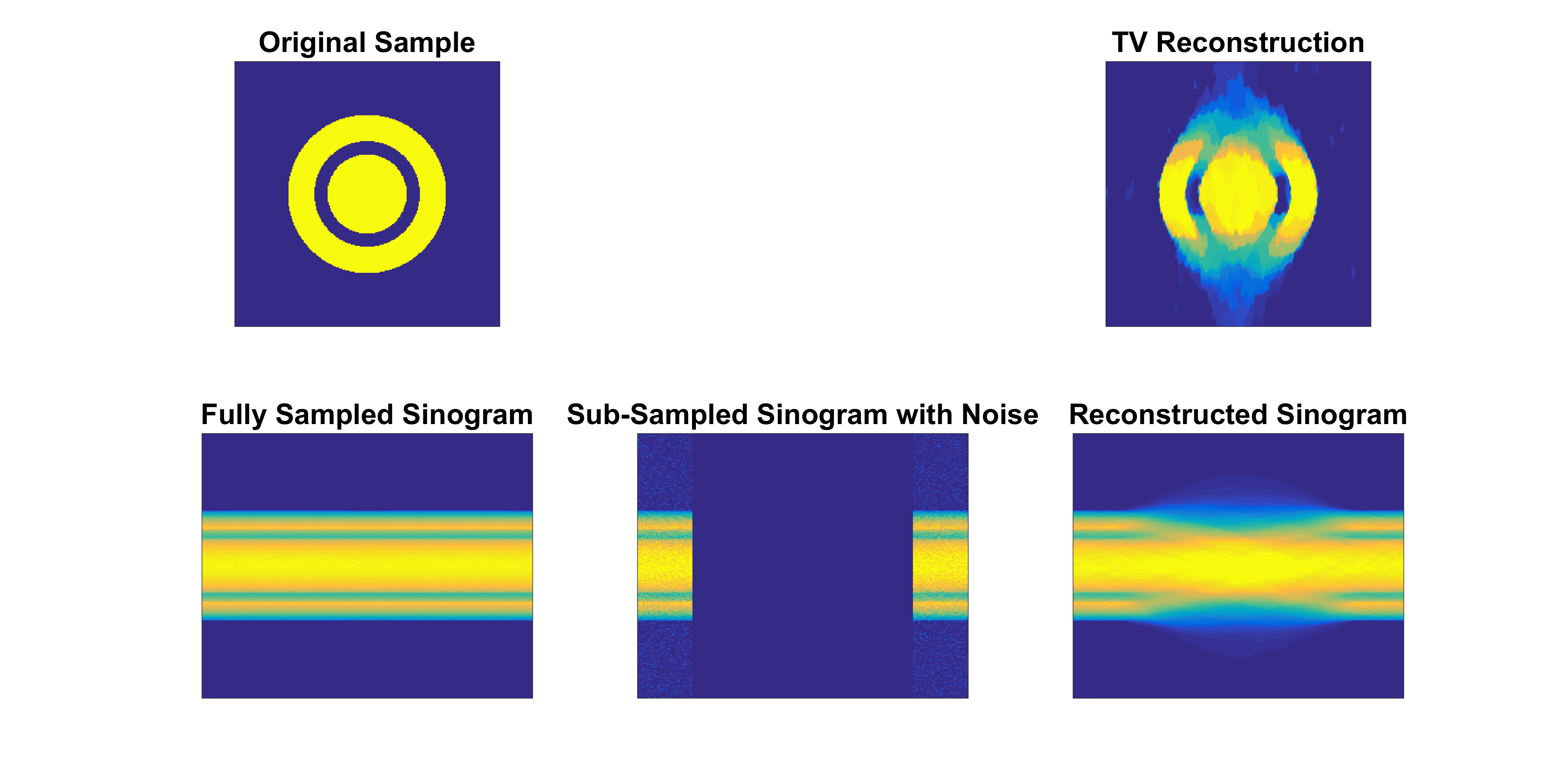}
		\includegraphics[width=.49\linewidth,trim={200 80 140 30},clip]{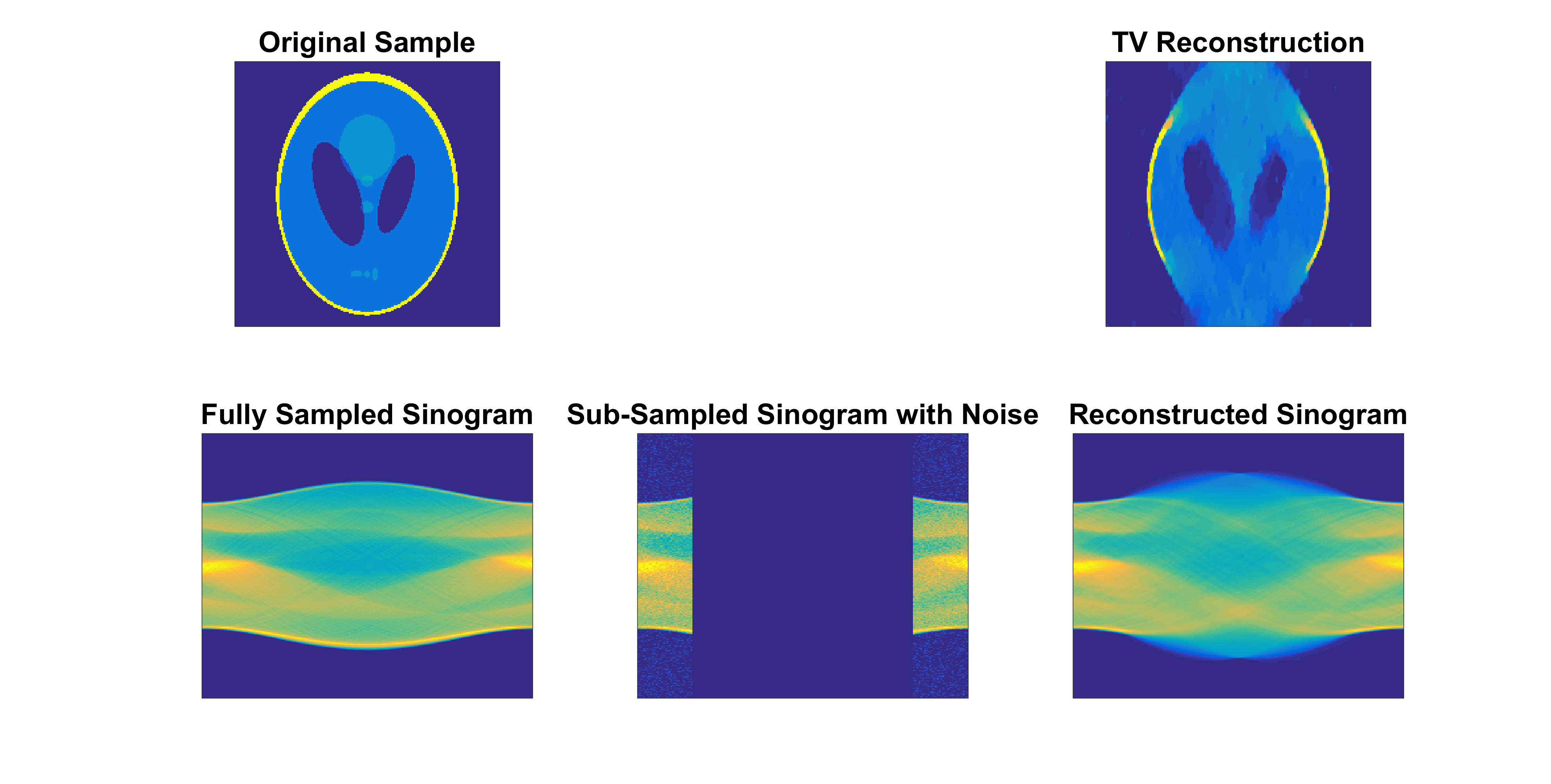}
		\caption{Examples when TV reconstructions cannot recover the global structures of samples. When there is a large missing wedge ($\sfrac23$ of data unseen) and noise on the projections then reconstructions exhibit characteristic blurring in the vertical direction. This can also be seen in the extrapolated region of the sinograms as a loss of structure.}
		\label{bad TV}
	\end{figure}
	
	The most common method that has been used to reconstruct pairs $(u,v)$ is to solve each inverse problem sequentially. Typically, we can express the pipeline for such methods as:
	\begin{align*}
	v&= \text{ optimal inpainted sinogram given $b$} 
	\\ u&= \text{ optimal reconstruction given $v$} 
	\end{align*}
	This has seen much success in heavy metal artefact reduction \cite{Kostler2006,Zhang2011} where a regularisation functional for the inpainting problem may be constructed from dictionary learning \cite{Li2014}, fractional order TV \cite{Zhang2011}, and Euler's Elastica \cite{Gu2006}. Euler's Elastica has also been used in the limited angle problem \cite{Zhang2017} along with more customised interpolation methods \cite{Kalke2014}. These approaches allow us to use prior knowledge on the sinogram to calculate $v$ and then spatial prior knowledge to calculate $u$ from $v$; at no point is the choice of $v$ influenced by our spatial prior. \newtext{A full joint approach allows us to go beyond this and use all of our prior knowledge to inform the choice of both $u$ and $v$. If our prior knowledge is consistent with the true data then this extra utilisation of our prior must have the potential to improve the recovery of both $u$ and $v$. To build a model for this framework we shall encode our } \newtext{In this paper, therefore, we propose a full joint approach which allows us to use all of our prior knowledge at once. To realise this idea we encode } prior knowledge and consistency terms into a single energy functional such that an optimal pair of reconstructions will minimise this joint functional, which we shall write as:
	\begin{equation}\label{basic model}
	 E(u,v) = \alpha_1d_1(\C Ru,v) + \alpha_2d_2(S\C Ru,b) + \alpha_3d_3(Sv,b) + \alpha_4r_1(u) +\alpha_5r_2(v)
	\end{equation}
	where $\alpha_i\geq0$ are weighting parameters, $d_i$ are appropriate distance functionals and $r_i$ are regularisation functionals which encode our prior knowledge. Note that choice of $d_2$ and $d_3$ are dictated by the data noise model. In what follows, $r_1$ is chosen to be the total variation. 
	
	\oldtext{Where we hope to improve is in our choice for $r_2$. It has previously been suggested that enhancing lines in a sinogram can also enhance edges in the density reconstruction, $u$ \cite{Thirion1991}. This aligns with our observations in Figure \ref{bad TV}. In both examples shown, the reconstructions show characteristic blurring at the top and bottom edge which is seen in the sinogram domain as a loss of structure outside of the given angular range. The idea that lines starting either side of the inpainting region should continue through inpainting region is exactly the assertion that samples should have some rotational symmetries or consistencies. The archetypal example of this is that of concentric rings; it is perfectly radially symmetric and thus its sinogram is a vertical stack of constant horizontal stripes. The Shepp-Logan phantom is more complex but all of the edges are constantly sharp around each boundary level set rather than becoming more or less blurred at particular angles. This type of symmetry can still be seen in the sinogram, for instance the sharp yellow curve at the bottom the sinogram which traces the outer yellow ellipse in the phantom. The theoretical observations of \cite{Thirion1991} with these visual findings has led us to choose $r_2$ to encourage sharp line structures in $v$. The exact form of this will be formalised in Section \ref{prelims}.
	We shall demonstrate that our joint model, \eqref{basic model}, has two main advantages over the other methods mentioned here. The first is that by including these weights $\alpha_i$ we can enforce a more adaptive consistency between each $u$, $v$ and $b$. For instance, if we let $\alpha_2,\alpha_4\to\infty$ then we recover the TV reconstruction model. Alternatively, if we let $\alpha_3,\alpha_5\to\infty$ then we recover a method which performs a single inpainting step and then calculates $u$ from $v$ and $b$ as in [23,?,?].}
	
	\newtext{Our choice for $r_2$, the sinogram regularisation, is based on theoretical and heuristic observations. Thirion \cite{Thirion1991} has shown that discontinuities in $u$ correspond to sharp edges in the sinogram. In Figure \ref{bad TV} we also see that blurred reconstructions correspond to loss of structure in the sinogram. Therefore, $r_2$ will be chosen to detect sharp features in the given data and preserve these through the inpainting region. The exact form of $r_2$ will be formalised in Section \ref{The Joint Model}.
	
	A typical advantage of joint models is that they generalise previous ones. For instance, if we let $\alpha_2,\alpha_4\to\infty$ then we recover the TV reconstruction model. Alternatively, if we let $\alpha_3,\alpha_5\to\infty$ then we recover a method which performs the inpainting and then the reconstruction sequentially, as in \cite{Zhang2011,Gu2006,Zhang2017}. Recent work \cite{Burger2014} has shown that such a joint approach can be advantageous in similar situations but closest to our approach is that of \cite{Dong2013} where $r_1$ and $r_2$ were chosen to encode wavelet sparsity in both $u$ and $v$. 
	We shall demonstrate that the flexibility of our joint model, \eqref{basic model}, can allow for a better adaptive fitting to the data.}

	\begin{figure}
		\centering \includegraphics[width=.25\linewidth,trim={140 400 770 20},clip]{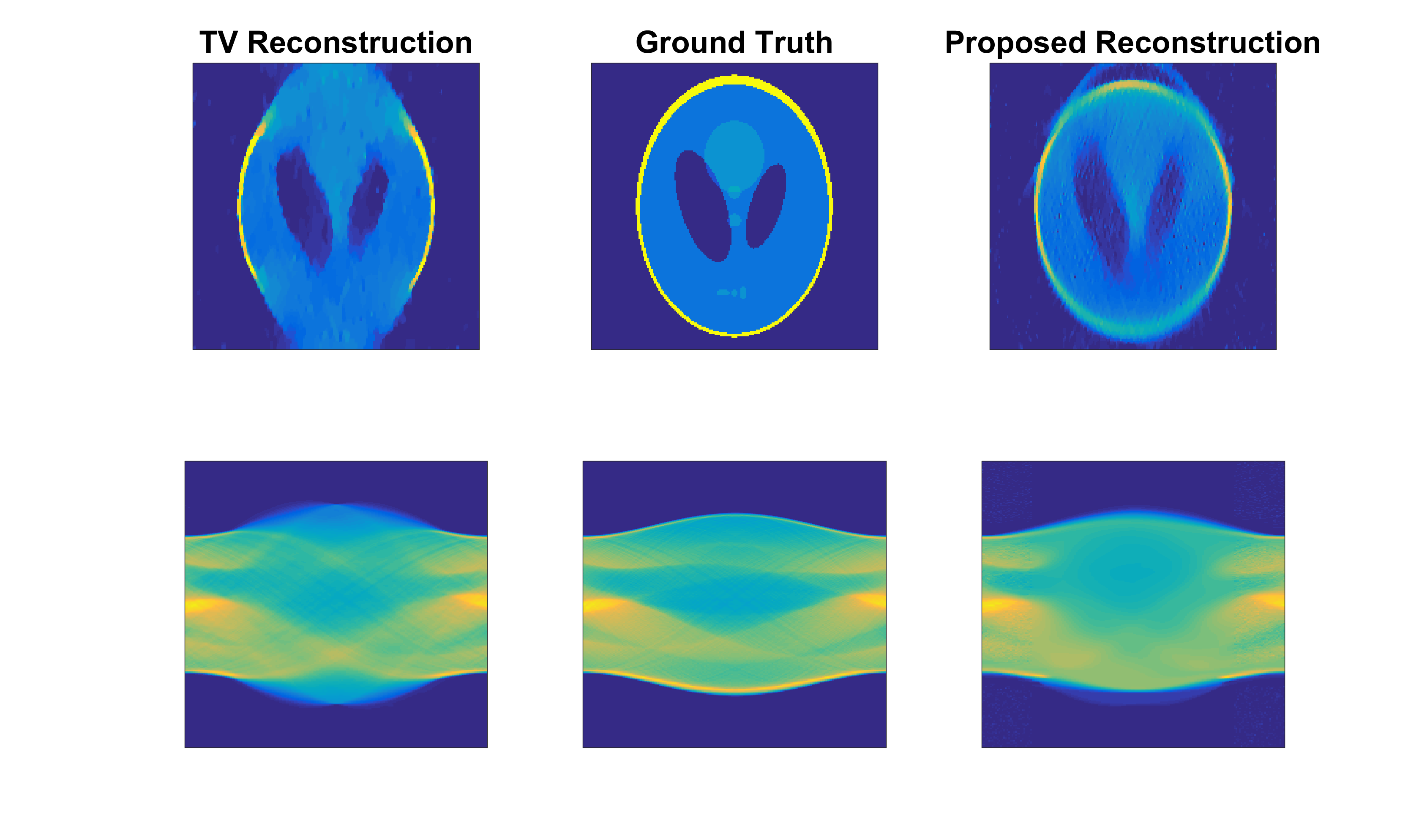}
		\includegraphics[width=.31\linewidth,trim={780 400 55 20},clip]{phantom_res}
		\\\centering \includegraphics[width=.25\linewidth,trim={140 80 770 420},clip]{phantom_res}
		\includegraphics[width=.31\linewidth,trim={780 80 55 420},clip]{phantom_res}
		\caption{Demonstration of the improvement which can be achieved by using a model as in \eqref{basic model}. \oldtext{Top } \newtext{Left hand } images show state of the art reconstructions using Total Variational regularisation ($\alpha_1=\alpha_3=\alpha_5=0$). This reconstruction clearly shows the characteristic blurring artefacts at the top and bottom. In our proposed joint reconstruction (\oldtext{bottom line} \newtext{right hand}) we can minimise these effects.}
		\label{TV vs joint}
	\end{figure}

	\subsection{Overview and Contributions}
	The main contribution of this work is to provide a framework for building models of the form described in \eqref{basic model} and provide new proofs for	a numerical scheme for minimising these functionals \oldtext{, even when these problems become non-smooth and non-convex}. \newtext{This numerical scheme is valid for a very large class of non-smooth and non-convex functionals $r_i$ and thus could be used in many other applications.}
	
	Section \ref{prelims} first outlines the necessary concepts and notation needed to formalise the statement of our specific joint model in Section \ref{The Joint Model}. It will become apparent that the main numerical requirements of this work will require minimising a functional which is neither convex or smooth. Section \ref{optimisation} will start by reviewing recent work from Ochs et. al. \cite{Ochs2017} and we then provide alternative concise and self-contained proofs. Our main contribution here is to extend the existing results to an alternating (block) descent scenario. Finally, we present numerical results including two synthetic phantoms and experimental Electron Microscopy data where the limited angle situation occurs naturally.
	
	\section{Preliminaries}\label{prelims}
	\subsection{The X-ray Transform}
	The principal forward model for X-ray tomography is provided by the X-ray transform which can be defined for any bounded domain, $\Omega\subset\F R^n$, by
	\begin{equation}\label{radon}
	\C R\colon L^1(\Omega,\F R)\to L^1(\C S^{n-1}\times \F R^n,\F R) \text{ such that } \C Ru(\theta,y) = \int_{x = y + t\theta, t\in \F R} u(x)dt
	\end{equation}
	where $\C S^{n-1}=\{s\in\F R^n\st |s| = 1\}$. \oldtext{In most applications we have $n=3$ although in many cases (e.g. EM, X-ray CT) the scanners proceed slice-wise and so we have 
	$ \C R\colon L^1(\Omega,\F R)\to L^1(\C S^{1}\times \F R^3,\F R) \text{ such that } \C Ru(\theta,y) = \int_{t\in\F R} u(y_1+t\theta_1,y_2+t\theta_2,y_3)dt$
	Consequently, we shall provide our numerical examples in 2D and this can be extended to 3D by a trivial extension of our proposed model.} \newtext{In this work, for simplicity, we will only be using $n=2$ although the case $n=3$ is completely analogous.}
	
	\subsection{Total Variation Regularisation}
	Total Variation (TV) regularisation is extremely common across many fields of image processing \cite{Beck2009,Chan2006,Benning2017}. The definition of the (isotropic) TV semi-norm on domains $\Omega\subset\F R^n$ is stated as
	\begin{multline}\label{TV}
	g\in L^1(\Omega,\F R) \implies \TV(g) = \sup\left\{ \int \IP{g(x)}{\op{div}(\varphi)(x)}dx\right. \\\left.\vphantom{\int}\st \varphi\in C_c^\infty(\Omega,\F R^n), |\varphi(x)|_2\leq 1\text{ for all } x\right\}
	\end{multline}
	The intuition behind this is that when $g\in W^{1,1}(\Omega,\F R)$ then we have exactly \[\TV(g) = \norm{\nabla g}_{2,1} = \int |\nabla g(x)|_2dx\]
	From this point forward we shall write the $\norm{\cdot}_{2,1}$ representation where $|\nabla g|$ is to be understood as a measure if necessary.
	We shall denote the space of Bounded Variation as \[\op{BV}(\Omega, \F R) = \{ g\in L^1\st \TV(g) < \infty\} \]
	One property that we shall use about the space of Bounded Variation is $\op{BV}\subset L^2\subset L^1$ which holds whenever $\Omega$ is compact by the Poincar\'e inequality: $\norm{g-\sfrac{\int g}{\int 1}}_2 \lesssim \op{TV}(g)$.
	
	The most common way to enforce this prior in reconstruction or inpainting problems is generalised Tikhonov regularisation, which gives us the basic reconstruction method \cite{Goris2012,Leary2013}.
	\begin{equation}
	u = \argmin_{u\geq 0} \frac12\norm{S\C Ru-b}_2^2 + \lambda\TV(u) \text{ for some } \lambda\geq0
	\end{equation}
	The parameter $\lambda$ is a regularisation parameter, which allows to enforce more or less regularity, depending on the quality of the data $b$.

	\subsection{Directional Total Variation Regularisation}
	For our sinogram regularisation functional we shall use a directionally weighted TV penalty, motivated by the TV diffusion model developed by Joachim Weickert \cite{Weickert1998} for various imaging techniques including denoising, inpainting and compression. Such an approach has already shown great ability for enhancing edges in noisy or blurred images, and preserves line structures across inpainting regions \cite{Berkels2006,Estellers2015,Bertalmio2000}. The heuristic for our regularisation on the sinogram was described in Figure \ref{bad TV} and we shall encode it in an anisotropic TV penalty which shall be written as 
	\[\DTV(v) = \int |A(x)\nabla v(x)| dx = \norm{A\nabla v}_{2,1} \text{ for some anisotropic } A\colon\F R^2\to \F R^{2\times 2}.\]
	The power of such a weighted extension of TV is that once a line is detected, either known beforehand or detected adaptively, we can embed this in $A$ and enhance or sharpen that line structure in the final result. The general form that we choose for $A$ is 
	\begin{equation}\label{matrix form}
	\begin{split}
	A(x) = &\;c_1(x)\vec e_1(x)\vec e_1(x)^T + c_2(x)\vec e_2(x)\vec e_2(x)^T \\&\text{ such that } \vec e_i\colon \F R^2\to \F R^2, |\vec e_i(x)| = 1, \IP{\vec e_1(x)}{\vec e_2(x)} = 0
	\end{split}
	\end{equation}
	i.e. \[\DTV(v) = \int \sqrt{c_1^2|\IP{\vec e_1}{\nabla v}|^2 + c_2^2|\IP{\vec e_2}{\nabla v}|^2} dx.\]
	Examples of this are presented in Figure \ref{DTV inpainting}. Note that the choice $c_1=c_2$ recovers the traditional TV regulariser but for $|c_1|\ll c_2$ we allow for much larger (sparse) gradients in the direction of $\vec e_1$. This allows for large jumps in the direction of $\vec e_1$ whilst maintaining flatness in the direction of $\vec e_2$. In order to generate these parameters we follow the construction of Weickert \cite{Weickert1998}. Given a noisy image, $d$, we can construct the structure tensor:
	\[(\nabla d_\rho \nabla d_\rho^T)_\sigma(x) = \lambda_1(x)\vec e_1(x)\vec e_1(x)^T + \lambda_2(x)\vec e_2(x)\vec e_2(x)^T \text{  such that } \lambda_1(x)\geq \lambda_2(x)\geq0\]
	where \[d_\rho(x) = [d\star \exp(-\sfrac{|\cdot|^2}{2\rho^2})] (x)= \int \exp(-\sfrac{|y-x|^2}{2\rho^2})d(y) \text{ etc.}\]
	denotes convolution with the heat kernel. This eigenvalue decomposition is typically very informative in constructing $A$. If we define
	\[ \Delta(x) = \lambda_1(x)-\lambda_2(x) \text{ is coherence} \qquad \Sigma(x) = \lambda_1(x)+\lambda_2(x) \text{ is energy}\]
	then the eigenvectors give the alignment of edges and $\Delta, \Sigma$ characterise the local behaviour, as in Figure \ref{energy and coherence}. In particular, we simplify the model to
	\begin{equation}\label{direction tensor}
	A_d(x) \coloneqq c_1(x|\Delta,\Sigma)\vec e_1(x)\vec e_1(x)^T + c_2(x|\Delta,\Sigma)\vec e_2(x)\vec e_2(x)^T
	\end{equation}
	where the only parameters left to choose are $c_i$. Typical examples of include 
	\[c_1 = \frac{1}{\sqrt{1+\Sigma^2}},\qquad c_2 = 1\]
	\[c_1 = \epsilon,\qquad c_2 = \epsilon+\exp\left(-\sfrac{1}{\Delta^2}\right) \text{ for some } \epsilon>0\]
	The key idea here is that $c_1\ll c_2$ near edges and $c_1=c_2$ on flat regions. In practice $d$ will also be an optimisation parameter and so we shall require a regularity result on our choice of $d\mapsto A_d$, now characterised uniquely by our choice of $c_i$.
	\begin{figure}
		\centering
		\includegraphics[width=.45\linewidth,trim={50 75 50 20},clip]{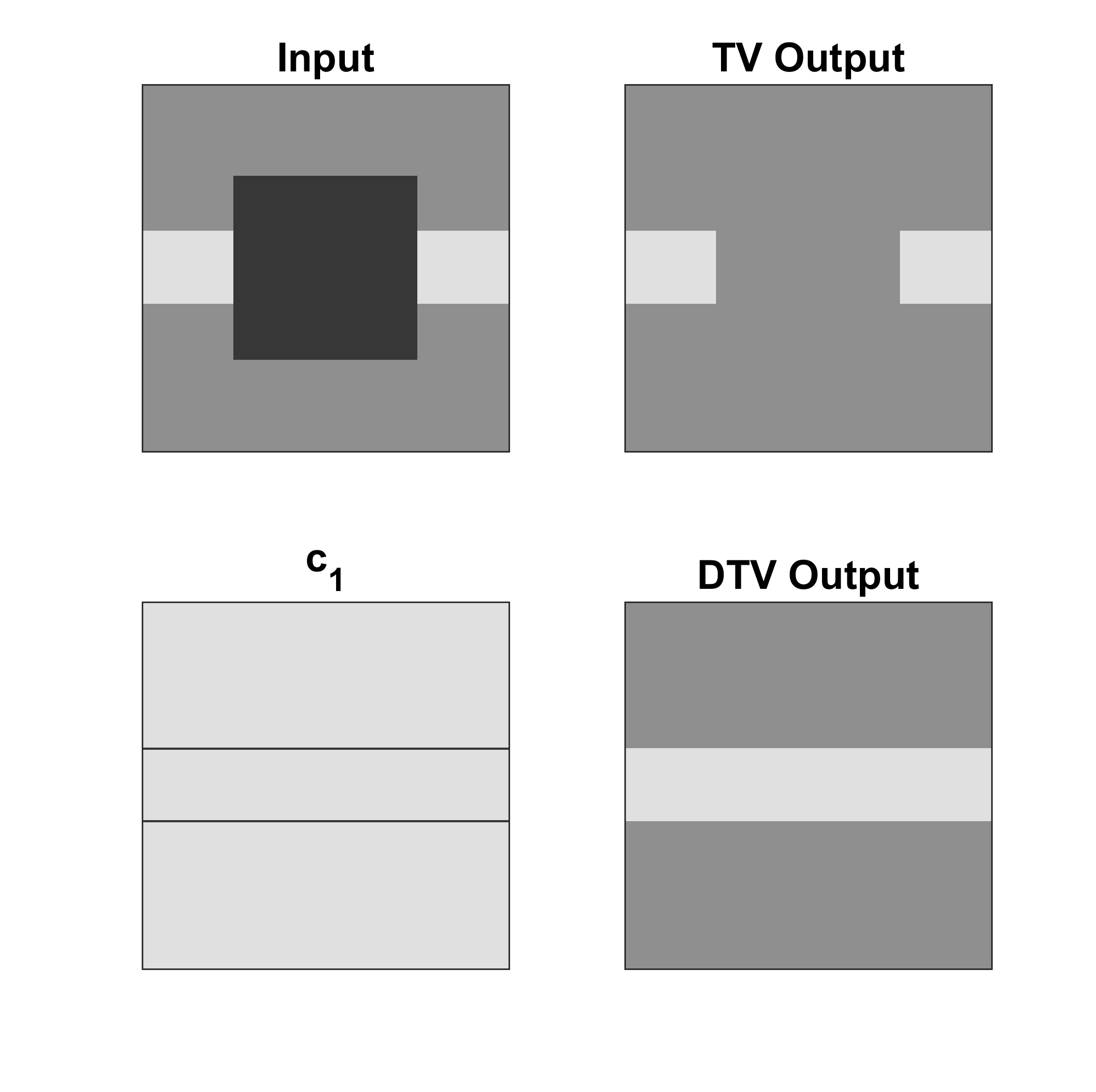}\hfill
		\includegraphics[width=.45\linewidth,trim={50 75 50 20},clip]{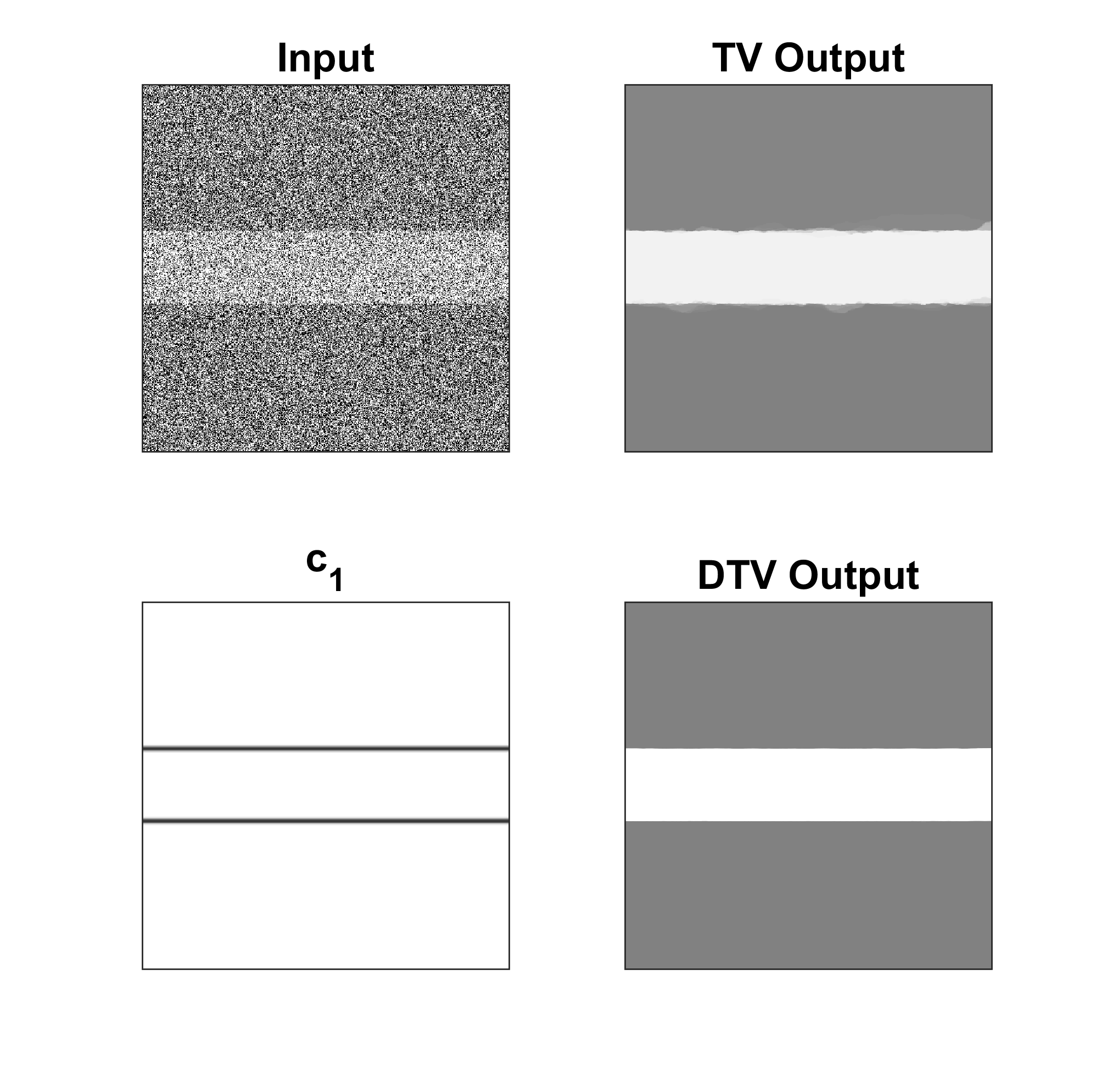}
		
		\caption{Examples comparing TV with directional TV for inpainting and denoising. Both examples have the same edge structure and so parameters in \eqref{matrix form} are the same in both. DTV uses $c_2=1$ and $c_1$ as the indicator (0 or 1) shown in the bottom left plot, TV is the case $c_1=c_2=1$. Left block: Top left image is inpainting input where the dark square shows the inpainting region. The structure of $c_1$ allows DTV (bottom right) to connect lines over arbitrary distances, whereas TV inpainting (top right) will fail to connect the lines if the horizontal distance is greater than the vertical separation of the edges. Right block: Top left image is denoising input. DTV has two advantages. Firstly, the structure of $c_1$ recovers a much straighter line than that in the TV reconstruction. Secondly, TV penalises jumps equally in each direction and so the contrast is reduced, DTV is able to penalise noise oscillations independently from edge discontinuities which allows us to maintain much higher contrast.}\label{DTV inpainting}
	\end{figure}
	\begin{figure}
		\centering\includegraphics[width=.7\textwidth,trim={0 80 0 70},clip]{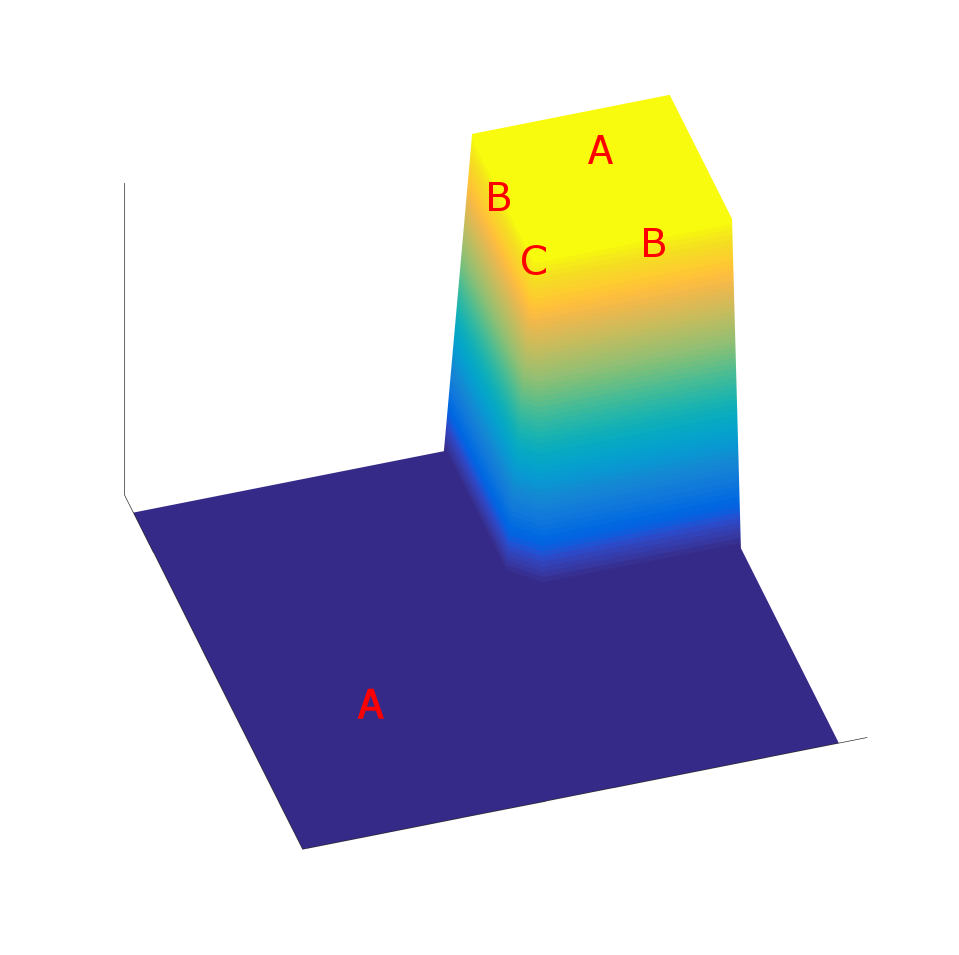}
		\caption{Surface representing a characteristic image, $d$, to demonstrate the behaviour of $\Sigma$ and $\Delta$. Away from edges (A) we have $\Sigma\approx\Delta\approx 0$. On simple edges (B) we have $\Sigma\approx \Delta \gg  0$ and, finally, at corners (C) we have $\Sigma\gg \Delta$.}\label{energy and coherence}
	\end{figure}
	\begin{theorem}\label{smooth tensor}
		If
		\begin{enumerate}
			\item $c_i$ are $2k$ times continuously differentiable in $\Delta$ and $\Sigma$, $k\geq1$
			\item $c_1(x|0,\Sigma) = c_2(x|0,\Sigma)$ for all $x$ and $\Sigma\geq0$
			\item $\partial_\Delta^{2j-1} c_1(x|0,\Sigma) = \partial_\Delta^{2j-1} c_2(x|0,\Sigma) = 0$ for all $x$ and $\Sigma\geq0, j=1\ldots,k$
		\end{enumerate}
		Then $A_d$ is $C^{2k-1}$ with respect to $d$ for all $\rho>0,\sigma\geq0$.
	\end{theorem}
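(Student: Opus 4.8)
The plan is to trace the chain of maps $d \mapsto (\lambda_1,\lambda_2,\vec e_1,\vec e_2) \mapsto A_d$ and control the regularity of each link, the only delicate point being the eigen-decomposition where $\lambda_1=\lambda_2$. First I would observe that $d \mapsto \nabla d_\rho$ and $d \mapsto (\nabla d_\rho \nabla d_\rho^T)_\sigma$ are bounded \emph{linear} maps (convolution with a fixed smooth kernel, then an outer product which is a fixed quadratic, then another convolution), hence $C^\infty$ in $d$ in any reasonable topology, with values in the space of symmetric $2\times 2$ matrix fields depending smoothly on $x$; call this field $M = M(x)$ with entries that are smooth functions of $d$. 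So the whole question reduces to: how regularly does the map $M \mapsto A_d$, built out of the eigenstructure of $M$ and the functions $c_i$, depend on $M$? Since $\rho>0$ the kernel is genuinely smoothing, so no issue there; the value $\sigma=0$ is allowed because the outer product $\nabla d_\rho\nabla d_\rho^T$ is already continuous.

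The heart of the matter is that $\vec e_1,\vec e_2$ individually are \emph{not} smooth functions of a symmetric matrix near a repeated eigenvalue — only the spectral projections, or symmetric functions of the eigenvalues, are. So I would not try to differentiate $\vec e_i$ directly. Instead, rewrite
\[
A_d = c_2\, I + (c_1 - c_2)\, \vec e_1 \vec e_1^T,
\]
using $\vec e_1\vec e_1^T + \vec e_2\vec e_2^T = I$. Now $c_2 = c_2(x|\Delta,\Sigma)$ depends only on $\Delta = \lambda_1-\lambda_2$ and $\Sigma = \lambda_1+\lambda_2$, which are smooth symmetric functions of $M$ (indeed $\Sigma = \op{tr}M$ and $\Delta = \sqrt{(\op{tr}M)^2 - 4\det M}$, the latter being $|\lambda_1-\lambda_2|\ge 0$); the square root is the source of the non-smoothness, and this is exactly why hypotheses (2)--(3) are needed. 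For the second term, let $P_1 = \vec e_1\vec e_1^T$ be the projection onto the top eigenspace: away from $\{\Delta = 0\}$ it is real-analytic in $M$ by the usual resolvent/Riesz-projection formula, and $(c_1-c_2)$ is smooth there, so $A_d$ is as smooth as the $c_i$ away from $\Delta=0$. The only work is to show the combination $(c_1-c_2)P_1$ extends to a $C^{2k-1}$ function across $\Delta=0$.

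To do this I would treat $\Delta$ as the near-zero coordinate. Hypothesis (2) gives $(c_1-c_2)|_{\Delta=0}=0$ and hypothesis (3) kills the first $k$ odd $\Delta$-derivatives at $\Delta=0$; combined with hypothesis (1) (that $c_i$ are $2k$ times differentiable in $(\Delta,\Sigma)$), a Taylor expansion in $\Delta$ shows $f(\Delta,\Sigma,x) := c_1 - c_2$ is a $C^{2k-1}$ function of $(\Delta^2,\Sigma,x)$ — i.e. $f = \Delta^2\, g$ with $g$ a $C^{2k-1}$ function of $(\Delta^2,\Sigma,x)$, hence (since $\Delta^2 = (\op{tr}M)^2 - 4\det M$ is a polynomial in $M$) a $C^{2k-1}$ function of $M$ and $x$. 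Meanwhile $\Delta^2 P_1$ is real-analytic in $M$ even across the coincidence locus: writing $M = \tfrac{\Sigma}{2}I + \tfrac{\Delta}{2}Q$ with $Q$ the traceless unit-norm part, one has $P_1 = \tfrac12 I + \tfrac12 Q$ where $\Delta Q = 2M - (\op{tr}M)I$ is linear in $M$, so $\Delta^2 P_1 = \tfrac12\Delta^2 I + \tfrac12 \Delta\cdot(\Delta Q)$; the term $\Delta^2 I$ is polynomial and $\Delta\cdot\Delta Q$ needs one more factor of $\Delta$, which we borrow from $g$ — more cleanly, absorb it: $(c_1-c_2)P_1 = \Delta^2 g\, P_1 = g\cdot(\Delta^2 P_1)$ is fine once we note $\Delta^2 P_1$ itself is smooth, since $\Delta^2 P_1 = \Delta^2\cdot\tfrac12(I+Q) = \tfrac12\Delta^2 I + \tfrac12\Delta(2M-(\op{tr}M)I)$ and here the surviving single power of $\Delta$ multiplies a quantity that vanishes precisely on $\Delta=0$ at the same rate, so the product is $(2M-(\op{tr}M)I)^{\otimes}$-type and in fact $\Delta^2 P_1$ equals a polynomial in $M$ (one checks $\Delta^2 P_1 = \Delta\, M - \Delta\lambda_2 I$ and re-squares). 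The robust statement I would actually prove is: $P_1$ is real-analytic in $M$ on $\{\Delta>0\}$ and $\Delta^{2m}P_1$ extends real-analytically across $\{\Delta=0\}$ for every $m\ge 1$ (immediate from $\Delta^2 = (\op{tr}M)^2-4\det M$ and $\Delta^2 P_1$ expressible via $M$, $\op{tr}M$, $\det M$). Pairing $f = \Delta^2 g$ (with $g\in C^{2k-1}$) against $P_1$ then gives $(c_1-c_2)P_1 = g\cdot(\Delta^2 P_1) \in C^{2k-1}$, and adding back $c_2 I \in C^{2k}\subset C^{2k-1}$ finishes it. Composing with the smooth map $d\mapsto M$ yields $A_d\in C^{2k-1}$ in $d$, uniformly in $x$, for all $\rho>0$, $\sigma\ge0$.

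The main obstacle is precisely this gluing across the repeated-eigenvalue set: one must resist differentiating the eigenvectors and instead package everything into $\Delta^2$ and the spectral projection, then verify that hypotheses (2)--(3) supply exactly the vanishing needed so that the odd-in-$\Delta$ (hence genuinely non-smooth, since $\Delta = \sqrt{\,\cdot\,}$) part of $c_1-c_2$ is absent up to order $2k-1$. Everything else — the smoothing by $\rho>0$, the admissibility of $\sigma=0$, the linearity of the structure-tensor construction — is routine.
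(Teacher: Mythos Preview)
Your outline parallels the paper's: both factor through the smooth quadratic map $d\mapsto M_d=(\nabla d_\rho\nabla d_\rho^T)_\sigma$ and isolate $\{\Delta=0\}$ as the only obstruction. The paper, however, does not use your projection packaging $A=c_2I+(c_1-c_2)P_1$. It writes the $2\times2$ eigendecomposition out explicitly, treats $\Delta>0$ by a case split on which closed eigenvector formula has non-vanishing denominator, and at a point with $\Delta=0$ computes the increment $A(M+\epsilon)-A(M)$ directly, using $c_i=c_1(0,\Sigma)+O(\Delta^2)$ together with the purely algebraic bound $\Delta^2=(\epsilon_{11}-\epsilon_{22})^2+4\epsilon_{12}^2=O(\|\epsilon\|^2)$. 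The higher-order part~(iv) is then a one-line Taylor expansion of $c_i$ to order $2k$.

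There is a genuine gap in your argument: the ``robust statement'' that $\Delta^{2m}P_1$ extends real-analytically across $\{\Delta=0\}$ for $m\geq 1$ is false. With $P_1=\tfrac12(I+Q)$ and $\Delta Q=2M-(\op{tr}M)I$ (linear in $M$, as you correctly observe), one has
\[
\Delta^{2m}P_1=\tfrac12\,\Delta^{2m}I+\tfrac12\,\Delta^{2m-1}\bigl(2M-(\op{tr}M)I\bigr),
\]
and the odd power $\Delta^{2m-1}$ survives for every $m$. Along $M(s)=I+s\,\mathrm{diag}(1,-1)$ one gets $\Delta=2|s|$ and $2M-(\op{tr}M)I=2s\,\mathrm{diag}(1,-1)$, so the second term is a multiple of $s|s|^{2m-1}$, which is $C^{2m-1}$ but not $C^{2m}$ and certainly not real-analytic. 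Hence you cannot write $(c_1-c_2)P_1=g\cdot(\Delta^2P_1)$ with both factors smooth in $M$; the ``borrow a factor of $\Delta$ from $g$'' manoeuvre that you raise and then set aside is in fact where the work lies. The incidental claim $c_2I\in C^{2k}$ is also unjustified as stated: hypothesis~(iii) on $c_2$ is needed even for this scalar piece, and evenness in $\Delta$ yields at best $C^{2k-1}$. If you want to keep your coordinate-free route, the usable identity is $A=\tfrac12(c_1+c_2)I+\tfrac{c_1-c_2}{2\Delta}(2M-(\op{tr}M)I)$, after which the entire burden is on the \emph{scalar} $(c_1-c_2)/\Delta$ as a function of $M$; this is what the paper's increment calculation is effectively controlling, and you should carry out that estimate rather than rely on smoothness of $\Delta^{2m}P_1$.
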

	\begin{remark}\hfill
		\begin{itemize}
			\item Property (ii) is necessary for $A_d$ to be well defined and continuous for all fixed $d$
			\item If we can write $c_i = c_i(\Delta^2,\Sigma)$ then property (iii) holds trivially
		\end{itemize}
	\end{remark}
	\noindent The proof of this theorem is contained in \ref{App: smooth tensor}.
	
	\section{The Joint Model}\label{The Joint Model}
	Now that all of the notation and concepts have been defined we can formalise the statement of our particular joint model of the form in \eqref{basic model}:
	\begin{itemize}
		\item The forward operator, $\C R$, is the X-ray transform from \eqref{radon}
		\item The desired reconstructed sample, $u\in \op{BV}(\Omega,\F R)$ on some domain $\Omega$
		\item The noisy sub-sampled data, $b\in L^1(\Omega',\F R)$ on some $\Omega'\Subset \C S^1\times\F R_{\geq0}$. We extend such that $b|_{\Omega'^c}=0$ for notational convenience.
		\item The full reconstructed sinogram, $v\in L^1(\C S^1\times\F R_{\geq0},\F R)$
	\end{itemize}
	We also define $S = S_{\Omega'}$ to be the indicator function on $\Omega'$. The joint model that we thus propose is 
	\begin{equation}\label{model}\begin{split}
	(u, v) = \argmin_{u\geq 0} E(u,v) = & \;\newtext{\argmin_{u\geq0}}\frac{1}{2}\norm{\C Ru-v}^2_{\alpha_1} + \frac{\alpha_2}{2}\norm{S\C Ru-b}_2^2
	\\ &\qquad\quad+ \frac{\alpha_3}{2}\norm{Sv-b}_2^2 + \beta_1\op{TV}(u) + \beta_2\DTV_{\C Ru}(v)
	\end{split}\end{equation}
	where \newtext{\[\DTV_{\C Ru}(v) = \norm{A_{\C Ru}\nabla v}_{2,1}\]}
	and $\alpha_i,\beta_i>0$ are weighting parameters\newtext{, $A_{Ru}$ as defined in \eqref{direction tensor}}.	$\alpha_1$ is embedded in the norm as it is a spatially varying weight, taking different values inside and outside of $\Omega'$. We note that the norms involving $b$ are determined by the noise model of the acquisition process, in this case Gaussian noise. The final metric pairing $\C Ru$ and $v$ was free to be chosen to promote any structural similarity between the two quantities. We have chosen the squared $L^2$ norm for simplicity although if some structure is known to be important then there is a wide choice of specialised functions from which to choose (e.g. \cite{Ehrhardt2015}). 
	
	\newtext{The choice of regularisation functionals reflects prior assumptions on the expected type of sample; all of the examples shown later will follow these assumptions. The isotropic TV penalty is chosen as $u$ is expected to be piece-wise constant. This will reduce oscillations from $u$ and favour `stair-case' like changes of intensity over smooth ones. The assumptions of our regularisation on $v$ must also be derived from expected properties of $u$. What is known from \cite{Thirion1991}, and can be seen in Figure \ref{bad TV}, is that discontinuities of $u$ along curves in the spatial domain, say $\gamma$, generate a so called `dual curve' in the sinogram. $\C Ru$ will also have an edge, although possibly not a discontinuity, along this dual curve. Thus, perpendicular to the dual curve $v$ should have sharp features and parallel to the dual curve intensity should vary slowly. The assumption of our regularisation is that if a dual curve is present in the visible component of the data then it should correspond to some $\gamma$ in the spatial domain. The extrapolation of this dual curve must behave like the boundary of a level-set of $u$, i.e. preserve the sharp edge and slow varying intensities in $v$. The main influence of this regularisation is in the inpainting region and so any artefacts it introduces should also only effect edges corresponding to these invisible singularities, including streaking artefacts. Another bias that is present is an assumption that dual curves are themselves smooth. In the inpainting region, this will encourage dual curves with low curvature thus invisible singularities are likely to follow near-circular arcs in the spatial domain. Final parameter choices, such as $\alpha_i, \beta_i$ and $c_i$, are not necessary at this point and will be chosen in Section \ref{numerical details}.}

	The immediate question to ask is whether this model is well posed. For a non-convex function we typically cannot expect numerically to find global minimisers but the following result shows we can expect some convergence to local minima. We present the following result which justifies looking for minima of this functional.
	\begin{theorem}\label{conv thm}
		If 
		\begin{itemize}
			\item $c_i$ are bounded away from 0
			\item $\rho>0$
			\item $A_d$ is differentiable in $d$
		\end{itemize}
		then sub-level sets of $E$ are weakly compact in $L^2(\Omega,\F R)\times L^2(\F R^2,\F R)$ and $E$ is weakly lower semi-continuous. i.e. for all $(u_n,v_n)\in L^2(\Omega,\F R)\times L^2(\F R^2,\F R)$:
		\[E(u_n,v_n) \text{ uniformly bounded implies a subsequence converges weakly} \]
		\[\liminf E(u_n,v_n) \geq E(u,v) \text{ whenever } u_n\rightharpoonup u, v_n\rightharpoonup v\]
	\end{theorem}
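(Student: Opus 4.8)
The plan is to establish the two claims separately: first an a priori bound that yields weak (sequential) compactness of the sub-level sets of $E$ from \eqref{model}, and then weak lower semi-continuity, proved term by term in \eqref{model}. The three quadratic data terms and $\beta_1\TV(u)$ are handled by standard convexity/linearity arguments; the only genuinely delicate term is the anisotropic penalty $\DTV_{\C Ru}(v)=\norm{A_{\C Ru}\nabla v}_{2,1}$, in which the regularising weight itself depends on the minimisation variable $u$.

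For compactness, suppose $E(u_n,v_n)\le C$, so in particular $u_n\ge0$ and, since $\beta_1>0$, $\TV(u_n)\le C/\beta_1$. The Poincar\'e inequality recalled in Section \ref{prelims} controls $u_n$ up to its mean, so I need an $L^1$ bound on $u_n$, and this is where the limited-angle geometry enters: for every \emph{observed} angle the complete $1$D projection is recorded, hence $\Omega'$ contains $\Theta'\times J$ with $\Theta'$ a set of directions of positive measure and $J$ an interval containing the shadow of $\Omega$. Since $u_n\ge0$ and $\int_J\C Ru_n(\theta,\cdot)=\int_\Omega u_n$ for $\theta\in\Theta'$, the bound on $\norm{S\C Ru_n-b}_2$ forces $\norm{u_n}_{L^1}\lesssim\norm{S\C Ru_n}_{L^1(\Omega')}\lesssim\norm{b}_2+C^{1/2}$, whence $\norm{u_n}_{L^2(\Omega)}$ is bounded. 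Because $\C R$ is bounded from $L^2(\Omega)$ into $L^2$ when $\Omega$ is bounded (a one-line Cauchy--Schwarz estimate) and $\C Ru_n$ is supported in a fixed compact set, $\norm{\C Ru_n}_2$ is bounded; since $\alpha_1$ is bounded below by a positive constant, the term $\tfrac12\norm{\C Ru_n-v_n}_{\alpha_1}^2\le C$ then bounds $\norm{v_n}_2$ as well. Thus the sub-level set is bounded in the Hilbert space $L^2(\Omega,\F R)\times L^2(\F R^2,\F R)$, so every sequence in it has a weakly convergent subsequence, and the limit lies again in the sub-level set once lower semi-continuity is known (the non-negativity cone being convex and closed, hence weakly closed).

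For lower semi-continuity, let $u_n\rightharpoonup u$ and $v_n\rightharpoonup v$ in $L^2$. The three quadratic data terms are weakly l.s.c. because $u\mapsto S\C Ru$, $v\mapsto Sv$ and $(u,v)\mapsto\C Ru-v$ are bounded and linear (so map weak to weak convergence) while $w\mapsto\norm{w-b}_2^2$ and $w\mapsto\norm{w}_{\alpha_1}^2$ are convex and continuous on $L^2$; and $\TV$ is weakly l.s.c. via its dual definition \eqref{TV}, since $\int u_n\op{div}\varphi\to\int u\op{div}\varphi$ for every admissible $\varphi\in C_c^\infty$, whence $\TV(u)\le\liminf_n\TV(u_n)$. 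The crux is $\DTV_{\C Ru}(v)$. The key structural fact is that $A_{\C Ru}$, defined by \eqref{direction tensor}, depends on $\C Ru$ \emph{only} through the Gaussian-smoothed quantities $(\C Ru)_\rho$ and $\bigl((\nabla(\C Ru)_\rho)(\nabla(\C Ru)_\rho)^T\bigr)_\sigma$, and convolution with a Gaussian turns weak $L^2$-convergence into uniform convergence of the function together with all its derivatives: since $u_n\rightharpoonup u$ gives $\C Ru_n\rightharpoonup\C Ru$ in $L^2$ with the sequence $L^2$- and $L^1$-bounded and supported in a fixed compact set, one gets $(\C Ru_n)_\rho\to(\C Ru)_\rho$ uniformly on $\F R^2$ (on compacts by equi-Lipschitz estimates and Arzel\`a--Ascoli, at infinity by the uniform Gaussian decay of the mollifications), and likewise for all derivatives and hence for the structure tensors. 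Invoking the hypotheses on $c_i$ --- those of Theorem \ref{smooth tensor}, which make $d\mapsto A_d$ continuous and, by property (ii), well defined where the two eigenvalues coincide --- this yields $A_{\C Ru_n}\to A_{\C Ru}$ uniformly on $\F R^2$, with $c_1,c_2$ uniformly bounded above and (by hypothesis) below, say by $\delta>0$. Consequently $\DTV_{\C Ru_n}(v_n)\ge\delta\,\TV(v_n)$, so $\sup_n\TV(v_n)<\infty$, and after a further subsequence $\nabla v_n\overset{\ast}{\rightharpoonup}\nabla v$ as measures. Since $(x,p)\mapsto|A_{\C Ru}(x)p|$ is nonnegative, jointly continuous, convex and positively $1$-homogeneous in $p$, a Reshetnyak-type lower-semicontinuity theorem gives $\norm{A_{\C Ru}\nabla v}_{2,1}\le\liminf_n\norm{A_{\C Ru}\nabla v_n}_{2,1}$, and replacing the fixed weight $A_{\C Ru}$ by $A_{\C Ru_n}$ changes the right-hand side by at most $\norm{A_{\C Ru_n}-A_{\C Ru}}_{\infty}\cdot\sup_n\TV(v_n)\to0$; hence $\DTV_{\C Ru}(v)\le\liminf_n\DTV_{\C Ru_n}(v_n)$. (Equivalently, one may argue by duality: for each $\psi\in C_c^\infty(\F R^2,\F R^2)$ with $|\psi|_2\le1$, $\DTV_{\C Ru_n}(v_n)\ge-\int v_n\op{div}(A_{\C Ru_n}^T\psi)$, and $\op{div}(A_{\C Ru_n}^T\psi)\to\op{div}(A_{\C Ru}^T\psi)$ strongly in $L^2$, so pairing with the weakly convergent $v_n$ and taking a supremum over $\psi$ finishes.) Summing the five estimates proves $\liminf_n E(u_n,v_n)\ge E(u,v)$.

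The step I expect to be the main obstacle is exactly this last one: the ordinary convex/linear lower-semicontinuity machinery cannot be applied to $\DTV_{\C Ru}(v)$ directly, because the weight $A_{\C Ru}$ moves together with $v$. The resolution is that the heat-kernel smoothing built into the structure-tensor construction upgrades the weak convergence $u_n\rightharpoonup u$ to uniform (indeed $C^\infty_{\mathrm{loc}}$ and globally uniform) convergence of $A_{\C Ru_n}$, so that $\DTV$ behaves asymptotically like a fixed-weight anisotropic total variation, to which the standard theory (or the duality argument above) applies; the hypothesis that $A_d$ be differentiable in $d$, guaranteed by Theorem \ref{smooth tensor}, is what legitimises this. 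A secondary, minor point is the $L^1$ bound on $u_n$ in the compactness part, which genuinely relies on at least one full projection being observed rather than on any coercivity of the data term by itself.
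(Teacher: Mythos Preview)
Your lower--semi-continuity argument is essentially the paper's: both proofs observe that the Gaussian smoothing built into the structure tensor upgrades weak convergence of $\C Ru_n$ to uniform convergence, hence $A_{\C Ru_n}\to A_{\C Ru}$ in $L^\infty$; both then use the lower bound $c_i\ge\delta$ to control $\TV(v_n)$ uniformly, replace $A_{\C Ru_n}$ by the fixed weight $A_{\C Ru}$ at cost $o(1)$, and finish by a duality argument (the paper does exactly your ``equivalently, one may argue by duality'' alternative, testing against $\varphi$ with $|\varphi|_2\le 1$). Your Reshetnyak option is a legitimate alternative the paper does not mention.

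The compactness argument differs more. The paper uses the hypothesis $c_i\ge\delta$ already here, to bound $\TV(v_n)$ from $\DTV_{\C Ru_n}(v_n)$, and then reduces the five bounded terms to a classical form $\norm{A(u,v)-b}_2^2+\TV((u,v))$, citing \cite{Chambolle1997}. You instead bound $\norm{u_n}_{L^1}$ via mass preservation of the X-ray transform (a nice, more explicit argument that the paper does not give), and then bound $\norm{v_n}_2$ directly. One slip: you assume ``$\alpha_1$ is bounded below by a positive constant'', but in the model \eqref{model} $\alpha_1$ is spatially varying and in the paper's own parameter choices (Table~\ref{param table}) it vanishes on $\Omega'$. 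The fix is immediate --- use the $\alpha_3\norm{Sv_n-b}_2^2$ term to control $v_n$ on $\Omega'$ and the $\alpha_1$ term on $\Omega'^c$ --- but as written this step does not go through.
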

	\noindent The proof of this theorem is contained in \ref{App: conv thm}. This theorem justifies numerical minimisation of $E$ because it tells us that all \oldtext{(locally)} descending sequences ($E(u_n,v_n)\leq E(u_{n-1},v_{n-1})$) have a convergent subsequence and any limit point must minimise $E$ over the original sequence.

	\section{Numerical Solution}\label{optimisation}
	To address the issue of convergence we shall first generalise our functional and prove the result in the general setting. Functionals will be of the form $E\colon X\times Y\to \F R$ where $X$ and $Y$ are Banach spaces and $E$ accepts the decomposition \[E(x,y) = f(x,y) + g(J(x,y))\]
	such that:
	\begin{align}
	&\text{Sub-level sets of } E \text{ are weakly compact}\label{myFunc-start}
	\\&f\colon X\times Y\to \F R \text{ is jointly convex in } (x,y) \text{ and bounded below} 
	\\&g\colon Z\to \F R \text{ is a semi-norm on Banach space } Z \text{, i.e. for all } t\in\F R, z,z_1,z_2\in Z \notag
	\\&\qquad g(z)\geq0, g(tz) = |t|g(z)\text{ and } g(z_1+z_2)\leq g(z_1)+g(z_2)
	\\&J\colon X\times Y\to Z \text{ is } C^1 \text{ and for all } \oldtext{ \text{local } x,y } \newtext{K\Subset X\times Y}, \;\exists L_x,L_y<\infty \newtext{\text{ such that } \forall (x,y)\in K}\notag
	\\&\qquad g(J(x+dx,y)-J(x,y) -\nabla_xJ(x,y)dx) \leq L_x\norm{dx}_X^2
	\\&\qquad g(J(x,y+dy)-J(x,y) -\nabla_yJ(x,y)dy) \leq L_y\norm{dy}_Y^2\label{myFunc-end}
	\end{align}
	Note if $g$ is a norm then $L_x$ can be chosen to be the standard Lipschitz factor of $\nabla_xJ$. If $J$ is twice \newtext{Fr\`echet-}differentiable then these constants must be finite. In our case:
	\begin{align*}
	f(x,y) &= \frac12\norm{Rx-y}_{\alpha_1}^2+\frac{\alpha_2}{2}\norm{S\C Rx-b}^2
	\\&\qquad+\frac{\alpha_3}{2}\norm{Sy-b}^2 + \beta_1\TV(x) + \splitln{0}{x\geq 0}{\infty}{\text{else}}
	\\g(z) &= \beta_2\norm{z}_{2,1}
	\\J(x,y) &= A_{Rx}\nabla y\implies \tau_x \sim \beta_2\norm{\nabla^2A_{\cdot}}\norm{R}\TV(y), \tau_y = 0
	\end{align*}
	Finiteness of $\norm{\nabla^2 A}$ and weak compactness of sub-level sets are given by Theorems \ref{smooth tensor} and \ref{conv thm} respectively. The alternating descent algorithm is stated in Algorithm \ref{alg}.
	\begin{algorithm}
		\caption{}
		\begin{algorithmic}
			\REQUIRE Any $x_0\in X$, $\tau_x, \tau_y\geq 0$.
			\STATE $n\leftarrow 0$
			\WHILE{not converged}
			\STATE $n\leftarrow n+1$
			\vspace{-30pt}
			\STATE{\begin{multline}
			x_{n} = \argmin_{x\in X} f(x,y_{n-1}) + \tau_x\norm{x-x_{n-1}}_{X}^2 
			\\+g(J(x_{n-1},y_{n-1})+\nabla_xJ(x_{n-1},y_{n-1})(x-x_{n-1})) \label{xstep}
			\end{multline}}
			\vspace{-45pt}
			\STATE{\begin{multline}
				y_{n} = \argmin_{y\in Y} f(x_n,y) + \tau_y\norm{y-y_{n-1}}_{Y}^2
				\\\qquad\qquad +g(J(x_{n},y_{n-1})+\nabla_yJ(x_{n},y_{n-1})(y-y_{n-1}))\label{ystep}
				\end{multline}}
			\vspace{-25pt}
			\ENDWHILE
			\ENSURE $(x_n,y_n)$
		\end{algorithmic}
		\label{alg}
	\end{algorithm}
	Classical alternating proximal descent would take $x_{n+1} = \argmin E(x,y_n)+\tau_x\norm{x-x_n}_2^2$ but because of the complexity of $A_{\C R u}$ each sub-problem would have the same complexity as the full problem, making it computationally infeasible. \oldtext{The classical solution to this would have been to find a smooth approximation of $\norm\cdot_{2,1}$ with a small maximum error, $\epsilon$. This now decomposes $E(x,y_n)$ into the sum of a differentiable function and a convex function for which there exist many minimisation algorithms, e.g. \cite{Beck2009}. There are two problems with this approach, the original energy functional has been changed and the new energy is ill-conditioned with Lipschitz gradient $O(\sfrac{|\partial_dA|}\epsilon)$. }By linearising $A_d$ we overcome this problem as both sub-problems are convex, for which there are many standard solvers such as \cite{Chambolle2011, Mosek}. This second approach is similar to that of the ProxDescent algorithm \cite{Drusvyatskiy2016,Ochs2017}. We extend this algorithm to cover alternating descent and achieve equivalent convergence guarantees. Using Algorithm \ref{alg}, our statement of convergence is the following theorem.
	\begin{theorem}\label{alternating thm}
		\hfill\\Convergence of alternating minimisation: If $E$ satisfies \eqref{myFunc-start}-\eqref{myFunc-end} \newtext{and $(x_n,y_n)$ are a sequence generated by Algorithm \ref{alg}} then 
		\begin{itemize}
			\item $E(x_{n+1},y_{n+1}) \leq E(x_n,y_n)$ for each $n$.
			\item A subsequence of $(x_n,y_n)$ must converge weakly in $X\times Y$
			\item If $\{(x_n,y_n)\st n=1,\ldots\}$ is contained in a finite dimensional space then every limit point of $(x_n,y_n)$ must be a critical point (as will be defined in Definition \ref{critical point}) of $E$ in both the direction of $x$ and $y$.
		\end{itemize}
	\end{theorem}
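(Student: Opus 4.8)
The plan is to run the two-block version of the majorize--minimize scheme behind ProxDescent, using the surrogate subproblems \eqref{xstep}--\eqref{ystep} already built into Algorithm \ref{alg}. For the $n$-th $x$-update, let $\tilde E_x(z) = f(z,y_{n-1}) + \tau_x\norm{z-x_{n-1}}_X^2 + g\big(J(x_{n-1},y_{n-1}) + \nabla_xJ(x_{n-1},y_{n-1})(z-x_{n-1})\big)$; this is convex (its first term by hypothesis, its last as the seminorm $g$ composed with an affine map), is $2\tau_x$-strongly convex when $\tau_x>0$, and satisfies $\tilde E_x(x_{n-1}) = E(x_{n-1},y_{n-1})$. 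Since $x_n$ minimises $\tilde E_x$, strong convexity gives $\tilde E_x(x_n)\le E(x_{n-1},y_{n-1}) - \tau_x\norm{x_n-x_{n-1}}_X^2$, while the triangle inequality for $g$ and the quadratic remainder bound \eqref{myFunc-end} give $E(x_n,y_{n-1})\le \tilde E_x(x_n) + (L_x-\tau_x)\norm{x_n-x_{n-1}}_X^2$, $L_x$ being the modulus attached to a compact set containing the iterates. Adding these, and repeating the estimate in the $y$-block, yields $E(x_n,y_n)\le E(x_{n-1},y_{n-1}) - (2\tau_x-L_x)\norm{x_n-x_{n-1}}_X^2 - (2\tau_y-L_y)\norm{y_n-y_{n-1}}_Y^2$, which is the first bullet provided $2\tau_x>L_x$ and $2\tau_y>L_y$ --- a step-size hypothesis I would state explicitly, noting that $\tau_y=0$ is legitimate precisely in our case, where $J$ is affine in $y$ so $L_y=0$.

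The second bullet is then immediate: monotonicity confines $(x_n,y_n)$ to the sub-level set $\{(x,y):E(x,y)\le E(x_0,y_0)\}$, which is weakly compact by \eqref{myFunc-start}, so some subsequence converges weakly in $X\times Y$.

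For the third bullet I would pass to the finite-dimensional setting, where weak limits are strong limits and the sub-level set is compact, so a single uniform pair $L_x,L_y$ is available. Telescoping the decrease estimates against $E(x_0,y_0)-\inf E<\infty$ (finite because $f$ is bounded below and $g\ge0$) forces $\norm{x_n-x_{n-1}}_X\to0$ and $\norm{y_n-y_{n-1}}_Y\to0$. Given any limit point $(x^\star,y^\star)$, choose a subsequence with $x_{n_k}\to x^\star$, $y_{n_k}\to y^\star$; then also $x_{n_k-1}\to x^\star$ and $y_{n_k-1}\to y^\star$. The optimality condition for \eqref{xstep} reads $0\in\partial_xf(x_{n_k},y_{n_k-1}) + 2\tau_x(x_{n_k}-x_{n_k-1}) + \nabla_xJ(x_{n_k-1},y_{n_k-1})^\ast\xi_k$ with $\xi_k\in\partial g\big(J(x_{n_k-1},y_{n_k-1}) + \nabla_xJ(x_{n_k-1},y_{n_k-1})(x_{n_k}-x_{n_k-1})\big)$, the sum and chain rules being valid since $g$ is finite-valued and continuous. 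Because $g$ is a seminorm, $\partial g$ is norm-bounded, so along a further subsequence $\xi_k\to\xi^\star$; continuity of $J$ and $\nabla J$ drives the argument of $\partial g$ to $J(x^\star,y^\star)$ and $\nabla_xJ$ to $\nabla_xJ(x^\star,y^\star)$, while the proximal term vanishes. Outer semicontinuity (graph closedness) of the subdifferentials of the convex maps $g$ and $f(\cdot,y^\star)$ then gives $\xi^\star\in\partial g(J(x^\star,y^\star))$ and $-\nabla_xJ(x^\star,y^\star)^\ast\xi^\star\in\partial_xf(x^\star,y^\star)$, i.e. $0\in\partial_xf(x^\star,y^\star)+\nabla_xJ(x^\star,y^\star)^\ast\partial g(J(x^\star,y^\star))$, the $x$-criticality condition of Definition \ref{critical point}. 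The identical argument on \eqref{ystep} gives $y$-criticality, so $(x^\star,y^\star)$ is critical for $E$ in both directions.

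I expect the main obstacle to be this limit passage: guaranteeing one uniform pair $L_x,L_y$ (which is what forces both finite dimensionality and the confinement to a sub-level set), extracting a convergent subsequence of multipliers $\xi_k$ from the seminorm bound, and then invoking outer semicontinuity of $\partial g$ and of $\partial_xf$ --- the latter requiring some care because of the nonnegativity indicator built into $f$, so one must first record that $x^\star\ge0$ and check that the subdifferential sum rule still applies there. The descent and weak-compactness bullets are routine once the step-size requirement $2\tau_x>L_x$, $2\tau_y>L_y$ is stated explicitly, rather than left implicit in the algorithm's input, which only asks for $\tau_x,\tau_y\ge0$.
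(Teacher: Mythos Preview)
Your argument for the first two bullets is correct and essentially matches the paper's Lemma \ref{global convergence}: the only cosmetic difference is that you squeeze an extra $\tau_x$ from the $2\tau_x$-strong convexity of the surrogate, whereas the paper only compares the minimiser against the previous iterate and therefore needs the slightly stronger step-size condition $\tau_x\ge L_x+\tau_X$. Either way the descent inequality and the weak-compactness consequence are the same.

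For the third bullet your route genuinely differs from the paper's. The paper never writes down an optimality system or extracts multipliers; instead it sandwiches the surrogate between two applications of the seminorm remainder bound to obtain, for every $x$,
\[
E(x_{n+1},y_n)\;\le\;E(x,y_n)+2\tau_x\norm{x-x_n}_X^2,
\]
and then simply passes to the limit in this \emph{functional} inequality (finite dimensionality being used only to get strong convergence and hence continuity of the right-hand side). The slope condition $|\partial_xE(x_*,y_*)|=0$ drops out in one line. Your approach---writing $0\in\partial_xf+2\tau_x(x_n-x_{n-1})+(\nabla_xJ)^*\partial g$, extracting a convergent subsequence of dual variables $\xi_k$, and invoking graph-closedness---is the more standard ProxDescent-style argument and is valid, but it costs you the chain rule, the multiplier compactness, and a joint (not merely $x$-wise) outer semicontinuity of $\partial_x f$ as both arguments move. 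The paper's route sidesteps all of this.

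There is one genuine gap in your proposal: the inclusion $0\in\partial_xf(x^\star,y^\star)+\nabla_xJ(x^\star,y^\star)^*\partial g(J(x^\star,y^\star))$ is \emph{not} ``the $x$-criticality condition of Definition \ref{critical point}''. Definition \ref{critical point} is the slope condition $|\partial_xE(x^\star,y^\star)|=0$, and your inclusion only implies it after a further step: with $p+(\nabla_xJ)^*\xi=0$ and $\xi\in\partial g(J(x^\star,y^\star))$, the subgradient inequalities for $f$ and $g$ together with $|\langle\xi,w\rangle|\le g(w)$ and assumption \eqref{myFunc-end} give $E(x^\star,y^\star)-E(x^\star+dx,y^\star)\le L_x\norm{dx}_X^2$, from which the slope vanishes. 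This is short but must be said; as written you have conflated two distinct notions of criticality.
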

	This result is the parallel of Lemma 10, Theorem 18 and Corollary 21 in \cite{Ochs2017} without the alternating or block descent setting. There is some overlap in the analysis for the two settings although we present an independent proof which is more direct and we feel gives more intuition for our more restricted class of functionals. The rest of this section is now dedicated to the proof of this theorem.
	
	For notational convenience we shall compress notation such that:
	\[f_{n,m} = f(x_n,y_m), \quad J_{n,m} = J(x_n,y_m), \quad E_{n,m}=E(x_n,y_m) \text{ etc.}\]	
	
	\subsection{Sketch Proof}
	The proof of Theorem \ref{alternating thm} will be a consequence of two lemmas.
	\begin{itemize}
		\item \oldtext{(Lemma \ref{global convergence})} \newtext{In Lemma \ref{global convergence} we show } for $\tau_x,\tau_y$ \newtext{(Algorithm \ref{alg})} sufficiently large, the sequence $E_{n,n}$ is monotonically decreasing and sequences $\norm{x_n-x_{n-1}}_X, \norm{y_n-y_{n-1}}_Y$ converge to 0. This follows by a relatively standard sufficient decrease argument as seen in \cite{iPalm,Ochs2017, Liang2016}.
		\item \oldtext{(Lemma \ref{critical convergence})} \newtext{In Lemma \ref{critical convergence}} we first define a critical point for functions which are non-convex and non-differentiable. If a subsequence of our iterates converges in norm then the limit must be a critical point in each of the two axes. Note that it is very difficult to expect more than this in such a general setting, for instance Example \ref{2-axis example} shows a uniformly convex energy which shows this to be sharp. The common technique for overcoming this is assuming differentiability in the terms including both $x$ and $y$ \cite{iPalm, iPiano, Palm}. \oldtext{The necessity of our algorithm is that the non-convex term is also non-differentiable and so it is unclear what would be required to make this statement stronger.} \newtext{These previous results and algorithms are not available to us as we allow non-convex terms which are also non-differentiable.}
	\end{itemize}
	\begin{example}\label{2-axis example}
		Define $E(x,y) = \max(x,y)+x^2+y^2$ for $x,y\in\F R$. This is clearly jointly convex in $(x,y)$ and thus a simple case of functions considered in Theorem \ref{alternating thm}. Suppose $(x_0,y_0) = (0,0)$ then
		\begin{align*}
		x_1 &= \argmin E(x,y_0) + \tau_x(x-x_0)^2 = 0
		\\y_1 &= \argmin E(x_1,y) + \tau_y(y-y_0)^2 = 0
		\end{align*}
		Hence $(0,0)$ is a limit point of the algorithm but it is not a critical point, $E$ is uniformly convex and so it has only one critical point, $(-\sfrac12,-\sfrac12)$.
	\end{example}
	
	\subsection{Sufficient Decrease Lemma}
	In the following we prove the monotone decrease property of our energy functional between iterations.
	\begin{lemma}\label{global convergence}
		If for each $n$ \[\tau_x \geq L_x+\tau_X, \qquad \tau_y \geq L_y+\tau_Y\]
		for some $\tau_X,\tau_Y\geq0$ then \[\sum^\infty\tau_X\norm{x_n-x_{n-1}}_{ X}^2 + \tau_Y\norm{y_n-y_{n-1}}_{ Y}^2 \leq E(x_0,y_0)\]
		and \[E(x_{n+1},y_{n+1})\leq E(x_n,y_n) \text{ for all } n\]
	\end{lemma}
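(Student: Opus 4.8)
\noindent\emph{Proof plan.} The argument is the standard sufficient-decrease (majorisation) estimate, applied in turn to the $x$- and $y$-updates of Algorithm \ref{alg} and then combined. Without loss of generality assume $\inf E = 0$: subtracting the constant $\inf f$ from $f$ changes neither the algorithm nor any of the hypotheses \eqref{myFunc-start}--\eqref{myFunc-end}, so we may take $E\geq 0$ throughout. Note that \eqref{xstep} and \eqref{ystep} minimise convex functionals (the semi-norm $g$ after an affine map, plus the convex $f$ and a quadratic), made coercive by the proximal term, so — as the algorithm requires — the minimisers $x_n,y_n$ exist (in the reflexive $L^2$ setting of our application this is immediate from coercivity and weak lower semi-continuity).

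For the $x$-step write
\[ Q_n(x) := f(x,y_{n-1}) + \tau_x\norm{x-x_{n-1}}_X^2 + g\bigl(J_{n-1,n-1} + \nabla_xJ_{n-1,n-1}(x-x_{n-1})\bigr) \]
for the surrogate minimised in \eqref{xstep}, so $Q_n(x_n)\leq Q_n(x_{n-1})$. Evaluating at $x=x_{n-1}$ the proximal term vanishes and the linearisation is exact, so $Q_n(x_{n-1}) = f_{n-1,n-1}+g(J_{n-1,n-1}) = E_{n-1,n-1}$. In the other direction, the triangle inequality for $g$ followed by the first bound in \eqref{myFunc-end} gives
\begin{align*}
g(J_{n,n-1}) &\leq g\bigl(J_{n-1,n-1}+\nabla_xJ_{n-1,n-1}(x_n-x_{n-1})\bigr) \\
&\quad + g\bigl(J_{n,n-1}-J_{n-1,n-1}-\nabla_xJ_{n-1,n-1}(x_n-x_{n-1})\bigr) \\
&\leq g\bigl(J_{n-1,n-1}+\nabla_xJ_{n-1,n-1}(x_n-x_{n-1})\bigr) + L_x\norm{x_n-x_{n-1}}_X^2 .
\end{align*}
Adding $f_{n,n-1}$ to both sides and using $\tau_x\geq L_x+\tau_X$,
\begin{align*}
E_{n,n-1} = f_{n,n-1}+g(J_{n,n-1}) &\leq Q_n(x_n) - (\tau_x-L_x)\norm{x_n-x_{n-1}}_X^2 \\
&\leq Q_n(x_{n-1}) - \tau_X\norm{x_n-x_{n-1}}_X^2 = E_{n-1,n-1} - \tau_X\norm{x_n-x_{n-1}}_X^2 .
\end{align*}

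Running the identical argument with base point $(x_n,y_{n-1})$, the surrogate in \eqref{ystep}, and the second bound in \eqref{myFunc-end} with $\tau_y\geq L_y+\tau_Y$ yields $E_{n,n}\leq E_{n,n-1}-\tau_Y\norm{y_n-y_{n-1}}_Y^2$, hence
\[ E_{n,n} \leq E_{n-1,n-1} - \tau_X\norm{x_n-x_{n-1}}_X^2 - \tau_Y\norm{y_n-y_{n-1}}_Y^2 , \]
which is the claimed monotone decrease (as $\tau_X,\tau_Y\geq 0$). Summing over $n=1,\dots,N$ telescopes to $\sum_{n=1}^{N}\tau_X\norm{x_n-x_{n-1}}_X^2+\tau_Y\norm{y_n-y_{n-1}}_Y^2 \leq E_{0,0}-E_{N,N}\leq E_{0,0}$, using $E_{N,N}\geq 0$, and $N\to\infty$ gives the stated summability.

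There is no deep obstacle here; the one point needing care is recognising that the linearisation residual appearing inside the proximal subproblem is \emph{exactly} the quantity bounded in \eqref{myFunc-end}, so that a single application of the triangle inequality for the semi-norm $g$ converts the surrogate decrease $Q_n(x_n)\leq Q_n(x_{n-1})$ into a decrease of the true energy $E$ with a quadratic margin — this is precisely where the structural assumptions ($g$ a semi-norm, $J$ with the relative-smoothness bound) are used. A minor bookkeeping point is the normalisation $\inf E = 0$, needed to replace $E_{0,0}-E_{N,N}$ by the clean bound $E_{0,0}$.
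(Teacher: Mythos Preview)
Your proof is correct and follows essentially the same sufficient-decrease argument as the paper. The paper organises it as four separate inequalities (two from optimality of the subproblems, two from the triangle inequality for $g$ together with the mean value theorem to control the linearisation residual) and then sums, whereas you package the same ingredients via the surrogate $Q_n$ and invoke the abstract bound \eqref{myFunc-end} directly; the content is identical, and your explicit normalisation $\inf E=0$ is in fact slightly more careful than the paper's final line.
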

	\begin{proof}
	Note by Equations \eqref{xstep}, \eqref{ystep} (definition of our sequence) we have
	\begin{align}
	f_{n+1,n} + g(J_{n,n}+\nabla_xJ_{n,n}(x_{n+1}-x_n)) + \tau_x\norm{x_{n+1}-x_n}_{ X}^2 &\leq E_{n,n} \label{gc1}
	\\f_{n+1,n+1} + g(J_{n+1,n}+\nabla_yJ_{n+1,n}(y_{n+1}-y_{n})) + \tau_y\norm{y_{n+1}-y_{n}}_{ Y}^2 &\leq E_{n+1,n} \label{gc3}
	\end{align}
	Further, by application of the triangle inequality for $g$ and the mean value theorem we have
	\begin{align}
	g(J_{n+1,n}) - &g(J_{n,n}+\nabla_xJ_{n,n}(x_{n+1}-x_{n}))+ \tau_X\norm{x_{n+1}-x_{n}}_{ X}^2\notag
	\\&\qquad \leq g(J_{n+1,n} -J_{n,n}-\nabla_xJ_{n,n}(x_{n+1}-x_{n}))+ \tau_X\norm{x_{n+1}-x_{n}}_{ X}^2\notag
	\\&\qquad = g([\nabla_x J(\xi)-\nabla_xJ_{n,n}](x_{n+1}-x_{n}))+ \tau_X\norm{x_{n+1}-x_{n}}_{ X}^2\notag
	\\&\qquad \leq \op{Lip}_{X,g}(\nabla_x J(\cdot,y_n))\norm{x_{n+1}-x_{n}}_{ X}^2+ \tau_X\norm{x_{n+1}-x_{n}}_{ X}^2 \notag
	\\&\qquad \leq \tau_x\norm{x_{n+1}-x_{n}}_{ X}^2 \label{gc2}
	\end{align}
	By equivalent argument,
	\begin{equation}
	g(J_{n+1,n+1}) - g(J_{n,n+1}+\nabla_yJ_{n+1,n}(y_{n+1}-y_{n}))+ \tau_Y\norm{y_{n+1}-y_{n}}_{ Y}^2 \leq \tau_y\norm{y_{n+1}-y_n}_{ Y}^2 \label{gc4}
	\end{equation}
	Summing Equations \eqref{gc1}-\eqref{gc4} gives
	\[ E_{n+1,n+1} + \tau_X\norm{x_{n+1}-x_n}_{ X}^2+\tau_Y\norm{y_{n+1}-y_n}_{ Y}^2\leq E_{n,n}\]
	This immediately gives the monotone decrease property of $E_{n,n}$. If we also sum this over $n$ then we achieve the final statement of the theorem:
	\[ \sum_{n=1}^\infty\tau_X\norm{x_{n+1}-x_n}_{ X}^2+\tau_Y\norm{y_{n+1}-y_n}_{ Y}^2\leq E_{0,0} - \lim E_{n,n}\leq E_{0,0}\]
	\end{proof}

	\subsection{Convergence to Critical Points}
	First we follow the work by Drusvyatskiy et. al. \cite{Drusvyatskiy2016a} we shall define criticality in terms of the slope of a function.
	\begin{definition}\label{critical point}
	We shall say that $x_*$ is a \oldtext{first order minimum} \newtext{critical point} of $F\colon X\to \F R$ if 
	\[ |\partial F(x_*)|= 0\]
	where we define \newtext{the slope of $F$ at $x_*$ to be}
	\[|\partial F(x_*)| = \limsup_{dx\to 0} \frac{\max(0,F(x_*)-F(x_*+dx))}{\norm{dx}}\]
	\end{definition}
	The first point to note is that this definition generalises the concept of a first order critical point for both smooth functions and convex functions (in terms of the convex sub-differential). In particular
	\begin{align*}
	F\in C^1 \implies &|\partial F(x_*)| = \max\left(0,\sup_{\norm{dx}= 1}-\IP{\nabla F(x_*)}{dx}\right) = \norm{\nabla F(x_*)} 
	\\\text{Hence } &|\partial F(x_*)| = 0 \iff \norm{\nabla F(x_*)} = 0\iff \nabla F(x_*) = 0
	\end{align*}
	\begin{align*}
	F\text{ convex, hence } & x^*\in \argmin F \iff F(x_*) \leq F(x_*+dx) \text{ for all } dx
	\\\text{Hence } &\hspace{-1pt}|\partial F(x_*)| = 0 \iff \forall dx, 0\geq \frac{F(x_*)-F(x_*+dx)}{\norm{dx}} \iff x_*\in\argmin F
	\end{align*}

	\oldtext{However, at a point of non-differentiability with $|\partial F|=0$ we can still say a little more than local criticality (a priori a minimum/maximum/saddle point). Due to the $\limsup$ in the definition we can in fact rule out some of these situations in a way that biases us towards minima, this is why we refer to this as a first order minimum rather than simply a first order critical point.} 
	\newtext{For a differentiable function we cannot tell whether a critical point is a local minimum, maximum or saddle point. In general, this is also true for Definition \ref{critical point}, however, at points of non-differentiability there is a bias towards local minima. This can be seen in the following example.}
	\begin{example}
		Consider $F(x) = -\norm{x}$
		\[|\partial F(0)| = \limsup_{dx\to 0}\;\max\left(0,\frac{0+\norm{0+dx}}{\norm{dx}}\right) = 1\]
		\oldtext{ Hence, 0 is not a first order minimum of $F$ which fits with an intuitive understanding of the term. In fact, due to the $\limsup$, if \emph{any} directional derivative is negative then we are not at a first order minimum.}
		\newtext{Hence, 0 is not a critical point of $F$. This bias is due to the $\limsup$ in the definition which detects the strictly negative directional derivatives. This doesn't affect smooth functions as directional derivatives must vanish continuously to 0 about a critical point.}
	\end{example}
	Some more examples are shown in Figure \ref{critical points}.
	\begin{figure}
		\centering
		\begin{subfigure}{.42\textwidth}
			\begin{tikzpicture}
				\node at (0,0) {\includegraphics[width=\textwidth,trim={0 0 300 0}, clip]{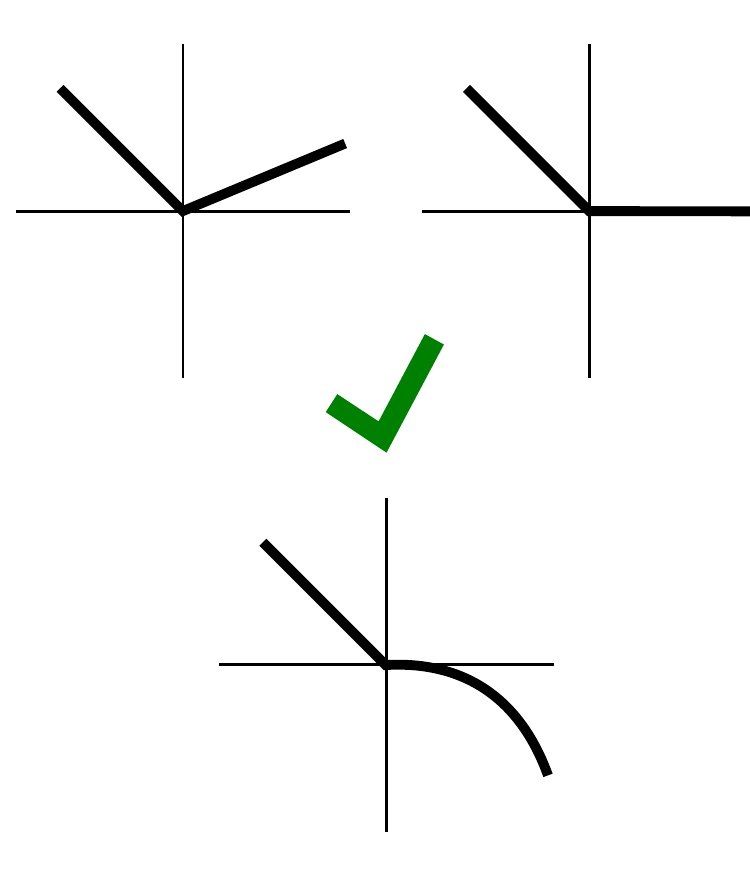}};
				\node at (1.6,-1.0) {$x> 0, \epsilon>0$};
				\node at (1.6,-1.5) {$F(x)=-x^{1+\epsilon}$};
			\end{tikzpicture}
			\caption{\newtext{Examples of Critical Points}}
		\end{subfigure}
		\hfill
		\begin{subfigure}{.42\textwidth}
			\begin{tikzpicture}
				\node at (0,0) {\includegraphics[width=\textwidth,trim={300 0 0 0}, clip]{curves}};
				\node at (1.3,-1.0) {$x>0, \epsilon\leq0$};
				\node at (1.3,-1.5) {$F(x)=-x^{1+\epsilon}$};
			\end{tikzpicture}
			\caption{\newtext{Examples of Non-Critical Points}}
		\end{subfigure}
		\caption{Examples of 1D functions where 0 is/isn't a critical point by Definition \ref{critical point}. If a function is piece-wise linear then 0 is a critical point iff each directional derivative is non-negative. If the function can be approximated on an interval of $x>0$ to first order terms by $F(x) = cx^{1+\epsilon}$ then criticality can be characterised sharply. If $c\geq0$ then 0 is always a critical point. If $c<0$ then 0 is a critical point iff $\epsilon>0$, however, 0 is never a local minimum.}\label{critical points}
	\end{figure}
	Now we shall show that our iterative sequence converges to a critical point in this sense.
	\begin{lemma}\label{critical convergence}
		If $(x_n,y_n)$ are as given \oldtext{above} \newtext{by Algorithm \ref{alg}} and $X, Y$ are finite dimensional spaces then every limit point of $(x_n,y_n)$, e.g. $(x_*,y_*)$, is a critical point of $E$ in each coordinate direction. i.e. \[|\partial_xE(x_*,y_*)| = |\partial_yE(x_*,y_*)| = 0\]
	\end{lemma}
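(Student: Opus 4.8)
The plan is to extract the coordinatewise slope bounds $|\partial_xE(x_*,y_*)|=|\partial_yE(x_*,y_*)|=0$ directly from the optimality of the two single-variable subproblems \eqref{xstep}, \eqref{ystep}, by testing each minimiser against a small perturbation of the limit point and then passing to the limit along the convergent subsequence. This is the natural analogue of the convex case discussed after Definition \ref{critical point}, and Example \ref{2-axis example} shows one cannot hope for joint criticality.

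First I would collect what Lemma \ref{global convergence} provides. Taking $\tau_x>L_x$ and $\tau_y>L_y$ strictly (so the free constants $\tau_X,\tau_Y$ there are positive) yields $\norm{x_n-x_{n-1}}_X\to0$, $\norm{y_n-y_{n-1}}_Y\to0$, and that $E_{n,n}$ is non-increasing; let $E_\infty=\lim_nE_{n,n}$, and note the sandwich $E_{n+1,n+1}\le E_{n+1,n}\le E_{n,n}$ forces $E_{n+1,n}\to E_\infty$ as well. Fixing a limit point $(x_*,y_*)$ with $(x_{n_k},y_{n_k})\to(x_*,y_*)$ — strong convergence, since $X,Y$ are finite dimensional — the vanishing increments also give $x_{n_k-1}\to x_*$ and $y_{n_k-1}\to y_*$. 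Weak lower semicontinuity (Theorem \ref{conv thm}) gives $E(x_*,y_*)\le E_\infty$. Finally, all the base points $(x_{n-1},y_{n-1})$, $(x_n,y_{n-1})$ and $(x_n,y_n)$ lie in the compact sub-level set $\{E\le E_{0,0}\}$, so the quadratic estimates \eqref{myFunc-end} hold at each of them with the fixed constants $L_x,L_y$.

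For the $x$-direction I would set $\Phi_n(x)=f(x,y_{n-1})+\tau_x\norm{x-x_{n-1}}_X^2+g\big(J_{n-1,n-1}+\nabla_xJ_{n-1,n-1}(x-x_{n-1})\big)$, so $x_n=\argmin\Phi_n$. Two uses of the triangle inequality for the semi-norm $g$ together with the quadratic bound at $(x_{n-1},y_{n-1})$ give $\Phi_n(x_n)\ge E_{n,n-1}$ and $\Phi_n(x)\le E(x,y_{n-1})+(\tau_x+L_x)\norm{x-x_{n-1}}_X^2$ for every $x$. Testing the optimality inequality $\Phi_{n_k}(x_{n_k})\le\Phi_{n_k}(x_*+dx)$ then gives, for every $dx$,
\[ E_{n_k,n_k-1}\ \le\ E(x_*+dx,y_{n_k-1})+(\tau_x+L_x)\norm{x_*+dx-x_{n_k-1}}_X^2.\]
Letting $k\to\infty$ — so the left side tends to $E_\infty\ge E(x_*,y_*)$, the quadratic term to $(\tau_x+L_x)\norm{dx}_X^2$, and $E(x_*+dx,y_{n_k-1})\to E(x_*+dx,y_*)$ by continuity of $y\mapsto E(x_*+dx,y)$ near $y_*$ — yields $E(x_*,y_*)\le E(x_*+dx,y_*)+(\tau_x+L_x)\norm{dx}_X^2$ for all $dx$, which is vacuous (hence harmless) when $x_*+dx$ is infeasible. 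Dividing by $\norm{dx}_X$ and taking $\limsup_{dx\to0}$ gives $|\partial_xE(x_*,y_*)|=0$. The $y$-direction is symmetric: with $\Psi_n(y)=f(x_n,y)+\tau_y\norm{y-y_{n-1}}_Y^2+g\big(J_{n,n-1}+\nabla_yJ_{n,n-1}(y-y_{n-1})\big)$ one has $y_n=\argmin\Psi_n$, $\Psi_{n}(y_{n})\ge E_{n,n}$ and $\Psi_n(y)\le E(x_n,y)+(\tau_y+L_y)\norm{y-y_{n-1}}_Y^2$; comparing $\Psi_{n_k}(y_{n_k})\le\Psi_{n_k}(y_*+dy)$ and passing to the limit (now using continuity of $x\mapsto E(x,y_*+dy)$ along the feasible sequence $x_{n_k}\to x_*$) gives $E(x_*,y_*)\le E(x_*,y_*+dy)+(\tau_y+L_y)\norm{dy}_Y^2$, hence $|\partial_yE(x_*,y_*)|=0$.

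The hard part is the limit passage in the displayed inequality: it needs upper semicontinuity of $E$ in the coordinate that is \emph{not} being perturbed, and this is exactly where finite-dimensionality is essential — in infinite dimensions $\norm{A\nabla\cdot}_{2,1}$ and $\TV(\cdot)$ are only lower semicontinuous and the argument breaks. I would also have to verify that the constraint $u\ge0$ hidden inside $f$ does not destroy this continuity; it does not, because the iterates that matter are feasible (so $E$ agrees there with its continuous part) and any perturbation leaving the feasible set sends $E$ to $+\infty$ and therefore contributes nothing to the slope. A minor bookkeeping point is that $\tau_x,\tau_y$ must be taken strictly above $L_x,L_y$ so that Lemma \ref{global convergence} actually delivers the vanishing increments used throughout.
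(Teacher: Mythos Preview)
Your proposal is correct and follows essentially the same route as the paper: both use the triangle inequality for $g$ together with the quadratic bound on $\nabla J$ to sandwich the subproblem optimality between the true energy at the iterate and the true energy at a test point plus a quadratic remainder, then pass to the limit via continuity in finite dimensions to obtain $E(x_*,y_*)\le E(x,y_*)+C\norm{x-x_*}_X^2$, from which the slope vanishes. The paper's version is marginally more direct---it derives $E(x_{n+1},y_n)\le E(x,y_n)+2\tau_x\norm{x-x_n}_X^2$ for all $x$ and takes limits on both sides without the detour through $E_\infty$ and the sandwich $E_{n+1,n+1}\le E_{n+1,n}\le E_{n,n}$---whereas you are more careful about the need for vanishing increments (hence $\tau_x>L_x$ strictly) and about the feasibility constraint, points the paper leaves implicit.
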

	\begin{remark}\hfill
		\begin{itemize}
			\item \oldtext{Our assumption of finite dimensionality} \newtext{Finite dimensionality of $X$ and $Y$} accounts for what is referred to as `Assumption 3' in \cite{Ochs2017} and is some minimal condition which ensures that the limit is also a stationary point of our iteration (Equations \eqref{xstep}-\eqref{ystep}).
			\item \newtext{The condition for finite dimensionality, as we shall see, does not directly relate to the non-convexity. The difficulty of showing convergence to critical points in infinite dimensions is common across both convex \cite{Chambolle2011} and non-convex \cite{iPalm,Ochs2017} optimisation.}
		\end{itemize} 
	\end{remark}

	\begin{proof}
	First we recall that, in finite dimensional spaces, convex functions are continuous on the relative interior of their domain \cite{Duchi2017}. Also note that by our \oldtext{definition} \newtext{choice} of $\tau_x$ \newtext{in Lemma \ref{critical convergence}}, for all $x,x',y$ we have
	\begin{align*}
	|g(J(x,y)+\nabla_x J(x,y)(x'-x)) &- g(J(x',y))|
	\\&\leq |g([J(x,y)-J(x',y)]-\nabla_x J(x,y)(x'-x))| 
	\\&= |g(\nabla_x J(\xi,y)(x'-x)-\nabla_x J(x,y)(x'-x))|
	\\&\leq \tau_x\norm{\xi-x}_X\norm{x'-x}_X
	\\&\leq \tau_x\norm{x'-x}_X^2
	\end{align*}
	where existence of such $\xi$ is given by the Mean Value theorem. Hence, for all $x$ we have
	\begin{align*}
		E(x_{n+1},y_n) & = f_{n+1,n} + g(J_{n+1,n})
		\\&\leq f_{n+1,n} + g(J_{n,n}+\nabla_xJ_{n,n}(x_{n+1}-x_n)) + \tau_x\norm{x_{n+1}-x_n}_X^2 
		\\&\leq f(x,y_n)+g(J_{n,n}+\nabla_xJ_{n,n}(x-x_n))+\tau_x\norm{x-x_n}_X^2
		\\&\leq f(x,y_n)+g(J(x,y_n))+2\tau_x\norm{x-x_n}_X^2 
		\\&= E(x,y_n) + 2\tau_x\norm{x-x_n}_X^2 
	\end{align*}
	where the first and third inequality are due to the condition shown above and the second is due to the definition of $x_{n+1}$ in \eqref{xstep}. Finally, by continuity of $f$, $J$ and $g$ we can take limits on both sides of this inequality:
	\begin{equation}\label{conv to stationary}
	\implies E(x_*,y_*) \leq E(x,y_*) +2\tau_x\norm{x-x_*}_X^2 \text{ for all } x
	\end{equation}
	This completes the proof for $x_*$ as 
	\[|\partial_xE(x_*,y_*)| = \limsup_{x\to x_*} \max\left(0,\frac{E(x_*,y_*)-E(x,y_*)}{\norm{x_*-x}_X}\right) \leq \limsup 2\tau_x\norm{x-x_*}_X = 0\]
	The proof for $y_*$ follows by symmetry.
	\end{proof}
	\begin{remark}\hfill
		\begin{itemize}
			\item The important line in this proof, and where we need finite dimensionality, is being able to pass to the limit for \eqref{conv to stationary}. In the general case we can only expect to have $(x_n,y_n)\rightharpoonup (x_*,y_*)$, typically guaranteed by choice of regularisation functionals as in our Theorem \ref{conv thm}. In this reduced setting the left hand limit of \eqref{conv to stationary} still remains valid, \[E(x_*,y_*) \leq \liminf E(x_{n+1},y_n) \text{ by weak lower semi-continuity.}\]
			However, on the right hand side we require:
			$$\oldtext{f(x,y_*) + g(J(x,y_*)) + 2\tau_x\norm{x-x_*}_X^2\stackrel{?}\geq \lim f(x,y_n) + g(J(x,y_n)) + 2\tau_x\norm{x-x_n}_X^2}$$
			\newtext{\[ \lim E(x,y_n) + 2\tau_x\norm{x-x_n}_X^2 \leq E(x,y_*) + 2\tau_x\norm{x-x_*}_X^2\]}
			In particular, we already require $\norm{x-x_n}_X$ to be weakly upper semi-continuous. Topologically, this is the statement that weak and norm convergence are equivalent which will not be the case in most practical (infinite dimensional) examples.
			\item \newtext{The properties we derive for $(x_*,y_*)$ are actually slightly stronger than that of Definition \ref{critical point} which only depends on an infinitesimal ball about $(x_*,y_*)$. However, \eqref{conv to stationary} gives us a quantification for the more global optimality of this point. This is seen in Figure \ref{critical_point}.}
		\end{itemize}
	\end{remark}
	\begin{figure}
		\centering
		\begin{tikzpicture}
		\node at (0,0) {\includegraphics[width=.5\textwidth]{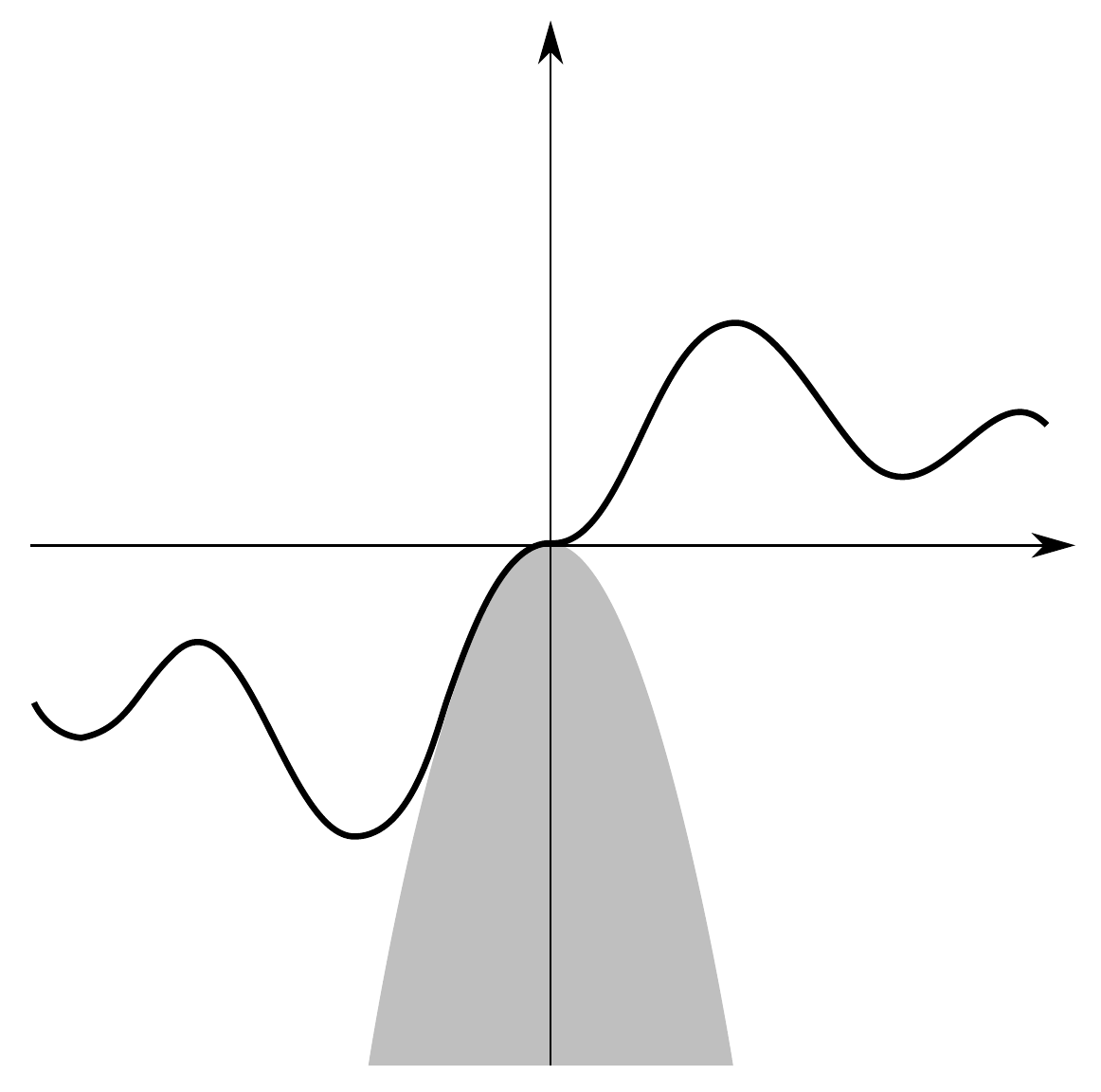}};
		\node at (3,-2.5) {$E(x_*,y_*)-2\tau_x\norm{x-x_*}^2$};
		\node at (4,0) {$x$};
		\node at (1.0,3.0) {$E(x,y_*)$};
		\end{tikzpicture}
		\caption{\oldtext{In general we cannot guarantee that limit points are local minima but we can quantify the speed at which the energy may decay in a neighbourhood. In the case of quadratic penalty functions we have an explicit formula for a supporting cone which bounds the energy from below.} \newtext{Theorem \ref{conv thm} tells us that $(x_*,y_*)$ is a local critical point but does not qualify the globality of the limit point. Equation \eqref{conv to stationary} further allows us to quantify the idea that if a lower energy critical point exists then it must lie far from $(x_*,y_*)$. In particular, it must lie outside of the shaded cone given by the supporting quadratic.}}\label{critical_point}
\end{figure}
	
	\subsection{Proof of Theorem \ref{alternating thm}}
	\begin{proof}
		Fix arbitrary $(x_0,y_0)\in X\times Y$ and $\tau_X,\tau_Y\geq0$. Let $(x_n,y_n)$ be defined as in Algorithm \ref{alg}. By Lemma \ref{global convergence} we know that $\{(x_n,y_n)\st n\in\F N\}$ is contained in a sub-level set of $E$ which in turn must be weakly compact by \eqref{myFunc-start}. The assumption of Lemma \ref{critical convergence} is that we are in a finite dimensional space and so weak compactness is equivalent to norm compactness, i.e. some subsequence of $(x_n,y_n)$ converges in norm. Also by Lemma \ref{critical convergence} we know that the limit point of this sequence must be an appropriate critical point.
	\end{proof}

	\section{Results}
	For numerical results we present two synthetic examples and one experimental dataset from Transmission Electron Tomography. The two synthetic examples are \newtext{discretised at a resolution of $200\times200$ then }simulated using the X-ray transform \newtext{with a parallel beam geometry} sampled at $1^\circ$ intervals over a range of $60^\circ$ \newtext{resulting in a full sinogram of size $287\times180$ and sub-sampled data at $287\times60$}. Gaussian white noise (standard deviation 5\% maximum signal) is then added to give the data. The experimental dataset was acquired with an annular dark field \newtext{(parallel beam)} Scanning TEM modality from which we have 46 projections spaced uniformly in $3^\circ$ intervals over a range of $135^\circ$. \newtext{Because of the geometry of the acquisition, we can treat the original 3D dataset as a stack of 2D and thus extract one of these slices as our example. This 2D dataset is then sub-sampled to 29 projections over $87^\circ$, reducing the size from $173\times45$ to $173\times29$. This results in a reconstruction with $u$ of size $120\times120$ and $v$ of size $173\times180$.} A more detailed description of the acquisition and sample properties of the experimental dataset can be found in \cite{Collins2015}. The code, and data, for all examples is available \footnote{\url{https://github.com/robtovey/2018_Directional_Inpainting_for_Limited_Angle}} under the Creative Commons Attribution (CC BY) license.

	\subsection{Numerical Details}\label{numerical details}
	All numerics are implemented in MATLAB 2016b. The sub-problem for $u$ is solved with a PDHG algorithm \cite{Chambolle2011} while the sub-problem for $v$ is solved using the MOSEK solver via CVX \cite{Mosek,CVX}, the step size $\tau$ is adaptively calculated. The initial point of our algorithm is always chosen to be a good TV reconstruction, i.e. \[u_0 = \argmin_{u\geq 0}\frac12\norm{S\C Ru-b}_2^2 + \lambda\TV(u), \qquad v_0 = \C R u_0\]
	\oldtext{Our construction for the directional regularisation was chosen to be $c_1(\Delta,\Sigma) = 10^{-6}+\frac{\tanh(\Sigma)}{1+(\beta_3\Delta)^2}, c_2(\Delta,\Sigma) = 10^{-6}+\tanh(\Sigma)$ as the property $\partial_\Delta c_1$ is maximally negative for $\Sigma,\Delta\approx0$ which encourages $\Delta$ to grow on regions which are near flat. In particular, this encourages edge continuation in the sinogram when they would otherwise be blurred out. Our synthetic experiments suggested that the parameter choices were relatively intuitive at a fixed resolution and scaling (both synthetic images satisfied $\norm{u}_\infty = 1$) although switching to the synthetic dataset (different resolution, scaling, angular distribution) the optimal parameter values shifted significantly. Despite the large number of tuning parameters, we observed that exact tuning was unnecessary and so most were tuned within a factor of $10^{\pm\sfrac12}$.}
	\newtext{For clarity, we shall restate our full model with all of the parameters it includes. We seek to minimise the functional \eqref{model}:
	\begin{equation*}
	E(u,v) = \frac{1}{2}\norm{\C Ru-v}^2_{\alpha_1} + \frac{\alpha_2}{2}\norm{S\C Ru-b}_2^2 + \frac{\alpha_3}{2}\norm{Sv-b}_2^2 + \beta_1\op{TV}(u) + \beta_2\norm{A_{Ru}\nabla v}_{2,1}
	\end{equation*}
	\vspace{-28pt}
	\begin{align*}
	A_d(x) &= c_1(x|\lambda_1-\lambda_2, \lambda_1+\lambda_2)\vec{e}_1(x)\vec{e}_1(x)^T + c_2(x|\lambda_1-\lambda_2, \lambda_1+\lambda_2)\vec{e}_2(x)\vec{e}_2(x)^T
	\\\text{where } &(\nabla d_\rho\nabla d_\rho^T)_\sigma = \lambda_1 \vec{e}_1\vec{e}_1^T + \lambda_2 \vec{e}_2\vec{e}_2^T \text{ is a pointwise eigenvalue decomposition}
	\\& c_1(x|\Delta,\Sigma) = 10^{-6}+\frac{\tanh(\Sigma(x))}{1+\beta_3\Delta(x)^2},\quad c_2(x|\Delta,\Sigma) = 10^{-6}+\tanh(\Sigma(x))
	\end{align*}
	We chose these particular $c_i$ according to two simple heuristics. If $\Sigma$ is large (steep gradients) then it is likely a region with edges and so the regularisation should be largest but still bounded above. If $\Delta=0^+$ then there is a small or blurred `edge' present and so we want to encourage it to become a sharp jump, i.e. $\partial_\Delta c_1<0$. Theorem \ref{smooth tensor} tells us that choosing $c_i$ as functions of $\Delta^2$ will guarantee accordance with our later convergence results; this leads to our natural choice above. The number of iterations for Algorithm \ref{alg} was chosen to be 200 and 100 for the synthetic and experimental datasets, respectively. To simplify the process of choosing values for the remaining hyper-parameters we made several observations:
	\begin{enumerate}
		\item The choice of $\alpha_i$ and $\beta_i$ appeared to be quite insensitive about the optimum. It is clear within 2-3 iterations whether values are of the correct order of magnitude. After this, values were only tuned coarsely. For example, $\alpha_3$ and $\beta_i$ are optimal within a factor of $10^{\pm\sfrac12}$.
		\item We can chose $\alpha_2=1$ without any loss of generality. In which case, in general, $\beta_1$ should the same order of magnitude as when performing the TV reconstruction to get $u_0, v_0$.
		\item $\alpha_2$ pairs $u$ to the given data and $\alpha_1$ pairs $u$ to the inpainted data, $v$. As such, $\alpha_1$ is spatially varying but should be something like a distance to the non-inpainting region. We chose the binary metric so that $u$ is paired to $v$ uniformly on the inpainting region and not at all outside.
		\item $\DTV$ specific parameters ($\beta_2, \beta_3, \rho, \sigma$) can be chosen outside of the main reconstruction. These were chosen by solving the toy problem: 
		\[\argmin \frac12\norm{v-v_0}_2^2 + \beta_2\norm{A_{v_0}\nabla v}_{2,1}\] which is a lot faster to solve. $\rho>0$ is required for the analysis and so this was fixed at 1. $\sigma$ is a length-scale parameter which indicates the separation between distinct edges. $\beta_3$ relies on the normalisation of the data. As can be seen in Table \ref{param table}, for the two synthetic examples, with same discretisation and scaling, these values are also consistent. The only value which changes is $\beta_2$, as expected, which weights how valid the $\DTV$ prior is for each dataset.
	\end{enumerate}
	It is unclear whether a gridsearch may provide better results although, due to the number of parameters involved, this would definitely take a lot longer and mask some interpretability of the parameters. A further comparison of different choices of the main parameters can be found in the supplementary material.}

	\begin{table}
		\begin{tabular}{|p{120pt}|c|c|c|c|c|c|c|c|}
		\hline 
		& $\alpha_1$ & $\alpha_2$ & $\alpha_3$ & $\beta_1$ & $\beta_2$ & $\beta_3$ & $\rho$ & $\sigma$ \\ 
		\hline 
		Concentric Rings $\qquad$ Phantom & $\frac1{2^2}\1_{\Omega'^c}$ & 1 & $1\times10^{-1}$ & $3\times10^{-5}$ & $3\times10^{3}$ & $10^{10}$ & $1$ & $8$ \\
		\hline 
		Shepp-Logan Phantom & $\frac1{4^2}\1_{\Omega'^c}$ & 1 & $3\times10^{-1}$ & $3\times10^{-5}$ & $3\times10^{2}$ &  $10^{10}$ & $1$ & $8$ \\
		\hline 
		Experimental Dataset (Both sampling ratios) & $\frac1{2^2}\1_{\Omega'^c}$ & 1 & $3\times10^{2}$ & $1\times10^{-5}$ & $3\times10^{1}$ &  $10^6$ & $1$ & $0$ \\
		\hline 
		\end{tabular} 
		\caption{Parameter choices for numerical experiments. \newtext{Each algorithm was run for 300 iterations}}\label{param table}
	\end{table}	

	\subsection{Canonical Synthetic Dataset}
	This example shows two concentric rings. This is the canonical example for our model because the exact sinogram is perfectly radially symmetric which should trivialise the directional inpainting procedure, even with noise present. As can clearly be seen in Figure \ref{donut res}, the TV reconstruction is poor in the missing wedge direction which can be seen as a blurring out of the sinogram. By enforcing better structure in the sinogram, our proposed joint model is capable of extrapolating these local structures from the given data domain to recover the global structure and gives an accurate reconstruction.
	\begin{figure}
		\centering{\includegraphics[width=.95\textwidth,trim={150 80 80 10},clip]{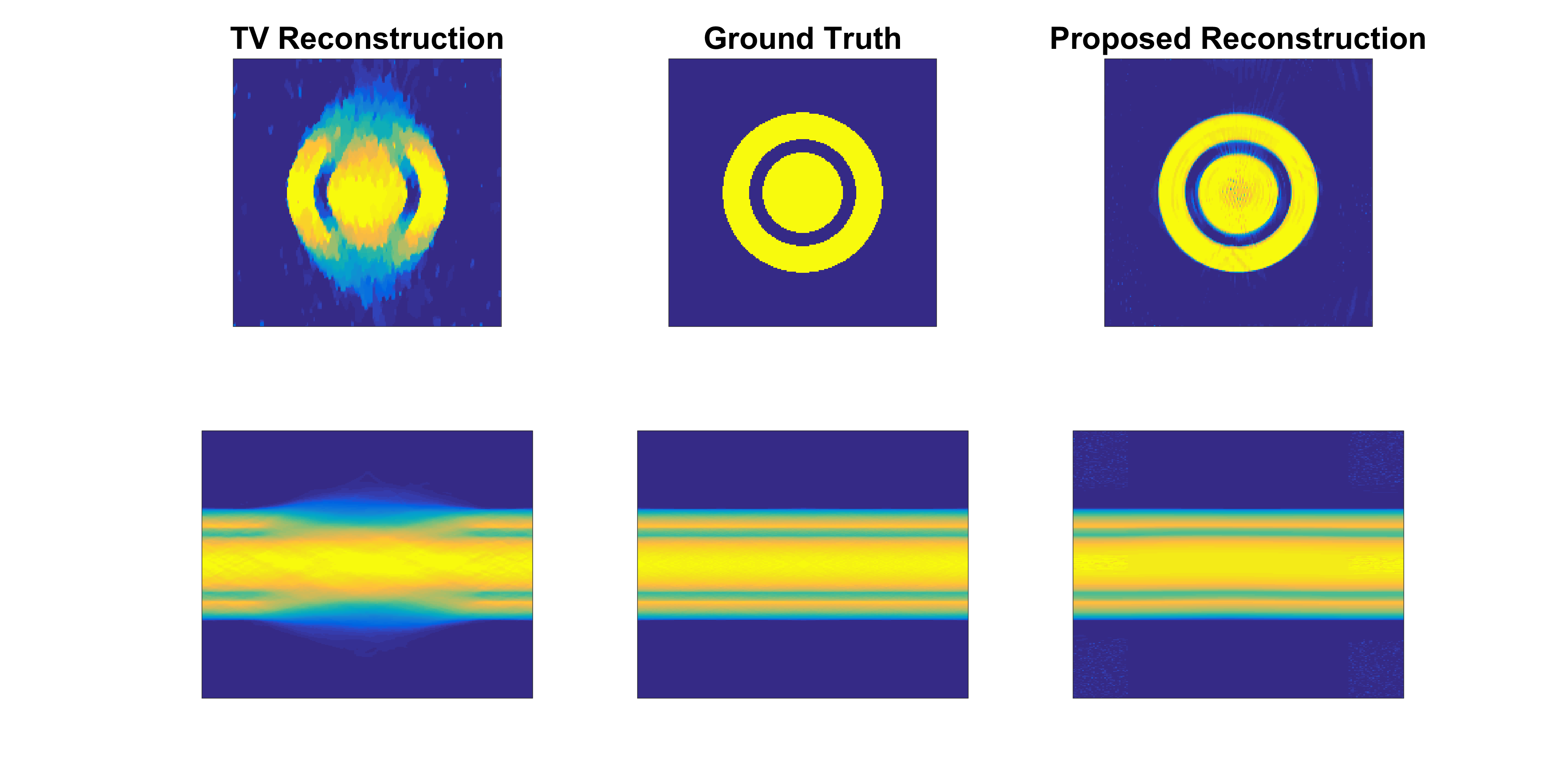}}
		\caption{Canonical synthetic example. Top row shows the reconstructions, $u$, while the bottom row shows the reconstructed sinogram, $v$.}\label{donut res}
	\end{figure}
	\subsection{Non-Trivial Synthetic Dataset}
	This example shows the modified Shepp-Logan phantom which is built up as a sum of ellipses. This example has a much more complex geometry although the sinogram still has a clear geometry. In Figure \ref{shepp res} we see that the largest scale feature, the shape of the largest ellipse, is recovered in our proposed reconstruction with minimal loss of contrast in the interior. One artefact we have not been able to remove is the two rays extending from the top of the reconstructed sample. Looking more closely we found that it was due to a small misalignment of the edge at the bottom of the sinogram as it crosses between the data to the inpainting region. Numerically, this happens because of the convolutions which take place inside the directional TV regularisation functional. Having a non-zero blurring is essential for regularity of the regularisation (Theorem \ref{smooth tensor}) but the effect of this is that it does not heavily penalise misalignment on such a small scale. This means that at the interface between the fixed data-term there is a slight kink, the line is continuous but not $C^1$. The effect of this on the reconstruction is the two lines which extend from the sample at this point. \newtext{Looking at quantitative measures, the PSNR value rises from 17.33 to 17.36 whereas the SSIM decreases from 0.76 to 0.62, from TV to the proposed reconstruction, respectively. These measures are inconclusive and the authors feel that they fail to balance the improvement to global geometry verses more local artefacts in the reconstructions.}	
	\begin{figure}
		\centering{\includegraphics[width=.95\textwidth,trim={150 80 55 10},clip]{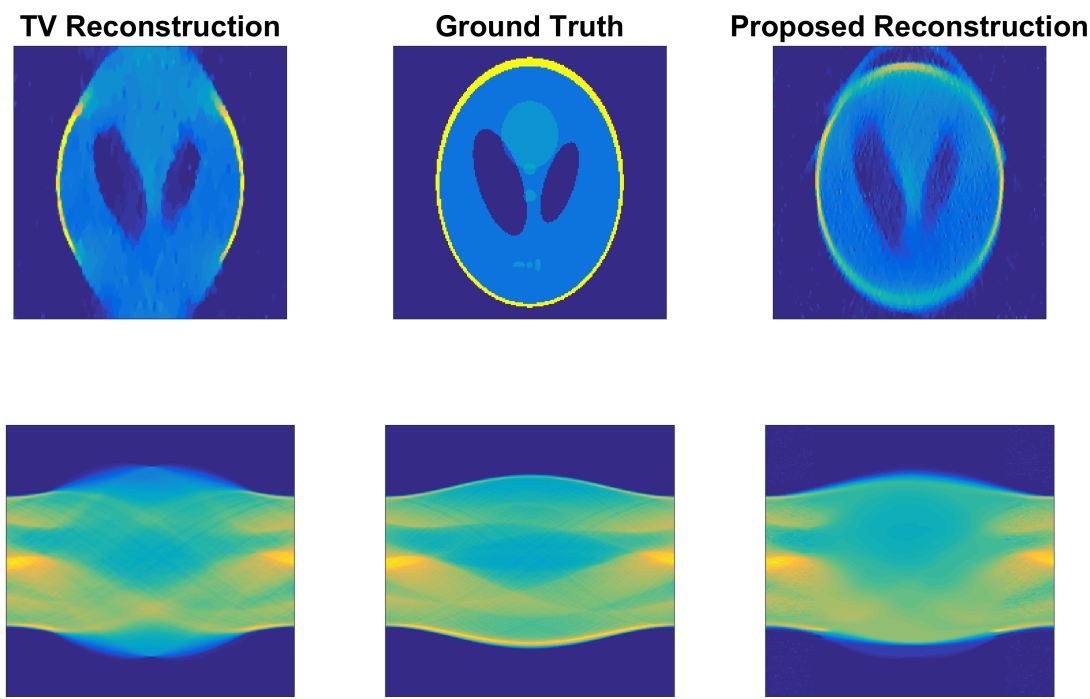}}
		\caption{Non-trivial synthetic example of the modified Shepp-Logan phantom. Top row shows the reconstructions, $u$, while the bottom row shows the reconstructed sinogram, $v$. We regain the large-scale geometry of the shape without losing much of the interior features.}\label{shepp res}
	\end{figure}
	
	\subsection{Experimental Dataset}
	\begin{figure}
		\begin{tikzpicture}
		\node at (0,0) {\includegraphics[width=.6\textwidth,trim={100 60 80 15},clip]{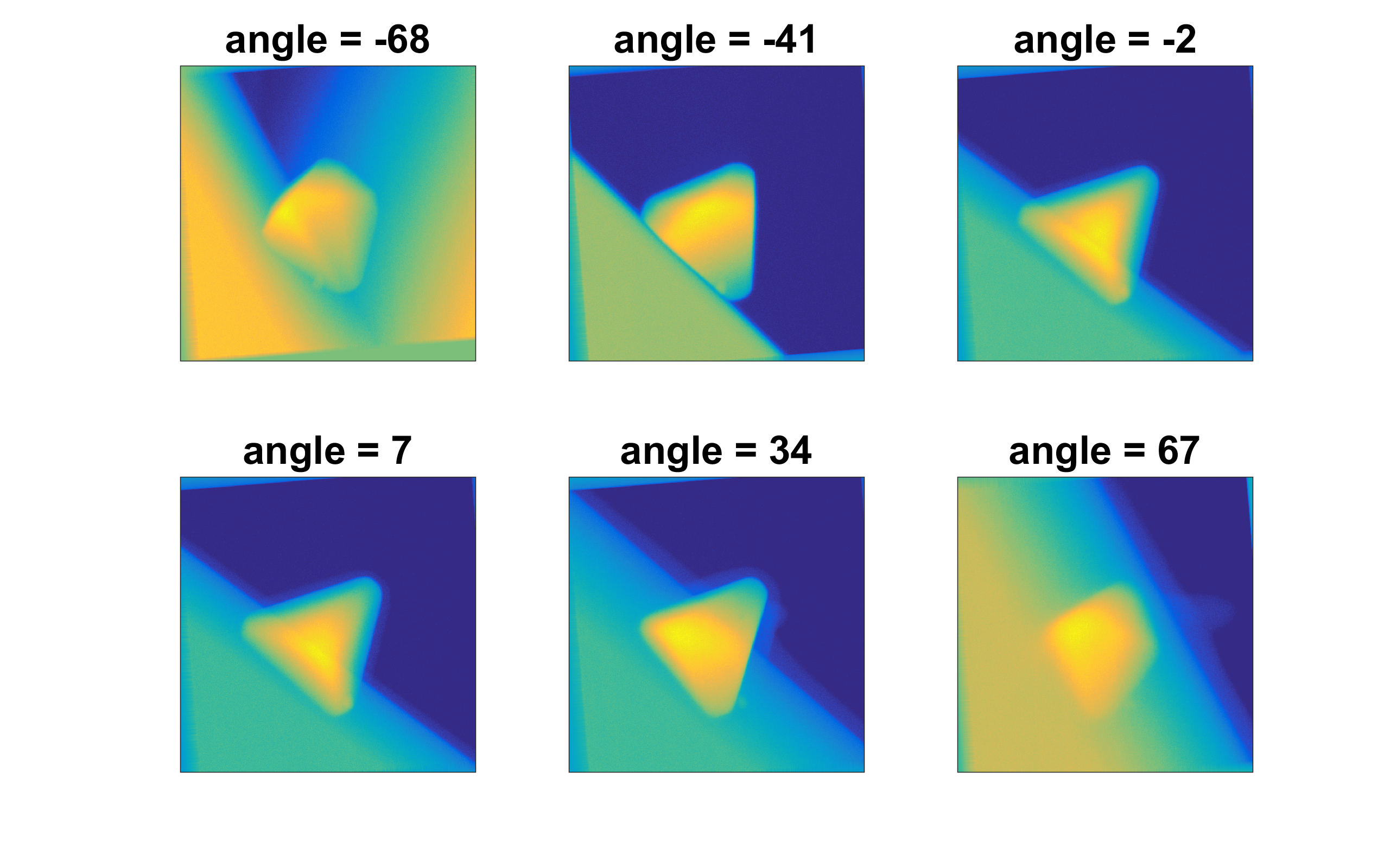}};
		\node at (.5\textwidth,0) {\includegraphics[width=.29\textwidth,trim={50 90 30 0},clip]{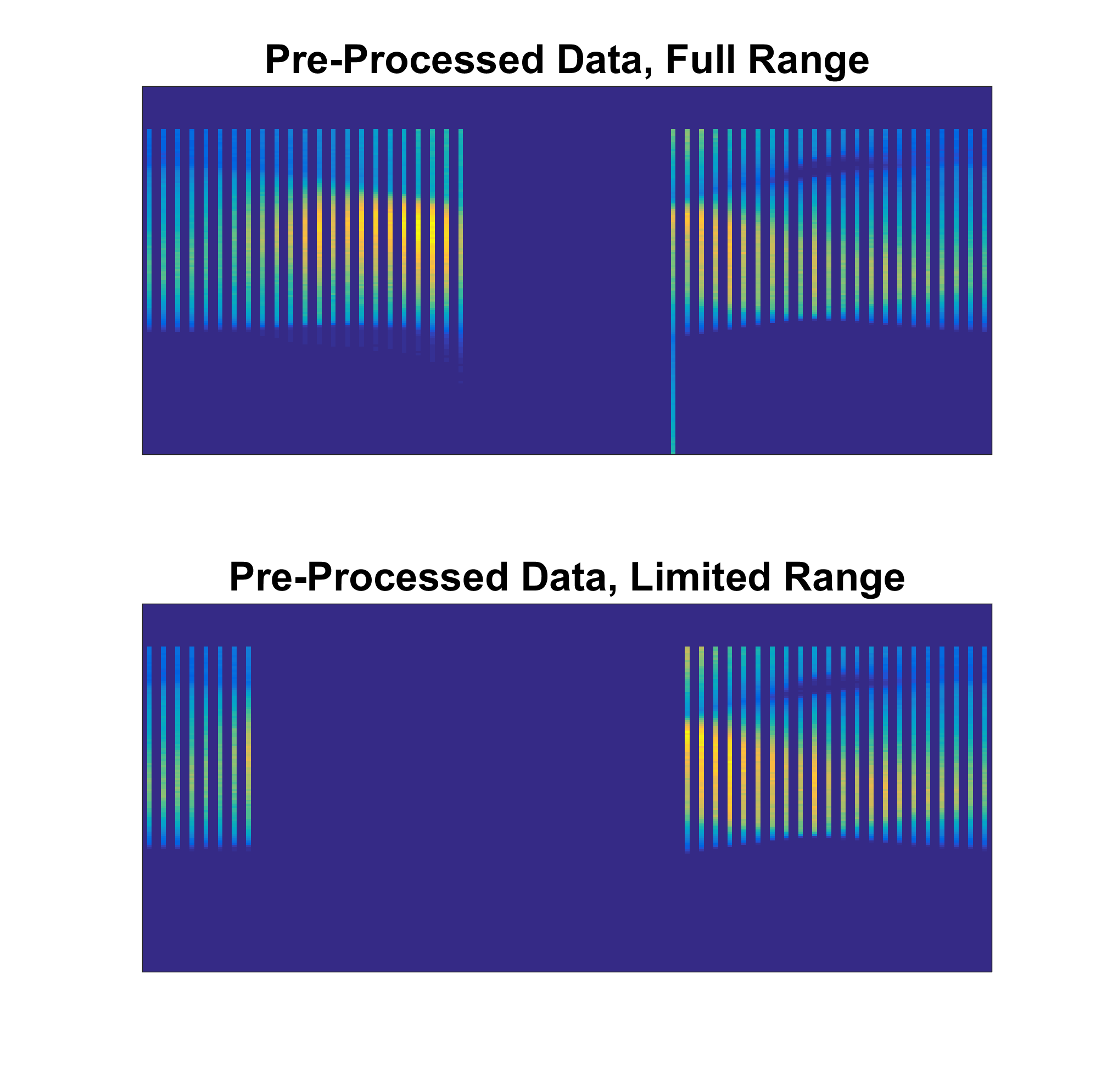}};
		\draw [->,color=red] (1,-1.2) -- (.6,-1.7);
		\draw [->,color=red] (4.3,-1.2) -- (4.2,-1.7);
		\end{tikzpicture}
		\caption{Raw data for Transmission Electron microscopy example. Projections at large angles, e.g. $-68^\circ$,  \oldtext{contain views of supporting grid as was described in Figure2 suggesting that a more limited angle acquisition is desirable. The plane surface will also cause artefacts in the reconstruction as it continues outside the field of view and so more of it is visible in some projections than others. This violates the X-ray modelling assumption that outside the field of view is vacuum. } \newtext{show the presence of the sample holder which violates the X-ray modelling assumption that outside of the region of interest is vacuum. If the violation is too extreme then this can cause strong artefacts in reconstructions and so the common action is to discard such data. The plane surface also violates this model but is relatively weak at low angles and so will cause weaker artefacts.} A source of noise in this acquisition is that over time the surface becomes coated with carbon. This is first visible as a thin film at $-2^\circ$ and progressively gets thicker through the remaining projections. At $34^\circ$ we see a bump of carbon appear on the top right edge. \oldtext{Because of these factors, there is  both a limited angle pressure and pressure for only few projections.} After pre-processing, \oldtext{we have the full dataset } \newtext{we extract a 2D slice of all projections to form the full range} as shown top right artificially sub-sample to compare TV with our proposed reconstruction method.}
		\label{real data}
	\end{figure}
	\begin{figure}
		\centering\includegraphics[width=.8\textwidth,trim={120 50 55 10},clip]{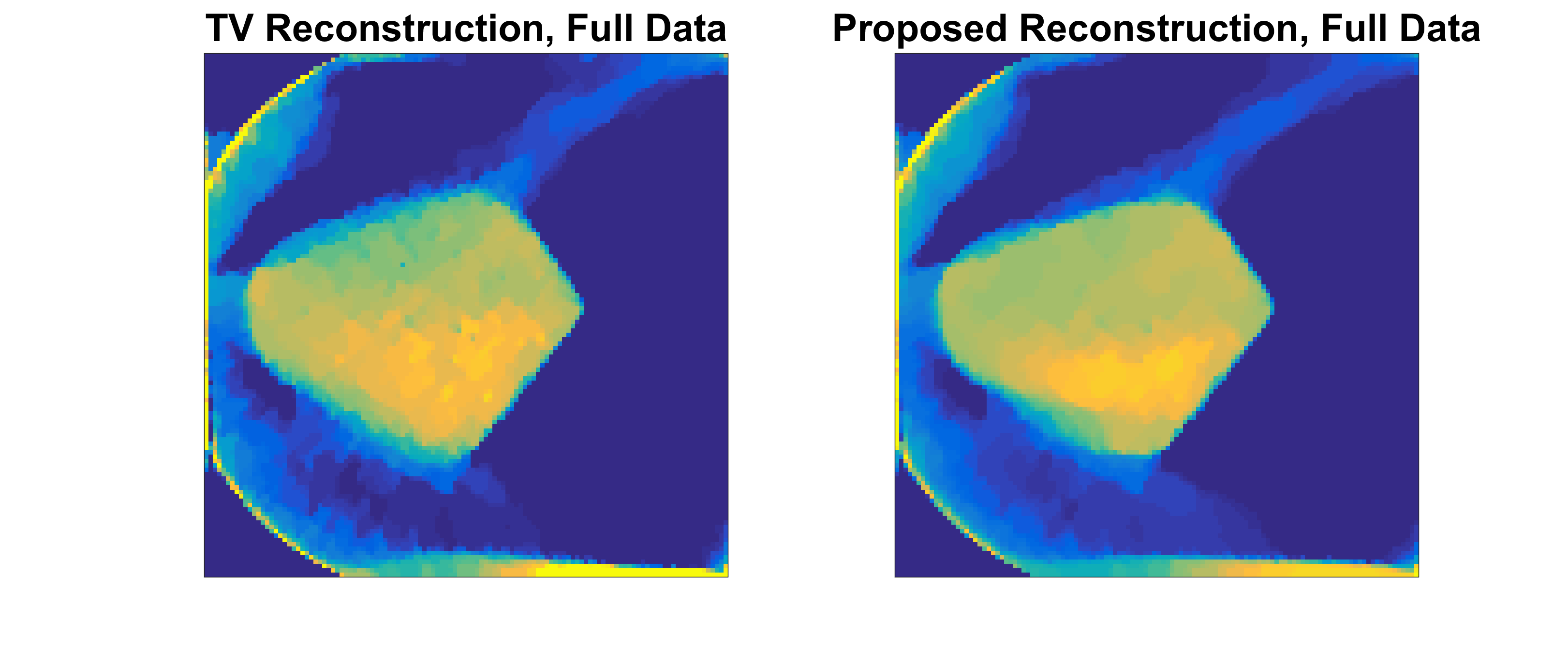}
		\centering\includegraphics[width=.8\textwidth,trim={120 50 55 10},clip]{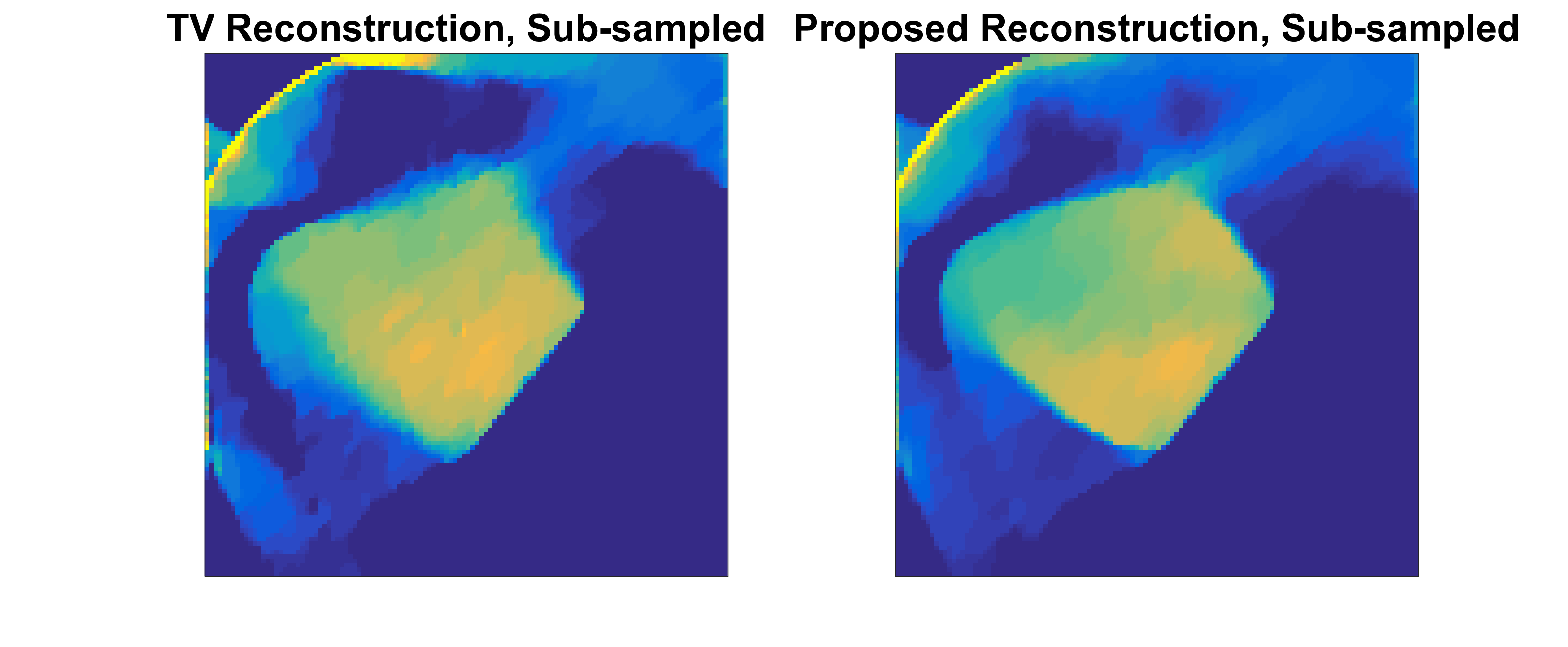}
		\caption{Reconstructions from a slice of the experimental data. We have chosen the slice half-way down through the projections shown in Figure \ref{real data} to coincide with one of the rounded corners. The arc artefact was an anticipated consequence due to the out-of-view mass, the pre-processing has simply reduced the intensity. Proposed reconstructions consistently show better homogeneity inside the particle and sharper boundaries. The missing angles direction is the bottom-left to top-right diagonal where we see most error in each reconstruction, in particular, the blurring of the top right corner of the particle is a limited angle artefact.}
		\label{real TV}
	\end{figure}
	\begin{figure}
		\centering\includegraphics[width=.8\textwidth,trim={100 10 30 10},clip]{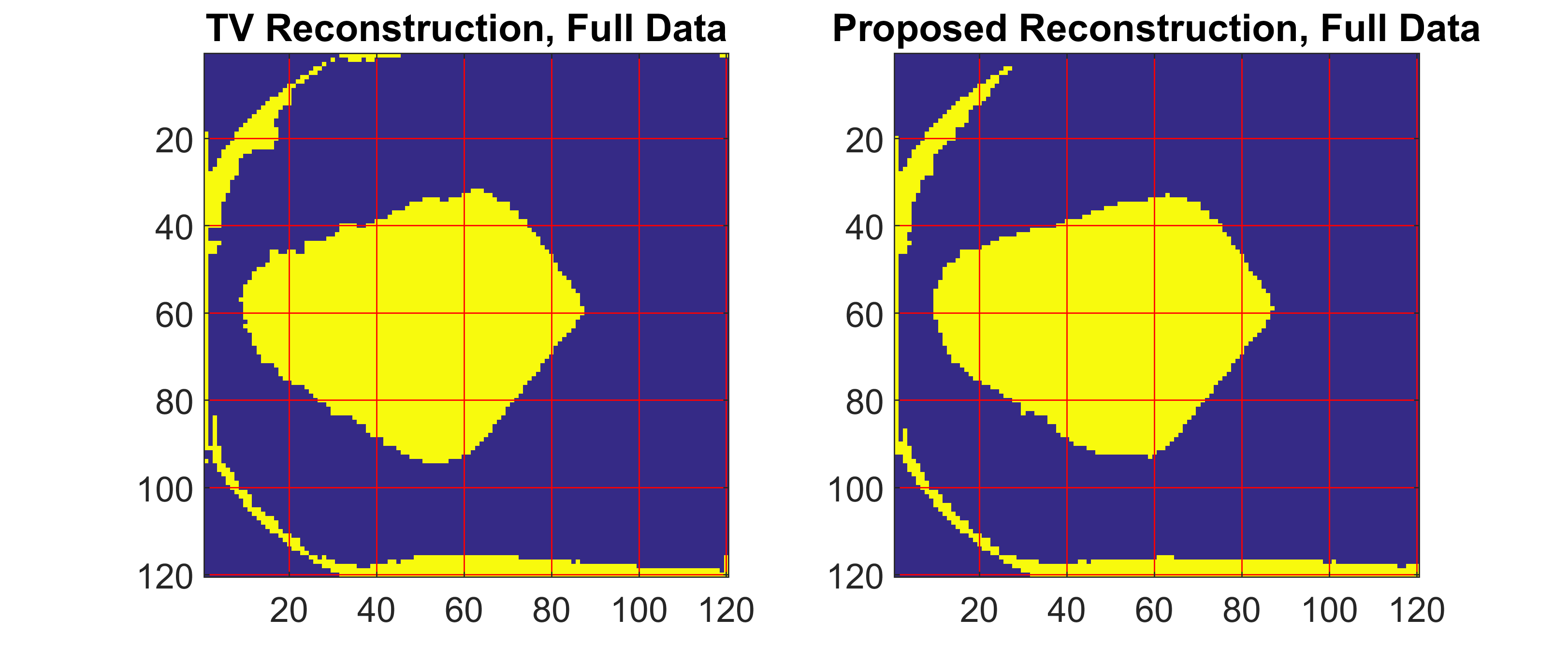}
		\centering\includegraphics[width=.8\textwidth,trim={100 10 30 10},clip]{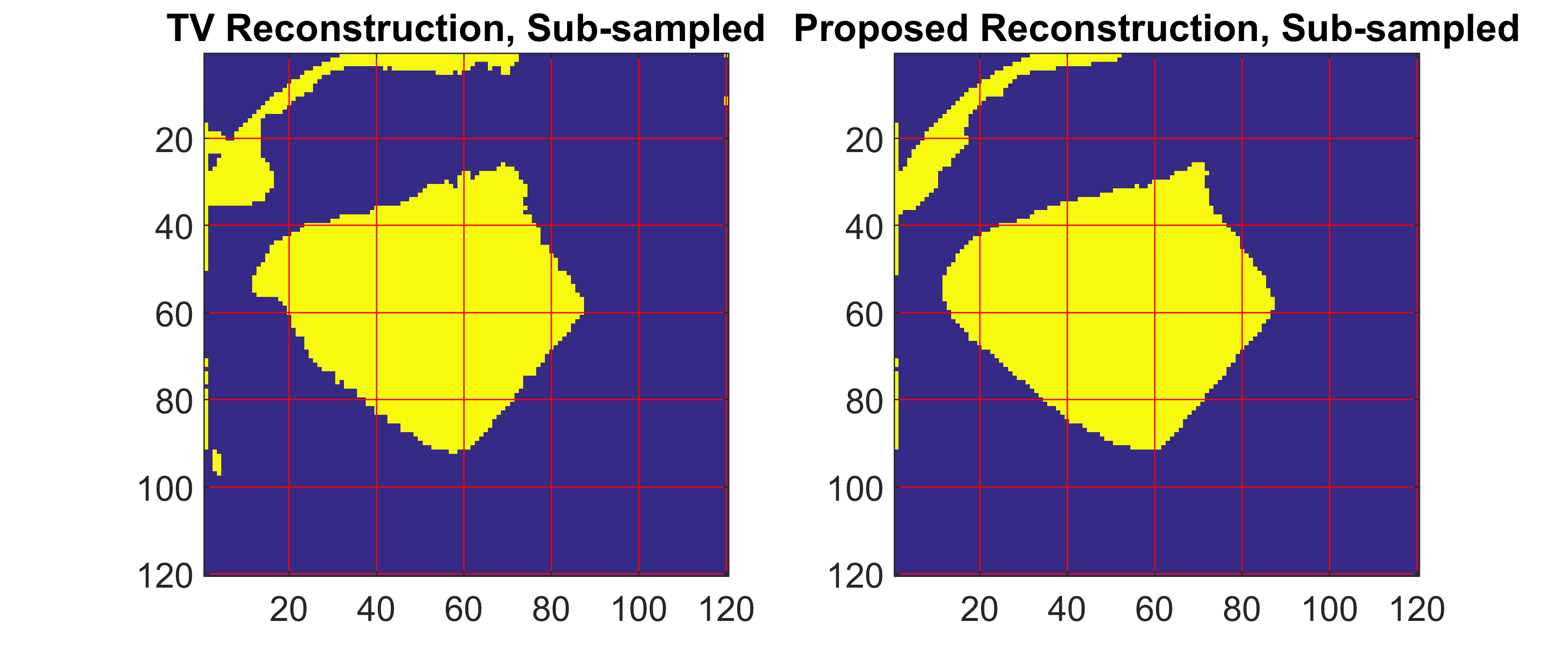}
		\caption{Comparison between each reconstruction after thresholding. The geometrical properties of interest are that each edge should be linear, the left hand corner is rounded and the remaining corners are not. The particle of interest is homogeneous so thresholding the images should emphasise this in a way which is very unsympathetic to blurred edges. Again, the top right corner of each particle in the sub-sampled reconstructions coincides with the exacerbated missing wedge direction and so we expect each reconstruction to make some error here.}
		\label{real comp}
	\end{figure}

	The sample is a silver bipyramidal crystal placed on a planar surface, and the challenges of this dataset are shown in Figure \ref{real data}. 
	\oldtext{We immediately see that the wide angle projections have large artefacts from the planar surface which masks the sample at these high angles. This suggests that a more limited angle acquisition is desirable. Another artefact is that over time each surface becomes coated with carbon. This is a necessary consequence of the sample preparation and this coating is known to occur during the microscopy. The result of these two factors is that we find a pressure to use both a limited angular range and the earliest acquired projections. Because of this, in numerical experiments we compare both TV and our proposed reconstruction using only $\sfrac34$ of the available data, 30 projections over an $87^\circ$ interval, with a bias towards earlier projections. The final artefact in the data is that there is mass seen in some of the projections which cannot be represented within the reconstruction volume. This breaks the X-ray model assumption that the field of view is surrounded by a vacuum and thus will also lead to reconstruction artefacts. To minimise this problem we shall perform preliminary background subtraction which will minimise the total mass contributing to this violation. Effectively, we pretend our detector was narrower such that we have dataonly in a small neighbourhood of the region of interest.}
	\newtext{We immediately see that the wide angle projections have large artefacts which produces a very low signal to noise ratio. Another issue present is that there is mass seen in some of the projections which cannot be represented within the reconstruction volume. Both of these issues violate the simple X-ray model that is used. Exact modelling would require estimation of parameters which are not available a priori and so the preferred acquisition is one which automatically minimises these modelling errors. Another artefact is that over time each surface becomes coated with carbon. This is a necessary consequence of the sample preparation and this coating is known to occur during the microscopy. The result of modelling errors and time dependent noise is to prefer an acquisition with limited angular range and earliest acquired projections. Because of this, in numerical experiments we compare both TV and our proposed reconstruction using only $\sfrac34$ of the available data, 29 projections over an $87^\circ$ interval, with a bias towards earlier projections. The artefacts due to the out-of-view mass are unavoidable but we can perform some further pre-processing to minimise the effect. In particular, if we shrink the field of view of the detector then the `heaviest' part of the data will be the particle of interest and the model violations will be relatively small, increasing the signal to noise ratio.}
	This can be seen as the sharp horizontal cut-off in the pre-processed sinograms seen on the right of Figure \ref{real data}. The effect of this on the reconstruction is going to be that there is a thin ring of mass placed at the edge of the \newtext{(shrunken)} detector view which will be clearly identifiable in the reconstruction. As a ground truth approximation we shall use a TV reconstruction on the full data for the location of the particle alongside prior knowledge of this sample for more precise geometrical features. We also note that the particle should be very homogeneous so this is another example where we expect the TV reconstruction to be very good. 
	
	The sample is a single crystal of silver and so we know it must have very uniform intensity and we are interested in locating the sharp facets which bound the crystal \cite{Collins2015}. In Figure \ref{real TV} we immediately see that the combination of homogeneity and sharp edges is better reconstructed in our proposed reconstruction. \oldtext{In Figure \ref{real comp} we show the thresholded images to visually emphasise the geometrical properties of each reconstruction. This is particularly helpful when locating boundaries or estimating interior angles of the particle.} \newtext{Because we expect the reconstruction to be constant on the background and the particle, thresholding the reconstruction allows us to easily locate the boundaries and estimate interior angles of the particle. Figure \ref{real comp} shows such images where the threshold is chosen to be the approximate midpoint of these two levels.} We see that the proposed reconstruction consistently has less jagged edges and the left hand corner is better curved, as is consistent with our knowledge of the sample. Looking back at the full colour images we see that this is a result of lack of sharp decay at the boundary and homogeneity inside the sample. Looking for location error we see the biggest error in both TV and joint reconstruction is on the bottom-left edge where both reconstructions pull the line inwards. However, looking particularly at points $(40,80)$ and $(20,60)$, we see that this was less severe in the proposed method. The other missing wedge artefact is in the top right corner which has been extended in both reconstructions although it is thinner in the proposed reconstruction. This indicates that it was better able to continue the straight edges either side of the corner and the blurring in the missing wedge direction is more localised than in the TV reconstruction. Overall, we see see that the proposed reconstruction method is much more robust to an decrease in angular sampling range.

	\section{Conclusions and Outlook}
	In this paper we have presented a novel method for tomographic reconstructions in a limited angle scenario along with a numerical algorithm with convergence guarantees. We have also tested our approach on synthetic and experimental data and shown consistent improvement over alternative reconstruction methods. Even when the X-ray transform model is noticeably violated, as with our experimental data, we still better recover boundaries of the reconstructed sample.

	There are three main directions which could be explored in future. Firstly, we think there is great potential to apply our framework to other applications, such as in tomographic imaging with occlusions and heavy metal artefacts where the inpainting region is much smaller \cite{Kostler2006, Zhang2011}. Secondly, we would like to find an alternative numerical algorithm with either faster practical convergence or one which is more capable of avoiding local minima in this non-convex landscape. 
	\oldtext{Finally, we would like to explore the potential for an alternative regularisation functional on the sinogram which is better able to encode structure of the X-ray transform. At the moment we treat sinograms as simple functions on $\F R^2$ ignoring the fact that it must also lie in the image of $\C R$. The discrepancy between the two is seen in Figure \ref{shepp res} as the reconstruction with our proposed method introduces an artefact due to the model mismatch with the X-ray transform. The question of forming regularisation functions consistent with the global structure of a sinogram naturally leads us to the field of micro-local analysis. From this viewpoint the inpainting problem is separated into what are known as visible and invisible singularities; formalising the idea that edges at particular orientations are `visible' but noisy in the limited angle data while the rest must be inpainted. Ideally, our two regularisers would operate independently on just the visible and invisible domains respectively whereas, at the moment, this separation cannot be made.}
	\newtext{Finally, we would like to explore the potential for an alternative regularisation functional on the sinogram which is better able to treat visible and invisible singularities, denoising and inpainting problems, independently. At the moment, the TV prior alone can reconstruct visible singularities well however, introducing a sinogram regulariser currently improves on the invisible region at the expensive of damaging the visible.} Overall, we feel that this presents the natural progression for the current work although it remains unclear how to regularise these invisible singularities.
	
	\section*{Acknowledgements}
	RT would like to thank Ozan \"Oktem for valuable discussion on the application of micro-local analysis in this setting. RT acknowledges support from the EPSRC grant EP/L016516/1 for the University of Cambridge Centre for Doctoral Training, the Cambridge Centre for Analysis. MB acknowledges support from the Leverhulme Trust Early Career Fellowship Learning from mistakes: `A supervised feedback-loop for imaging applications', and the Isaac Newton Trust. CBS acknowledges support from the Leverhulme Trust project on `Breaking the non-convexity barrier', EPSRC grant Nr. EP/M00483X/1, the EPSRC Centre Nr. EP/N014588/1, the RISE projects CHiPS and NoMADS, and the Alan Turing Institute. CBS, MB and RT also acknowledge the Cantab Capital Institute for the Mathematics of Information. \newtext{CB acknowledges support from the EU RISE project NoMADS, the PIHC innovation fund of the Technical Medical Centre of UT and the Dutch 4TU programme Precision Medicine.} MJL acknowledges financial support from the Netherlands Organization for Scientific Research (NWO), project 639.073.506. SMC acknowledges support from the Henslow Research Fellowship at Girton College, Cambridge. RKL acknowledges support from a Clare College Junior Research Fellowship. PAM acknowledges EPSRC grant EP/R008779/1.
	
	\nocite{Fanelli2008}
	\bibliography{library}

\begin{thebibliography}{10}

\bibitem{Ehrhardt2015}
Matthias~J. Ehrhardt, Kris Thielemans, Luis Pizarro, David Atkinson,
  S{\'{e}}bastien Ourselin, Brian~F. Hutton, and Simon~R. Arridge.
\newblock {Joint reconstruction of PET-MRI by exploiting structural
  similarity}.
\newblock {\em Inverse Problems}, 31(1):015001, 2015.

\bibitem{Leary2013}
Rowan~K. Leary, Zineb Saghi, Paul~A. Midgley, and Daniel~J. Holland.
\newblock {Compressed sensing electron tomography}.
\newblock {\em Ultramicroscopy}, 131:70--91, aug 2013.

\bibitem{Kubel2005}
Christian K{\"{u}}bel, Andreas Voigt, Remco Schoenmakers, Max Otten, David Su,
  Tan-Chen Lee, Anna Carlsson, and John Bradley.
\newblock {Recent advances in electron tomography: TEM and HAADF-STEM
  tomography for materials science and semiconductor applications.}
\newblock {\em Microscopy and Microanalysis}, 11(5):378--400, oct 2005.

\bibitem{Zhao2013}
Gongpu Zhao, Juan~R. Perilla, Ernest~L. Yufenyuy, Xin Meng, Bo~Chen, Jiying
  Ning, Jinwoo Ahn, Angela~M. Gronenborn, Klaus Schulten, Christopher Aiken,
  and Peijun Zhang.
\newblock {Mature HIV-1 capsid structure by cryo-electron microscopy and
  all-atom molecular dynamics}.
\newblock {\em Nature}, 497(7451):643--646, may 2013.

\bibitem{Kalender2006}
Willi~A. Kalender.
\newblock {X-ray computed tomography}.
\newblock {\em Physics in Medicine and Biology}, 51(13):29--43, jul 2006.

\bibitem{Cnudde2013}
Veerle Cnudde and Matthieu~Nicolaas Boone.
\newblock {High-resolution X-ray computed tomography in geosciences: A review
  of the current technology and applications}.
\newblock {\em Earth-Science Reviews}, 123:1--17, aug 2013.

\bibitem{Kawase2007}
Noboru Kawase, Mitsuro Kato, Hideo Nishioka, and Hiroshi Jinnai.
\newblock {Transmission electron microtomography without the ``missing wedge''
  for quantitative structural analysis}.
\newblock {\em Ultramicroscopy}, 107(1):8--15, jan 2007.

\bibitem{Zhang2006}
Yiheng Zhang, Heang~Ping Chan, Berkman Sahiner, Jun Wei, Mitchell~M. Goodsitt,
  Lubomir~M. Hadjiiski, Jun Ge, and Chuan Zhou.
\newblock {A comparative study of limited-angle cone-beam reconstruction
  methods for breast tomosynthesis}.
\newblock {\em Medical Physics}, 33(10):3781--3795, sep 2006.

\bibitem{Quinto1993}
E~T Quinto.
\newblock {Singularities of the X-ray transform and limited data tomography in
  {\{}R{\^{}}2{\}} and {\{}R{\^{}}3{\}}}.
\newblock {\em SIAM Journal on Mathematical Analysis}, 24(5):1215--1225, 1993.

\bibitem{Frikel2013a}
J{\"{u}}rgen Frikel and Eric~Todd Quinto.
\newblock {Characterization and reduction of artifacts in limited angle
  tomography}.
\newblock {\em Inverse Problems}, 29(12):21, 2013.

\bibitem{Katsevich1997}
Alexander~I. Katsevich.
\newblock {Local tomography for the limited-angle problem}.
\newblock {\em Journal of Mathematical Analysis and Applications},
  213(1):160--182, 1997.

\bibitem{Feldkamp1984}
Lee~A. Feldkamp, L.~C. Davis, and James~W. Kress.
\newblock {Practical cone-beam algorithm}.
\newblock {\em Journal of the Optical Society of America A}, 1(6):612, jun
  1984.

\bibitem{Flohr2003}
Thomas Flohr, Karl Stierstorfer, Herbert Bruder, J.~Simon, Arkadiusz Polacin,
  and Stefan Schaller.
\newblock {Image reconstruction and image quality evaluation for a 16-slice CT
  scanner}.
\newblock {\em Medical Physics}, 30(5):832--845, apr 2003.

\bibitem{Tang2006}
Xiangyang Tang, Jiang Hsieh, Roy~A. Nilsen, Sandeep Dutta, Dmitry Samsonov, and
  Akira Hagiwara.
\newblock {A three-dimensional-weighted cone beam filtered backprojection
  (CB-FBP) algorithm for image reconstruction in volumetric CT-helical
  scanning}.
\newblock {\em Physics in Medicine and Biology}, 51(4):855--874, feb 2006.

\bibitem{Gilbert1972}
Peter Gilbert.
\newblock {Iterative methods for the three-dimensional reconstruction of an
  object from projections}.
\newblock {\em Journal of Theoretical Biology}, 36(1):105--117, jul 1972.

\bibitem{Agulleiro2010}
Jose.~I. Agulleiro, Eduardo.~M. Garz{\'{o}}n, Inmaculada Garc{\'{i}}a, and
  J.~J. Fern{\'{a}}ndez.
\newblock {Vectorization with SIMD extensions speeds up reconstruction in
  electron tomography}.
\newblock {\em Journal of Structural Biology}, 170(3):570--575, 2010.

\bibitem{Spitzbarth2015}
Martin Spitzbarth and Malte Drescher.
\newblock {Simultaneous iterative reconstruction technique software for
  spectral-spatial EPR imaging}.
\newblock {\em Journal of Magnetic Resonance}, 257:79--88, aug 2015.

\bibitem{Goris2012}
Bart Goris, Wouter {Van den Broek}, Kees~Joost Batenburg, Hamed {Heidari
  Mezerji}, and Sara Bals.
\newblock {Electron tomography based on a total variation minimization
  reconstruction technique}.
\newblock {\em Ultramicroscopy}, 113:120--130, feb 2012.

\bibitem{Chen2013}
Zhiqiang Chen, Xin Jin, Liang Li, and Ge~Wang.
\newblock {A limited-angle CT reconstruction method based on anisotropic TV
  minimization}.
\newblock {\em Physics in medicine and biology}, 58(7):2119--41, apr 2013.

\bibitem{Gu2017}
Jawook Gu and Jong~Chul Ye.
\newblock {Multi-Scale Wavelet Domain Residual Learning for Limited-Angle CT
  Reconstruction}.
\newblock {\em arXiv submission}, 2017.

\bibitem{Hammernik2017}
Kerstin Hammernik, Tobias W{\"{u}}rfl, Thomas Pock, and Andreas Maier.
\newblock {A Deep Learning Architecture for Limited-Angle Computed Tomography
  Reconstruction}.
\newblock In {\em Bildverarbeitung f{\"{u}}r die Medizin}, pages 92--97.
  Springer Vieweg, Berlin, Heidelberg, 2017.

\bibitem{Kostler2006}
Harald K{\"{o}}stler, Michael Pr{\"{u}}mmer, Ulrich R{\"{u}}de, and Joachim
  Hornegger.
\newblock {Adaptive variational sinogram interpolation of sparsely sampled CT
  data}.
\newblock In {\em Proceedings - International Conference on Pattern
  Recognition}, volume~3, pages 778--781. IEEE, 2006.

\bibitem{Zhang2011}
Yi~Zhang, Yi~Fei Pu, Jin~Rong Hu, Yan Liu, and Ji~Liu Zhou.
\newblock {A new CT metal artifacts reduction algorithm based on
  fractional-order sinogram inpainting}.
\newblock {\em Journal of X-Ray Science and Technology}, 19(3):373--384, 2011.

\bibitem{Li2014}
Si~Li, Qing Cao, Yang Chen, Yining Hu, Limin Luo, and Christine Toumoulin.
\newblock {Dictionary learning based sinogram inpainting for CT sparse
  reconstruction}.
\newblock {\em Optik}, 125(12):2862--2867, 2014.

\bibitem{Gu2006}
Jianwei Gu, Li~Zhang, Guoqiang Yu, Yuxiang Xing, and Zhiqiang Chen.
\newblock {X-ray CT metal artifacts reduction through curvature based sinogram
  inpainting}.
\newblock {\em Journal of X-ray Science and Technology}, 14(2):73--82, 2006.

\bibitem{Zhang2017}
Hanming Zhang, Linyuan Wang, Yuping Duan, Lei Li, Guoen Hu, and Bin Yan.
\newblock {Euler's Elastica Strategy for Limited-Angle Computed Tomography
  Image Reconstruction}.
\newblock {\em IEEE Transactions on Nuclear Science}, 64(8):2395--2405, 2017.

\bibitem{Kalke2014}
Martti Kalke and Samuli Siltanen.
\newblock {Sinogram Interpolation Method for Sparse-Angle Tomography}.
\newblock {\em Applied Mathematics}, 05(03):423--441, 2014.

\bibitem{Thirion1991}
Jean-Philippe Thirion.
\newblock {A Geometric Alternative to Computed Tomography}.
\newblock In {\em Engineering in Medicine and Biology Society}, volume~13,
  page~34, 1991.

\bibitem{Burger2014}
Martin Burger, Jahn M{\"{u}}ller, Evangelos Papoutsellis, and Carola-Bibiane
  Sch{\"{o}}nlieb.
\newblock {Total Variation Regularisation in Measurement and Image Space for
  PET reconstruction}.
\newblock {\em Inverse Problems}, 30(10), oct 2014.

\bibitem{Dong2013}
Bin Dong, Jia Li, and Zuowei Shen.
\newblock {X-ray CT image reconstruction via wavelet frame based regularization
  and Radon domain inpainting}.
\newblock {\em Journal of Scientific Computing}, 54(2-3):333--349, feb 2013.

\bibitem{Ochs2017}
Peter Ochs, Jalal~M. Fadili, and Thomas Brox.
\newblock {Non-smooth Non-convex Bregman Minimization: Unification and new
  Algorithms}.
\newblock {\em arXiv submission}, 2017.

\bibitem{Beck2009}
Amir Beck and Marc Teboulle.
\newblock {Fast gradient-based algorithms for constrained total variation image
  denoising and deblurring problems}.
\newblock {\em IEEE Transactions on Image Processing}, 18(11):2419--2434, 2009.

\bibitem{Chan2006}
Tony~F. Chan, Selim Esedoglu, and Mila Nikolova.
\newblock {Algorithms for Finding Global Minimizers of Image Segmentation and
  Denoising Models}.
\newblock {\em SIAM Journal on Applied Mathematics}, 66(5):1632--1648, 2006.

\bibitem{Benning2017}
Martin Benning, Michael M{\"{o}}ller, Raz~Z. Nossek, Martin Burger, Daniel
  Cremers, Guy Gilboa, and Carola-Bibiane Sch{\"{o}}nlieb.
\newblock {Nonlinear spectral image fusion}.
\newblock In {\em Scale Space and Variational Methods in Computer Vision},
  volume LNCS 10302, pages 41--53, mar 2017.

\bibitem{Weickert1998}
Joachim Weickert.
\newblock {Anisotropic diffusion in image processing}.
\newblock {\em Image Rochester NY}, 256(3):170, 1998.

\bibitem{Berkels2006}
Benjamin Berkels, Martin Burger, Marc Droske, Oliver Nemitz, and Martin Rumpf.
\newblock {Cartoon extraction based on anisotropic image classification}.
\newblock In {\em Vision, Modeling, and Visualization}, page 293. IOS Press,
  2006.

\bibitem{Estellers2015}
Virginia Estellers, Stefano Soatto, and Xavier Bresson.
\newblock {Adaptive Regularization With the Structure Tensor}.
\newblock {\em IEEE Transactions on Image Processing}, 24(6):1777--1790, jun
  2015.

\bibitem{Bertalmio2000}
Marcelo Bertalmio, Guillermo Sapiro, Vincent Caselles, and Coloma Ballester.
\newblock {Image inpainting}.
\newblock In {\em Proceedings of the 27th annual conference on Computer
  graphics and interactive techniques - SIGGRAPH '00}, pages 417--424, New
  York, New York, USA, 2000. ACM Press.

\bibitem{Chambolle2011}
Antonin Chambolle and Thomas Pock.
\newblock {A first-order primal-dual algorithm for convex problems with
  applications to imaging}.
\newblock {\em Journal of Mathematical Imaging and Vision}, 40(1):120--145, may
  2011.

\bibitem{Mosek}
{Mosek ApS}.
\newblock {The Mosek optimization software}.
\newblock {\em Online at http://www.mosek.com}, 2010.

\bibitem{Drusvyatskiy2016}
Dmitriy Drusvyatskiy and Adrian~S. Lewis.
\newblock {Error bounds, quadratic growth, and linear convergence of proximal
  methods}.
\newblock {\em arXiv submission}, page~23, 2016.

\bibitem{iPalm}
Thomas Pock and Shoham Sabach.
\newblock {Inertial Proximal Alternating Linearized Minimization (iPALM) for
  Nonconvex and Nonsmooth Problems}.
\newblock {\em SIAM Journal on Imaging Sciences}, 9(4):1756--1787, jan 2016.

\bibitem{Liang2016}
Jingwei Liang, Jalal~M. Fadili, and Gabriel Peyr{\'{e}}.
\newblock {A Multi-step Inertial Forward-Backward Splitting Method for
  Non-convex Optimization}.
\newblock {\em Advances in Neural Information Processing Systems}, (29):1--9,
  2016.

\bibitem{iPiano}
Peter Ochs, Yunjin Chen, Thomas Brox, and Thomas Pock.
\newblock {iPiano: Inertial Proximal Algorithm for Non-Convex Optimization}.
\newblock {\em SIAM Journal on Imaging Sciences}, 7(2):1388--1419, apr 2014.

\bibitem{Palm}
J{\'{e}}r{\^{o}}me Bolte, Shoham Sabach, and Marc Teboulle.
\newblock {Proximal alternating linearized minimization for nonconvex and
  nonsmooth problems}.
\newblock {\em Mathematical Programming}, 146(1-2):459--494, 2014.

\bibitem{Drusvyatskiy2016a}
Dmitriy Drusvyatskiy, Alexander~D. Ioffe, and Adrian~S. Lewis.
\newblock {Nonsmooth optimization using Taylor-like models: error bounds,
  convergence, and termination criteria}.
\newblock {\em arXiv submission}, pages 1--21, 2016.

\bibitem{Duchi2017}
John~C. Duchi.
\newblock {Introductory Lectures on Stochastic Population Systems}.
\newblock {\em arXiv submission}, pages 1--84, may 2017.

\bibitem{Collins2015}
Sean~M. Collins, Emilie Ringe, Martial Duchamp, Zineb Saghi, Rafal~E.
  Dunin-Borkowski, and Paul~A. Midgley.
\newblock {Eigenmode Tomography of Surface Charge Oscillations of Plasmonic
  Nanoparticles by Electron Energy Loss Spectroscopy}.
\newblock {\em ACS Photonics}, 2(11):1628--1635, nov 2015.

\bibitem{CVX}
Michael Grant, Stephen Boyd, and Yinyu Ye.
\newblock {CVX: Matlab software for disciplined convex programming}, 2008.

\bibitem{Fanelli2008}
Duccio Fanelli and Ozan {\"{O}}ktem.
\newblock {Electron tomography: A short overview with an emphasis on the
  absorption potential model for the forward problem}.
\newblock {\em Inverse Problems}, 24(1):013001, feb 2008.

\bibitem{Chambolle1997}
Antonin Chambolle and Pierre-Louis Lions.
\newblock {Image recovery via total variation minimization and related
  problems}.
\newblock {\em Numerische Mathematik}, 76(2):167--188, 1997.

\end{thebibliography}
	\appendix
	\section{Theorem \ref{smooth tensor}}\label{App: smooth tensor}
	\begin{theorem*}
		If
		\begin{enumerate}
			\item $c_i$ are $2k$ times continuously differentiable in $\Delta$ and $\Sigma$, $k\geq1$
			\item $c_1(x|0,\Sigma) = c_2(x|0,\Sigma)$ for all $x$ and $\Sigma\geq0$
			\item $\partial_\Delta^{2j-1} c_1(x|0,\Sigma) = \partial_\Delta^{2j-1} c_2(x|0,\Sigma) = 0$ for all $x$ and $\Sigma\geq0, j=1\ldots,k$
		\end{enumerate}
		Then $A_d$ is $C^{2k-1}$ with respect to $d$ for all $\rho>0,\sigma\geq0$.
	\end{theorem*}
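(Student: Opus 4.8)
The plan is to concentrate all of the non-smoothness of $A_d$ into a single, purely algebraic, finite-dimensional statement about a $2\times 2$ symmetric matrix. Since $\rho>0$, the map $d\mapsto\nabla d_\rho$ is convolution against the fixed smooth, rapidly decaying kernel $\nabla\exp(-|\cdot|^2/(2\rho^2))$, hence a bounded linear operator on $L^1$; the pointwise assignment $\nabla d_\rho\mapsto\nabla d_\rho\nabla d_\rho^T$ is a bounded quadratic map; and the $\sigma$-smoothing is again bounded linear (the identity when $\sigma=0$). Composing, $d\mapsto M:=(\nabla d_\rho\nabla d_\rho^T)_\sigma$ is a bounded quadratic map, in particular $C^\infty$. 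So it suffices to show that, for each fixed $x$, the matrix function $M\mapsto A(M,x)=c_1(x|\Delta,\Sigma)\vec e_1\vec e_1^T+c_2(x|\Delta,\Sigma)\vec e_2\vec e_2^T$ is $C^{2k-1}$ on the positive semidefinite cone, with enough uniformity in $x$ to transfer the conclusion back along $d\mapsto M$.

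Next I would eliminate the eigenvectors, which is where the naive expression loses regularity. Writing $P_i=\vec e_i\vec e_i^T$, from $M=\lambda_1P_1+\lambda_2P_2$ and $I=P_1+P_2$ one obtains $P_1-P_2=(2M-(\operatorname{tr}M)I)/\Delta$ and hence
\[ A(M,x)=\frac{c_1+c_2}{2}\,I+\frac{c_1-c_2}{2\Delta}\,\bigl(2M-(\operatorname{tr}M)I\bigr), \]
where $\Sigma=\operatorname{tr}M$ and $\Delta^2=(\operatorname{tr}M)^2-4\det M$ are polynomials in $M$. The only remaining obstruction is the occurrence of $\Delta=\sqrt{\Delta^2}$, both inside the arguments of $c_i$ and in the denominator of the second term.

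Now I would cash in (i)--(iii). For fixed $x$ and $\Sigma$, hypothesis (iii) says $\Delta\mapsto c_i(x|\Delta,\Sigma)$ is $C^{2k}$ on $[0,\infty)$ with all odd-order derivatives vanishing at $0$ through order $2k-1$; such a function has a $C^{2k}$ even extension to $\F R$, so by the classical lemma on differentiable even functions it equals $\hat c_i(x|\Delta^2,\Sigma)$ for some $\hat c_i$ differentiable in its first argument, and $\tfrac12(c_1+c_2)=\tfrac12(\hat c_1+\hat c_2)(x|\Delta^2,\Sigma)$ is then a differentiable function of the \emph{polynomial} $\Delta^2$ and of $\Sigma$, hence of $M$. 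For the second term, hypothesis (ii) gives $(\hat c_1-\hat c_2)(x|0,\Sigma)=0$, so Hadamard's lemma factors $\hat c_1-\hat c_2=\Delta^2\,\hat\eta(x|\Delta^2,\Sigma)$ and the second term becomes $\tfrac12\hat\eta(x|\Delta^2,\Sigma)\cdot\Delta\,\bigl(2M-(\operatorname{tr}M)I\bigr)$. Since $\Delta\,(2M-(\operatorname{tr}M)I)=\Delta^2(P_1-P_2)$ vanishes on the degeneracy locus $\{\Delta=0\}$, this product is at least continuous, and I would iterate the $\Delta^2$-factorisation — using the remaining even $\Delta$-derivatives of $c_1-c_2$ controlled by (i) together with repeated applications of Hadamard's lemma — trading each further power of $\Delta^2$ for one more $M$-derivative of $\Delta^2(P_1-P_2)$, with the count organised as an induction on $k$.

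The step I expect to be the main obstacle is exactly this last one: the regularity in $M$ of $\Delta^2(P_1-P_2)=\sqrt{\Delta^2}\,\bigl(2M-(\operatorname{tr}M)I\bigr)$ across the set of repeated eigenvalues, where $P_1$ and $P_2$ interchange discontinuously while the weight $\Delta^2$ vanishes. The off-the-shelf even-function lemma together with a single Hadamard factorisation only yields a low-order bound; reaching $C^{2k-1}$ requires exploiting the vanishing in (ii)--(iii) to its full strength against the precise structure of $\Delta^2(P_1-P_2)$, and making that accounting tight is the heart of the matter. The remaining ingredients — uniformity in $x$, boundedness of the auxiliary functions, and the composition with the smooth map $d\mapsto M$ — are routine. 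As a sanity check, when $\sigma=0$ the structure tensor $M=\nabla d_\rho\nabla d_\rho^T$ is rank one, $\{\Delta=0\}=\{\nabla d_\rho=0\}$, and the second term collapses to $\hat\eta\,\nabla d_\rho\nabla d_\rho^T$, manifestly as smooth in $d$ as $\nabla d_\rho$; it is the generic-rank-two case $\sigma>0$ for which the argument must genuinely work.
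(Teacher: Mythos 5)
Your reduction to the finite-dimensional map $M\mapsto A(M)$ and the eigenvector-free formula $A=\tfrac{c_1+c_2}{2}I+\tfrac{c_1-c_2}{2\Delta}\bigl(2M-\op{tr}(M)I\bigr)$ are correct, and in fact tidier than the paper's treatment, which keeps explicit formulas for $e_1$ and splits into the cases $M_{12}\neq0$ and $M_{12}=0$ at non-degenerate points before Taylor-expanding $c_i$ about $\Delta=0$ at degenerate ones. For $k=1$ your argument closes: hypothesis (ii) plus a single Hadamard factorisation gives $c_1-c_2=\Delta^2\hat\eta$, the troublesome term becomes $\tfrac12\hat\eta\,\Delta\,(2M-\op{tr}(M)I)=O(\norm{M-mI}^2)$, and differentiability follows; this reproduces parts (i)--(iii) of the paper's proof.

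The gap is exactly where you predicted, and it cannot be closed by the iteration you sketch. Extracting further powers $\Delta^4,\Delta^6,\dots$ by repeated Hadamard factorisation requires $\partial_s^j(\hat c_1-\hat c_2)(0,\Sigma)=0$, i.e. $\partial_\Delta^{2j}c_1(x|0,\Sigma)=\partial_\Delta^{2j}c_2(x|0,\Sigma)$ for $j\geq1$; the hypotheses only equate the values (ii) and annihilate the \emph{odd} derivatives (iii), so the iteration stops after one step. What remains is $\tfrac12\hat\eta(\Delta^2,\Sigma)\,\Delta\,(2M-\op{tr}(M)I)$ with $\hat\eta(0,\Sigma)\neq0$ in general; writing $u=M_{11}-M_{22}$, $v=2M_{12}$, the offending entry is $\sqrt{u^2+v^2}\,u$, which is $C^1$ but has a discontinuous second derivative at the origin. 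Concretely, $c_1=1+\Delta^2$, $c_2=1$ satisfies (i)--(iii) for every $k$, yet gives $A=I+\tfrac{\Delta^2}{2}I+\tfrac{\Delta}{2}(2M-\op{tr}(M)I)$, which is not $C^2$ in $M$ across $\{\Delta=0\}$. So for $k\geq2$ one either needs additional hypotheses on the even $\Delta$-derivatives of $c_1-c_2$, or an argument that the composition with $d\mapsto M$ behaves better than $M\mapsto A$ near the degeneracy locus. Be aware that the paper's own proof of part (iv) is silent on this same point: it Taylor-expands the scalar functions $c_i$ and controls the $O(\Delta^{2k})$ remainder, but the intermediate terms $\Delta^{2j}\partial_\Delta^{2j}c_i(0,\Sigma)\,e_ie_i^T$ still carry the discontinuous eigenprojections and, absent the extra matching conditions, are only $C^{2j-1}$ in $M$. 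Your proposal therefore matches the paper where the paper is sound ($k=1$) and isolates precisely the step the paper does not justify for $k\geq2$.
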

	In this proof we will drop the $x$ argument from $c_i$ for conciseness of notation.	Define
	\[M_d = (\nabla d_\rho\nabla d_\rho^T)_\sigma\]
	Note that convolutions are bounded linear maps and $\nabla d_\rho\in L^2$ by Young's inequality hence $M\colon L^1(\F R^2,\F R)\to L^\infty(\F R^2,\op{Sym}_2)$ is well defined and differentiable w.r.t. $d$. Hence, it suffices to show that $A$ is differentiable w.r.t. $M$ where
	\[M_d = \lambda_1e_1e_1^T+\lambda_2e_2e_2^T, \lambda_1\geq \lambda_2\implies A = c_1(\Delta,\Sigma)e_1e_1^T+c_2(\Delta,\Sigma)e_2e_2^T \]
	where $\Delta=\lambda_1-\lambda_2, \Sigma=\lambda_1+\lambda_2$. Note that this is not a trivial statement, we can envisage very simple cases in which the (ordered) eigenvalue decomposition is not even continuous. For instance \[M(t) = \MTwo{1-t}{0}{0}{t} \implies \Sigma(t) = 1, \Delta(t) = |1-2t|, e_1 = \splitln{(1,0)^T}{t<\sfrac12}{(0,1)^T}{t>\sfrac12}\]
	Hence we can see that despite having $M\in C^\infty$ we cannot even guarantee that the decomposition is continuous and so cannot employ any chain rule to say that $A$ is smooth.
	
	The structure of this proof breaks into 4 parts:
	\begin{enumerate}
		\item If $c_1(0,\Sigma) = c_2(0,\Sigma)$ then $A$ is well defined
		\item If $c_i\in C^2$ for some open neighbourhood of point $x$ such that $\lambda_1(x)>\lambda_2(x)$ then $A$ is differentiable w.r.t. $M$ on an open neighbourhood of $x$
		\item If $\partial_\Delta c_1(0,\Sigma) = \partial_\Delta c_2(0,\Sigma) = 0$ at a point, $x$, where $\lambda_1(x)=\lambda_2(x)$ then $A$ is differentiable on an open neighbourhood of $x$
		\item If $\partial_\Delta^{2j-1} c_1(0,\Sigma) = \partial_\Delta^{2j-1} c_2(0,\Sigma) = 0$ at a point $x$ where $\lambda_1(x)=\lambda_2(x)$ and for all $j=1\ldots,k$ then $A$ is $C^{2k-1}$ on an open neighbourhood of $x$
	\end{enumerate}
	\begin{proof}\hfill
	\\Proof of part i: Note that when $\lambda_1=\lambda_2$ the choice of $e_i$ is non-unique subject to $e_1e_1^T + e_2e_2^T = \op{id}$ and so we get \[A = c_1(0,\Sigma)\op{id} + (c_2(0,\Sigma)-c_1(0,\Sigma))e_2e_2^T\]
	Hence $A$ is well defined if and only if $c_1(0,\Sigma) = c_2(0,\Sigma)$ for all $\Sigma\geq0$.
	
	\bigbreak
	As we are decomposing $2\times 2$ matrices it will simplify the proof to write explicit forms for the values under consideration.
	\[M = \MTwo{M_{11}}{M_{12}}{M_{12}}{M_{22}}\implies \lambda_i = \frac{M_{11}+M_{22} \pm \sqrt{(M_{11}-M_{22})^2+4M_{12}^2}}{2}\]
	\[\Sigma = M_{11} + M_{22}, \quad \Delta = \sqrt{(M_{11}-M_{22})^2+4M_{12}^2}\]
	\[\Delta\neq 0 \implies e_1 = \frac{(2M_{12},\Delta-M_{11}+M_{22})^T}{\sqrt{(\Delta-M_{11}+M_{22})^2+4M_{12}^2}}=\frac{(\Delta+M_{11}-M_{22},2M_{12})^T}{\sqrt{(\Delta+M_{11}-M_{22})^2+4M_{12}^2}},\]\[ \quad e_2 = \MTwo{0}{-1}{1}{0}e_1\]
	We give two equations for $e_1$ to account for the case when we get $\frac{(0,0)^T}{0}$.
	\bigbreak Proof of part ii: Note that $\Sigma$ is always smooth and $\Delta$ is smooth on the set $\{\Delta>0\}$
	\\Case $M_{12}(x)\neq 0$: Now both equations of $e_1$ are valid (and equal) and the denominators non-zero on a neighbourhood. Hence, we can apply the standard chain rule and product rule to conclude.
	\\Case $M_{12}(x)=0$: In this case $M(x)$ is diagonal but as $\Delta = |M_{11}-M_{22}|>0$ we know that one of our formulae for $e_1$ must be valid with non-vanishing denominator in a neighbourhood and so we can conclude as in the first case.
	\bigbreak Proof of part iii: Given the Neumann condition on $c_i$ we shall express $c_i$ locally by Taylor's theorem.
	\[c_i(\Delta,\Sigma) = c_i(0,\Sigma) + O(\Delta^2) = c_1(0,\Sigma)+O(\Delta^2)\]
	Now we consider a perturbation:
	\[M = \MTwo{m}{0}{0}{m}, \quad \epsilon = \MTwo{\epsilon_{11}}{\epsilon_{12}}{\epsilon_{12}}{\epsilon_{22}}\]
	\[\implies A(M+\epsilon)-A(M) = (c_1(0,2m+\epsilon_{11}+\epsilon_{22})-c_1(0,2m))\op{id} +O(\Delta^2)\]
	\[\Delta^2 = (\epsilon_{11}-\epsilon_{22})^2+ 4\epsilon_{12}^2 = O(\norm{\epsilon}^2)\implies O(\Delta^2)\leq O(\norm{\epsilon}^2)\]
	\[\implies \frac{A(M+\epsilon)-A(M)}{\norm{\epsilon}} = \frac{\partial_\Sigma c_1(0,2m)\op{tr}(\epsilon)}{\norm{\epsilon}} + O(\norm{\epsilon})\]
	In particular, $A$ is differentiable w.r.t. $M$ at $x$.
	\bigbreak Proof of part iv: The proof of this follows exactly as the previous part,
	\[c_i(\Delta,\Sigma) = \sum_0^{k-1} \frac{\Delta^{2j}}{j!}\partial_\Delta^{2j}c_i(0,\Sigma) + O(\Delta^{2k})\]
	where the remainder term is sufficiently smooth. Hence $c_i$ is at least $2k-1$ times differentiable w.r.t. $M$
	\end{proof}
	\section{Theorem \ref{conv thm}}\label{App: conv thm}
	\begin{theorem*}
		If 
		\begin{itemize}
			\item $c_i$ are bounded away from 0
			\item $\rho>0$
			\item $A_d$ is differentiable in $d$
		\end{itemize}
		then sub-level sets of $E$ are weakly compact in $L^2(\Omega,\F R)\times L^2(\F R^2,\F R)$ and $E$ is weakly lower semi-continuous. i.e. for all $(u_n,v_n)\in L^2(\Omega,\F R)\times L^2(\F R^2,\F R)$:
		\[E(u_n,v_n) \text{ uniformly bounded implies a subsequence converges weakly} \]
		\[\liminf E(u_n,v_n) \geq E(u,v) \text{ whenever } u_n\rightharpoonup u, v_n\rightharpoonup v\]
	\end{theorem*}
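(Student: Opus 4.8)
The plan is to apply the direct method of the calculus of variations: first show that the sublevel sets of $E$ are bounded in $L^2(\Omega,\F R)\times L^2(\F R^2,\F R)$, so that weak sequential compactness follows from reflexivity, and then establish weak lower semi-continuity term by term. Every summand of $E$ except the coupled penalty $\DTV_{\C Ru}(v)=\norm{A_{\C Ru}\nabla v}_{2,1}$ is either a composition of a bounded linear map with a convex continuous functional, or (for $\TV$) a supremum of weakly continuous linear functionals, or the indicator of the weakly closed convex set $\{u\ge0\}$, so lower semi-continuity of those is routine; the real content is the coupled term, which I will handle through its dual formulation.

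\textbf{Weak compactness.} Let $E(u_n,v_n)\le C$; since every summand is nonnegative, each is $\le C$. From $\beta_1\TV(u_n)\le C$ I bound $\TV(u_n)$. From $\norm{S\C Ru_n-b}_2\le(2C/\alpha_2)^{1/2}$ and boundedness of $\Omega'$ (Cauchy--Schwarz) I bound $\norm{S\C Ru_n}_{L^1(\Omega')}$; fixing one sampled direction $\theta_0$, using $u_n\ge0$, and integrating $\C Ru_n(\theta_0,\cdot)$ over the detector (Fubini) recovers $\int_\Omega u_n$, so $\norm{u_n}_{L^1}$ is bounded, and the Poincar\'e inequality $\norm{u_n-\frac{1}{|\Omega|}\int_\Omega u_n}_2\lesssim\TV(u_n)$ recalled in Section \ref{prelims} then bounds $\norm{u_n}_{L^2(\Omega)}$. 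Since $\C R$ maps $L^2(\Omega)$ boundedly into $L^2$ of the (bounded) sinogram domain --- lines meeting $\Omega$ have uniformly bounded length --- also $\norm{\C Ru_n}_2$ is bounded. Because the $c_i$ are bounded away from $0$ we have $A_{\C Ru_n}\succeq c_{\min}\op{id}$ pointwise, so $\beta_2c_{\min}\TV(v_n)\le\beta_2\norm{A_{\C Ru_n}\nabla v_n}_{2,1}\le C$ bounds $\TV(v_n)$; combining $\norm{Sv_n-b}_2\le(2C/\alpha_3)^{1/2}$ (which controls $v_n$ on $\Omega'$) with $\norm{\C Ru_n-v_n}_{\alpha_1}\le(2C)^{1/2}$, positivity of $\alpha_1$ on $\Omega'^c$, and the bound on $\norm{\C Ru_n}_2$ (which controls $v_n$ on $\Omega'^c$) bounds $\norm{v_n}_{L^2}$. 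Hence $(u_n,v_n)$ is bounded in a reflexive space and a subsequence converges weakly.

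\textbf{Weak lower semi-continuity.} Let $u_n\rightharpoonup u$ in $L^2(\Omega,\F R)$ and $v_n\rightharpoonup v$ in $L^2(\F R^2,\F R)$. If $\liminf E(u_n,v_n)=+\infty$ there is nothing to prove, so pass to a subsequence along which $E(u_n,v_n)$ converges to a finite limit; then, arguing as above, $\TV(u_n)$, $\TV(v_n)$, $\norm{S\C Ru_n-b}_2$, $\norm{Sv_n-b}_2$ and $\norm{\C Ru_n-v_n}_{\alpha_1}$ are uniformly bounded, while the uniform boundedness principle already gives $\sup_n\norm{u_n}_2<\infty$ and hence $\sup_n\norm{\C Ru_n}_2<\infty$. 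Pass to a further subsequence so that each of the five summands of $E(u_n,v_n)$ converges. Boundedness of the linear maps $\C R$ and $S=\1_{\Omega'}$ gives $\C Ru_n\rightharpoonup\C Ru$, $S\C Ru_n\rightharpoonup S\C Ru$, $Sv_n\rightharpoonup Sv$; the functionals $w\mapsto\tfrac12\norm{w}_{\alpha_1}^2$, $w\mapsto\tfrac{\alpha_2}2\norm{w-b}_2^2$, $w\mapsto\tfrac{\alpha_3}2\norm{w-b}_2^2$ are convex and strongly continuous, hence weakly l.s.c., and remain so after composition with the above weakly continuous affine maps; $\TV$ is weakly l.s.c. on $L^2$ as a supremum of the weakly continuous functionals $g\mapsto\int g\,\op{div}\varphi$; and $u\ge0$ passes to the weak limit. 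It remains to treat $\DTV_{\C Ru}(v)$.

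\textbf{The coupled term, and the main obstacle.} Here the hypothesis $\rho>0$ is essential. Convolution against the fixed Gaussian $G_\rho$, together with $\sup_n\norm{\C Ru_n}_2<\infty$, upgrades the weak-$L^2$ convergence $\C Ru_n\rightharpoonup\C Ru$ to convergence of $(\C Ru_n)_\rho$ and all of its derivatives uniformly on compact sets (pointwise convergence by testing against $G_\rho(x-\cdot)\in L^2$; equicontinuity and uniform bounds from the smooth kernel; then Arzel\`a--Ascoli); consequently the structure tensors $M_{\C Ru_n}=(\nabla(\C Ru_n)_\rho\nabla(\C Ru_n)_\rho^T)_\sigma$ converge to $M_{\C Ru}$ uniformly on compacts, and, since $A$ is a continuous function of $M$ --- exactly the well-posedness established in the proof of Theorem \ref{smooth tensor}, and implied by the standing hypothesis that $A_d$ is differentiable in $d$ --- the matrix fields $A_{\C Ru_n}$ converge to $A_{\C Ru}$ uniformly on compacts. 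Now fix $\psi\in C_c^\infty(\F R^2,\F R^2)$ with $|\psi(x)|_2\le1$ everywhere; the uniform bound on $\TV(v_n)$ lets us pass to a further subsequence with $\nabla v_n\rightharpoonup\nabla v$ weakly-$*$ as $\F R^2$-valued measures of uniformly bounded total variation. Using that $\psi$ is admissible in the dual characterisation of $\norm{\cdot}_{2,1}$, that $A_{\C Ru_n}$ is symmetric, and that $A_{\C Ru_n}\psi\to A_{\C Ru}\psi$ uniformly with fixed compact support, we get
\[\liminf_{n\to\infty}\norm{A_{\C Ru_n}\nabla v_n}_{2,1}\ \ge\ \lim_{n\to\infty}\int\IP{\nabla v_n}{A_{\C Ru_n}\psi}\ =\ \int\IP{A_{\C Ru}\nabla v}{\psi},\]
and taking the supremum over all such $\psi$ gives $\liminf_n\DTV_{\C Ru_n}(v_n)\ge\DTV_{\C Ru}(v)$. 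Summing the term-by-term inequalities along the chosen subsequence then yields $\liminf E(u_n,v_n)\ge E(u,v)$. The one step that is not bookkeeping is this last one: because $\nabla v_n$ converges only as a measure while the weight $A_{\C Ru_n}$ itself moves, one cannot pass to the limit directly inside $\int\lvert A_{\C Ru_n}\nabla v_n\rvert$; the device is to pair the weak-$*$ convergent $\nabla v_n$ against the \emph{uniformly} convergent, compactly supported test field $A_{\C Ru_n}\psi$, and I expect the part deserving the most care to be verifying that weak-$L^2$ convergence of $\C Ru_n$ really does propagate, through the heat-kernel smoothing and the structure tensor, to uniform convergence of $A_{\C Ru_n}$ on compact sets --- precisely the place where $\rho>0$ and the continuity of $d\mapsto A_d$ are used.
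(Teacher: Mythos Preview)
Your proposal is correct and follows essentially the same route as the paper: both use $c_i\ge c_{\min}>0$ to dominate $\DTV$ below by $\TV$ for compactness, both upgrade weak convergence of $\C Ru_n$ to uniform convergence of $A_{\C Ru_n}$ via the Gaussian smoothing at scale $\rho>0$ (the paper isolates this as a separate Arzel\`a--Ascoli lemma), and both finish the coupled term via the dual formulation of $\norm{\cdot}_{2,1}$. The only notable difference is packaging: the paper first freezes the weight, proving $\bigl|\norm{A_{\C Ru_n}\nabla v_n}_{2,1}-\norm{A_{\C Ru}\nabla v_n}_{2,1}\bigr|\le\norm{A_{\C Ru_n}-A_{\C Ru}}_{2,\infty}\TV(v_n)\to0$, and then applies lower semi-continuity of the now-fixed-weight functional $w\mapsto\norm{A_{\C Ru}\nabla w}_{2,1}$; you instead pair the weak-$*$ convergent $\nabla v_n$ directly against the uniformly convergent moving test field $A_{\C Ru_n}\psi$. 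Your variant is slightly more streamlined; the paper's has the minor advantage that once the weight is frozen the remaining step is the standard lsc of a convex integrand, with no need to appeal to the ``weak-$*$ times uniform'' product limit. You are also more explicit on compactness (extracting $\int_\Omega u_n$ from one sampled projection via Fubini and $u_n\ge0$), whereas the paper simply cites a classical reference; your argument tacitly assumes $\Omega'$ contains the full detector line for at least one angle, which is indeed the geometry of the limited angle setup.
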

	\begin{proof}
	If $c_i$ are bounded away from 0 then in particular we have $A_{\C Ru_n}\gtrsim1$ so $\DTV_{u_n}(v_n) = \norm{A_{\C Ru_n}\nabla v_n} \gtrsim \norm{\nabla v_n} = \TV(v_n)$. Hence, 
	\begin{equation*}\begin{split}
	E(u_n,v_n) \text{ uniformly bounded } & \implies \\ &\norm{S_{\Omega'^c}(\C Ru_n-v_n)}_2^2 + \norm{S_{\Omega'}\C Ru_n-b}_2^2 + \norm{S_{\Omega'}v_n-b}_2^2 
	\\&\qquad+ \TV(u_n) + \TV(v_n) \text{ uniformly bounded} 
	\\&\implies \norm{A(u,v)^T - b}_2^2 + \TV\left((u,v)\right) \text{ uniformly bounded} 
	\end{split}\end{equation*}
	for some linear $A$ and constant $b$. Thus we are in a very classical setting where weak compactness can be shown in both the $\norm{(u,v)}_2$ norm and $\norm{(u,v)}_1+\TV((u,v))$ \cite{Chambolle1997}.
	
	\bigbreak We now proceed to the second claim, that $E$ is weakly lower semi-continuous. Note that all of the convex terms in our energy are lower semi-continuous by classical arguments so it remains to show that the non-convex term is too. i.e. 
	\[(u_n,v_n)\rightharpoonup (u,v)\stackrel{?}{\implies} \liminf\norm{A_{\C Ru_n}\nabla v_n}_{2,1} \geq \norm{A_{\C Ru}\nabla v}_{2,1}\]
	We shall first present a lemma from distribution theory.
	\begin{lemma}
		If $\Omega$ is compact, $\varphi\in C^\infty(\Omega,\F R)$ and $w_n\stackrel{L^p}{\rightharpoonup} w$ then 
		\[w_n\star \varphi \to w\star \varphi \text{ convergence in } C^k(\Omega,\F R) \text{ for all } k<\infty\]
	\end{lemma}
	\begin{proof}
		Recall that $w_n\rightharpoonup w\implies \norm{w_n}_p\leq W $ for some $W<\infty$. By H\"older's inequality:
		\[|w_n\star\varphi (x) - w\star\varphi (y)| \leq \int |w_n(z)||\varphi(x-z)-\varphi(y-z)| \lesssim_{p,\Omega} |x-y|W\norm{\varphi'}_\infty\]
		\[|w_n\star\varphi(x)|\lesssim_{p,\Omega} W\norm{\varphi}_\infty \]
		i.e. $\{w_n\st n\in \F N\}$ is uniformly bounded and uniformly (Lipschitz) continuous.
		\[w_n\rightharpoonup w \implies w_n\star \varphi(x)-w\star\varphi(x) = \IP{w_n-w}{\varphi(x-\cdot)} \to 0 \text{ for every } x\]
		Hence, we also know $w_n\star \varphi$ converges point-wise to a unique continuous function. Suppose $\norm{w_{n_k}\star \varphi-w\star \varphi}_\infty \geq \epsilon>0$ for some $\epsilon$ and sub-sequence $n_k\to\infty$. By the Arzela-Ascoli theorem some further subsequence must converge uniformly to the point-wise limit, $w\star \varphi$, which gives us the required contradiction. Hence, $w_n\star \varphi\to w\star \varphi$ in $L^\infty = C^0$. The general theorem follows by induction.
	\end{proof}
	This lemma gives us two direct results.
	\[\rho>0\implies (Ru_n)_\rho \to (Ru)_\rho\text{ in }L^\infty\]
	\[\{(Ru_n)_\rho\}\union\{(Ru)_\rho\} \text{ compact, } A_d\in C^1(d) \implies A_{Ru_n} \to A_{Ru} \text{ in } \norm{\cdot}_{2,\infty}\]
	Hence, whenever $w\in W^{1,1}$ we have
	\begin{align*}
	\norm{A_{\C Ru_n}\nabla w} &\geq \norm{A_{\C Ru}\nabla w} - \norm{(A_{\C Ru_n}-A_{\C Ru})\nabla w} \notag
	\\&\geq \norm{A_{\C Ru}\nabla w}-\norm{A_{\C Ru_n}-A_{\C Ru}}_{2,\infty}\TV(w)
	\end{align*}
	By density of $W^{1,1}$ in the space of Bounded Variation, we know the same holds for $w=v_n$. We also know $\TV(v_n)$ is uniformly bounded thus \[\liminf \norm{A_{\C Ru_n}\nabla v_n} = \liminf \norm{A_{\C Ru}\nabla v_n}\]
	Hence, for all $\norm{\varphi}_{2,\infty}\leq 1$ we have
	\[\IP{v}{\nabla \cdot(A_{\C Ru}\varphi)} = \liminf_n \IP{v_n}{\nabla \cdot(A_{\C Ru}\varphi)} \leq \liminf \norm{A_{\C Ru}\nabla v_n} \leq \liminf \norm{A_{\C Ru_n}\nabla v_n}\]
	as required.
	\end{proof}
\end{document}


\title{Influence of Hyper-Parameters on Reconstruction}
\author{}
\maketitle

As has been noted in the main text, there are many hyper-parameters to tune for the best reconstruction. We commonly found that reconstructions were qualitatively insensitive near the optimal parameter choice but we include here some illustrations of the typical effect of each parameter. The full model is 
$$ E(u,v) = \frac{1}{2}\norm{\C Ru-v}^2_{\alpha_1} + \frac{\alpha_2}{2}\norm{S\C Ru-b}_2^2
+ \frac{\alpha_3}{2}\norm{Sv-b}_2^2 + \beta_1\op{TV}(u) + \beta_2\DTV(v)$$
To remove a degree of equivalence we have always normalised such that $\alpha_2=1$. To construct the directional TV functional we need 2 smoothing parameters, $\rho$ and $\sigma$
\begin{align*}
A_d(x) &\coloneqq c_1(\lambda_1(x),\lambda_2(x))\vec e_1(x)\vec e_1(x)^T + c_2(\lambda_1(x),\lambda_2(x))\vec e_2(x)\vec e_2(x)^T
\\&\text{ such that } (\nabla d_\rho \nabla d_\rho^T)_\sigma(x) = \lambda_1(x)\vec e_1(x)\vec e_1(x)^T + \lambda_2(x)\vec e_2(x)\vec e_2(x)^T
\\&\qquad\qquad\quad \lambda_1(x)\geq \lambda_2(x)\geq0
\end{align*}
Again, we kept $\rho=1$ fixed and only show reconstructions for different values of $\sigma$. The optimal parameters for the Shepp-Logan phantom referred to below were
$$\alpha_1 =\frac1{4^2}\1_{\Omega'^c}, \alpha_3 = 3\times10^{-1}, \beta_1=3\times10^{-5}, \beta_2=3\times10^{2}, \beta_3=10^{10}, \sigma=8 $$

\clearpage
\begin{figure}
	\includegraphics[width=\linewidth,trim={180 75 125 45},clip]{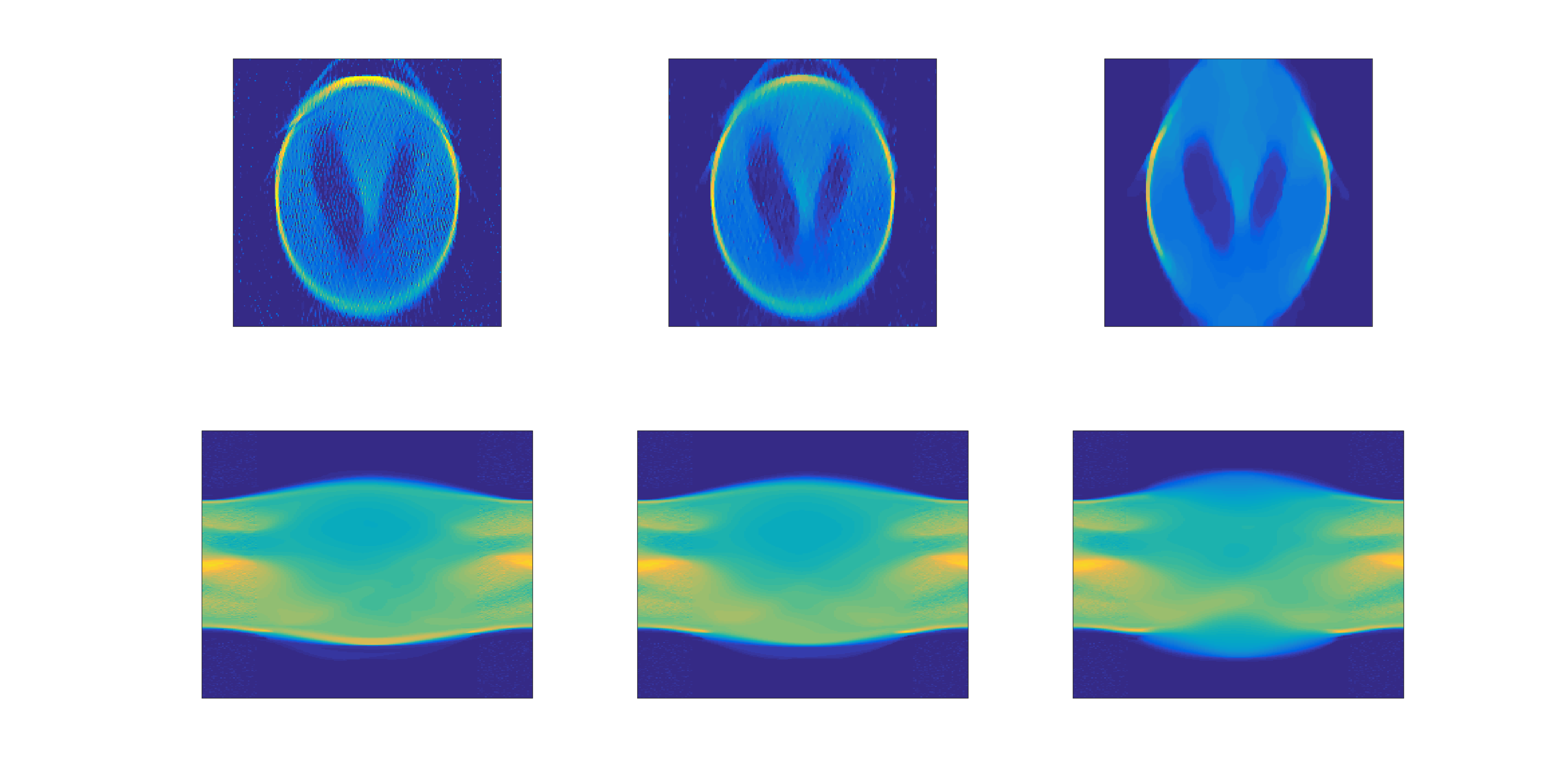}
	\caption{Varying reconstruction for low (first column), optimal (middle column) and high (right column) values of $\beta_1$ (TV regularisation parameter). `low' is a factor of 0.1 lower than `optimal' and `high' a factor of 10 higher.} \label{param fig1}
\end{figure}
\begin{figure}
	\includegraphics[width=\linewidth,trim={180 75 125 45},clip]{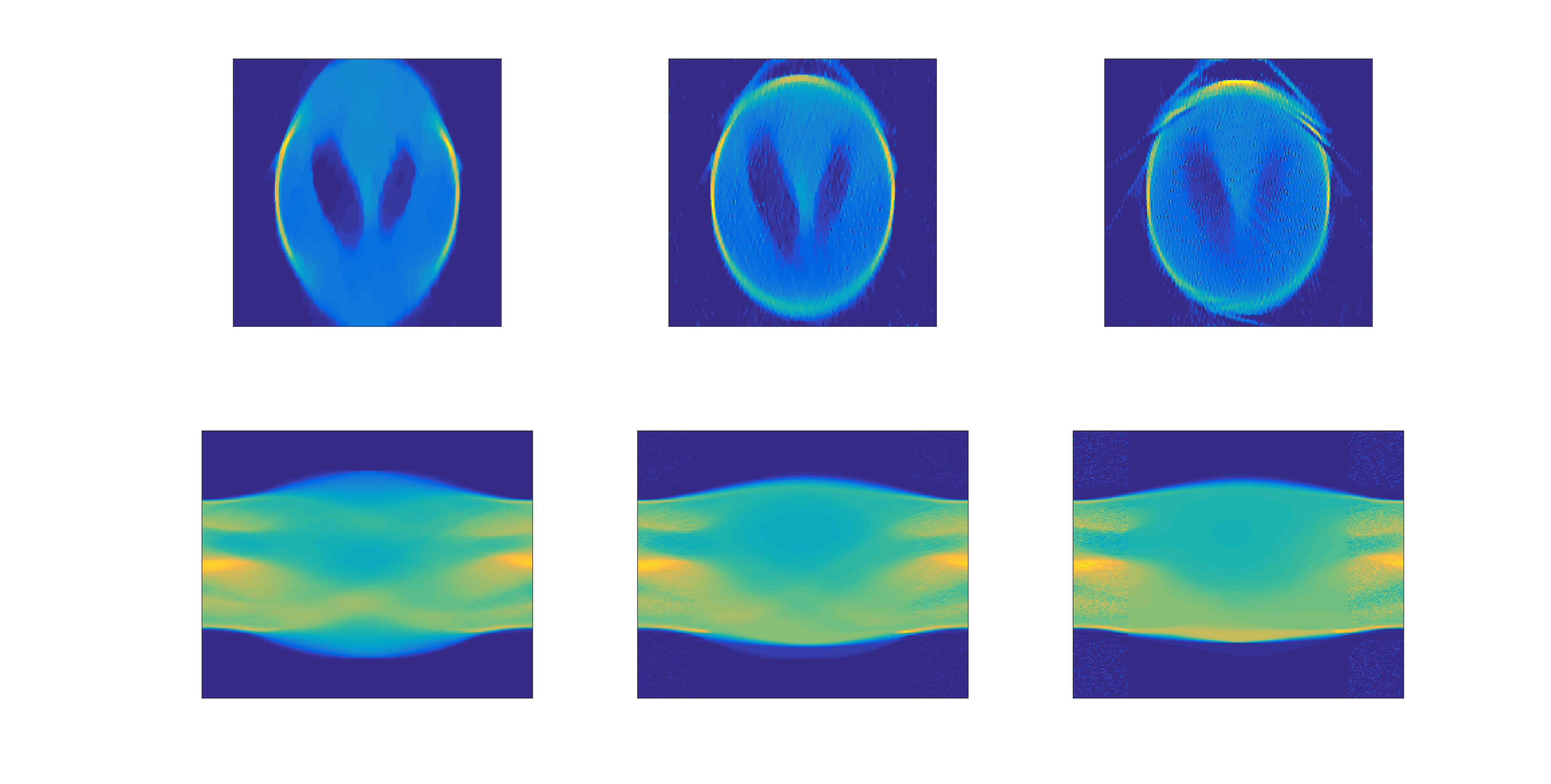}
	\caption{Varying reconstruction for low (first column), optimal (middle column) and high (right column) values of $\beta_2$ (DTV regularisation parameter). `low' is a factor of 0.1 lower than `optimal' and `high' a factor of 10 higher.}
\end{figure}
\begin{figure}
	\includegraphics[width=\linewidth,trim={180 75 125 45},clip]{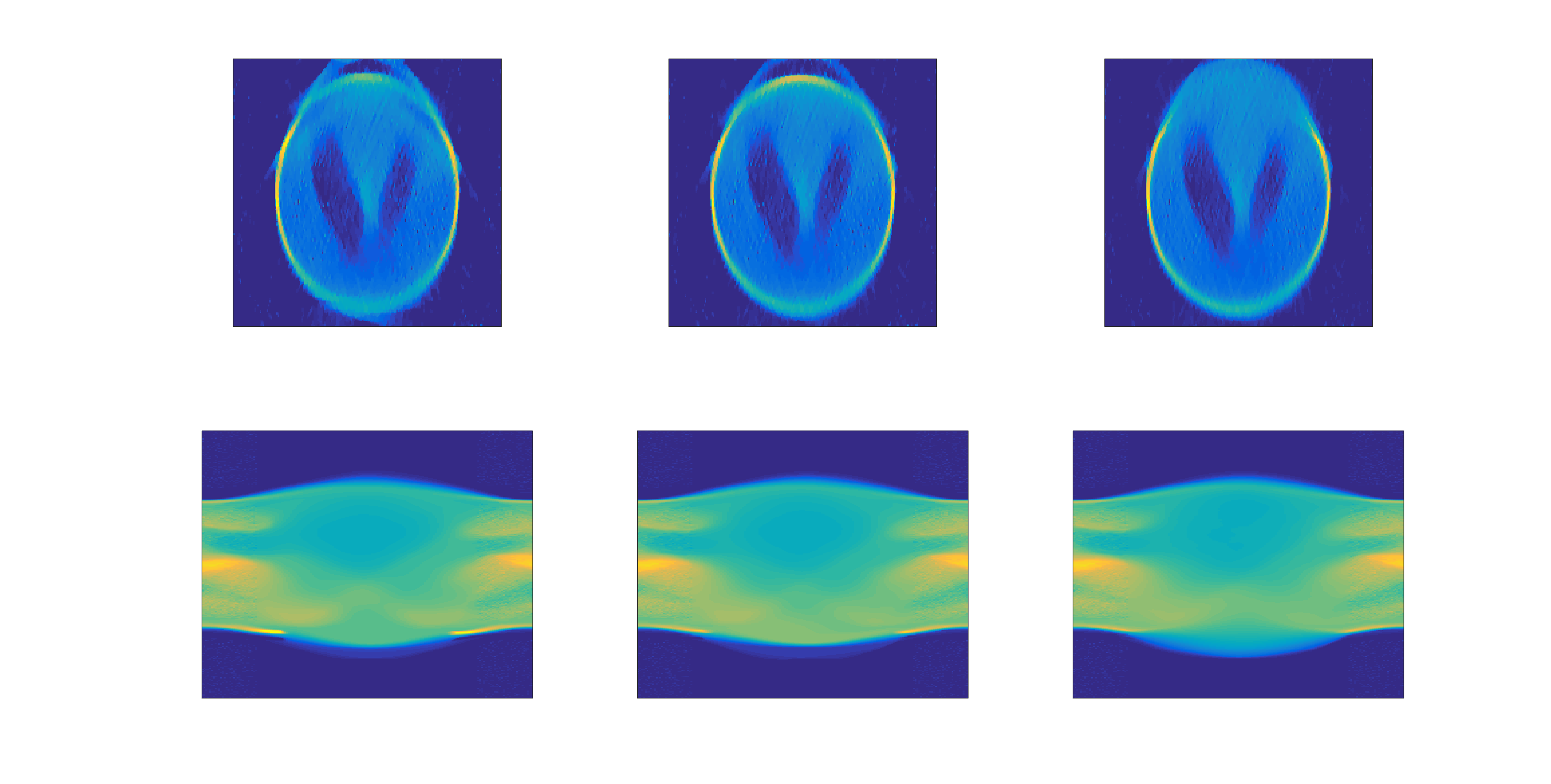}
	\caption{Varying reconstruction for low (first column), optimal (middle column) and high (right column) values of $\alpha_1$ (pairing term between $u$ and $v$). `low' is a factor of 0.1 lower than `optimal' and `high' a factor of 10 higher.}
\end{figure}
\begin{figure}
	\includegraphics[width=\linewidth,trim={180 75 125 45},clip]{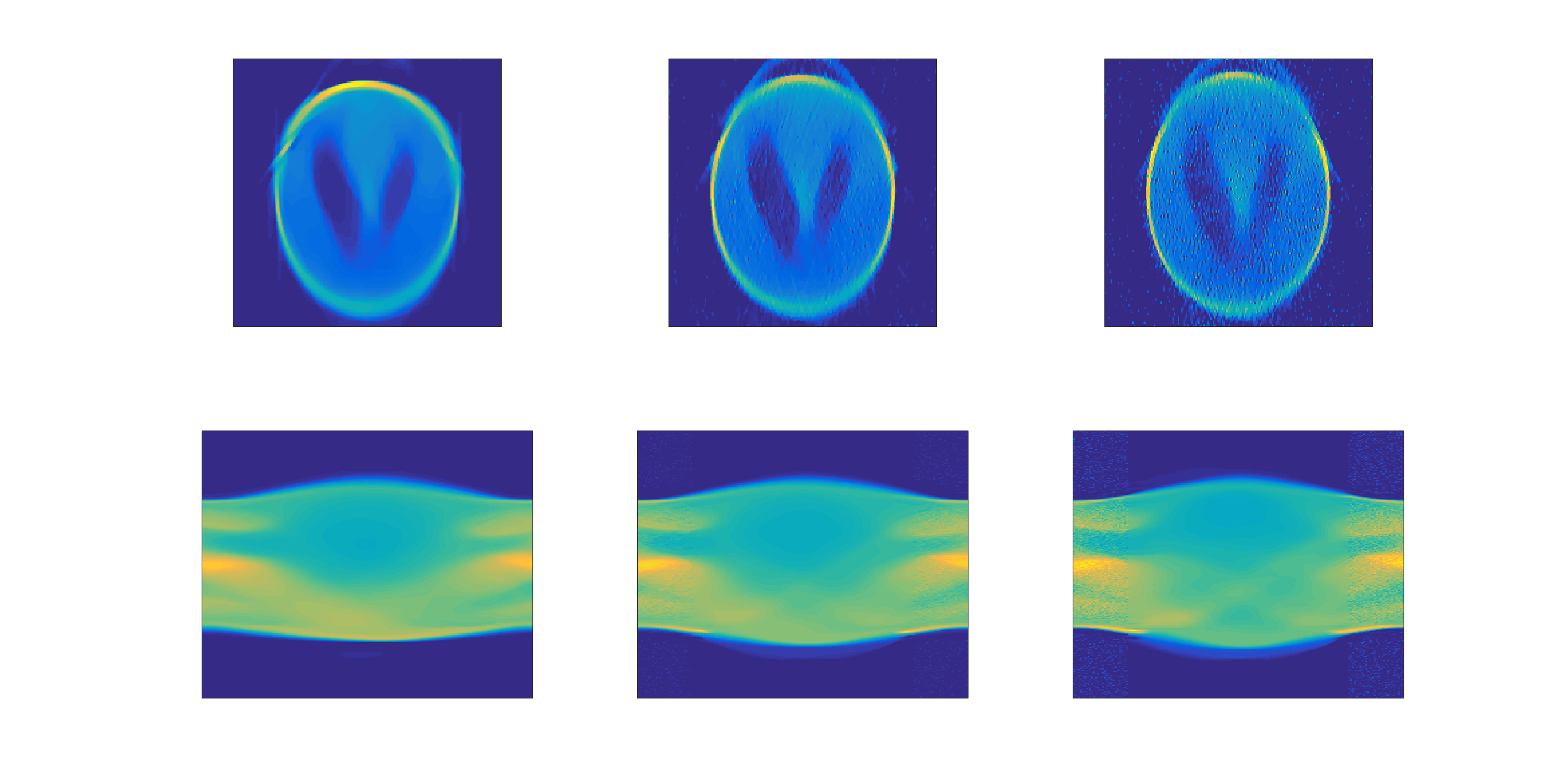}
	\caption{Varying reconstruction for low (first column), optimal (middle column) and high (right column) values of $\alpha_3$ (sinogram noise parameter). `low' is a factor of 0.1 lower than `optimal' and `high' a factor of 10 higher.}
\end{figure}
\begin{figure}
	\includegraphics[width=\linewidth,trim={180 75 125 45},clip]{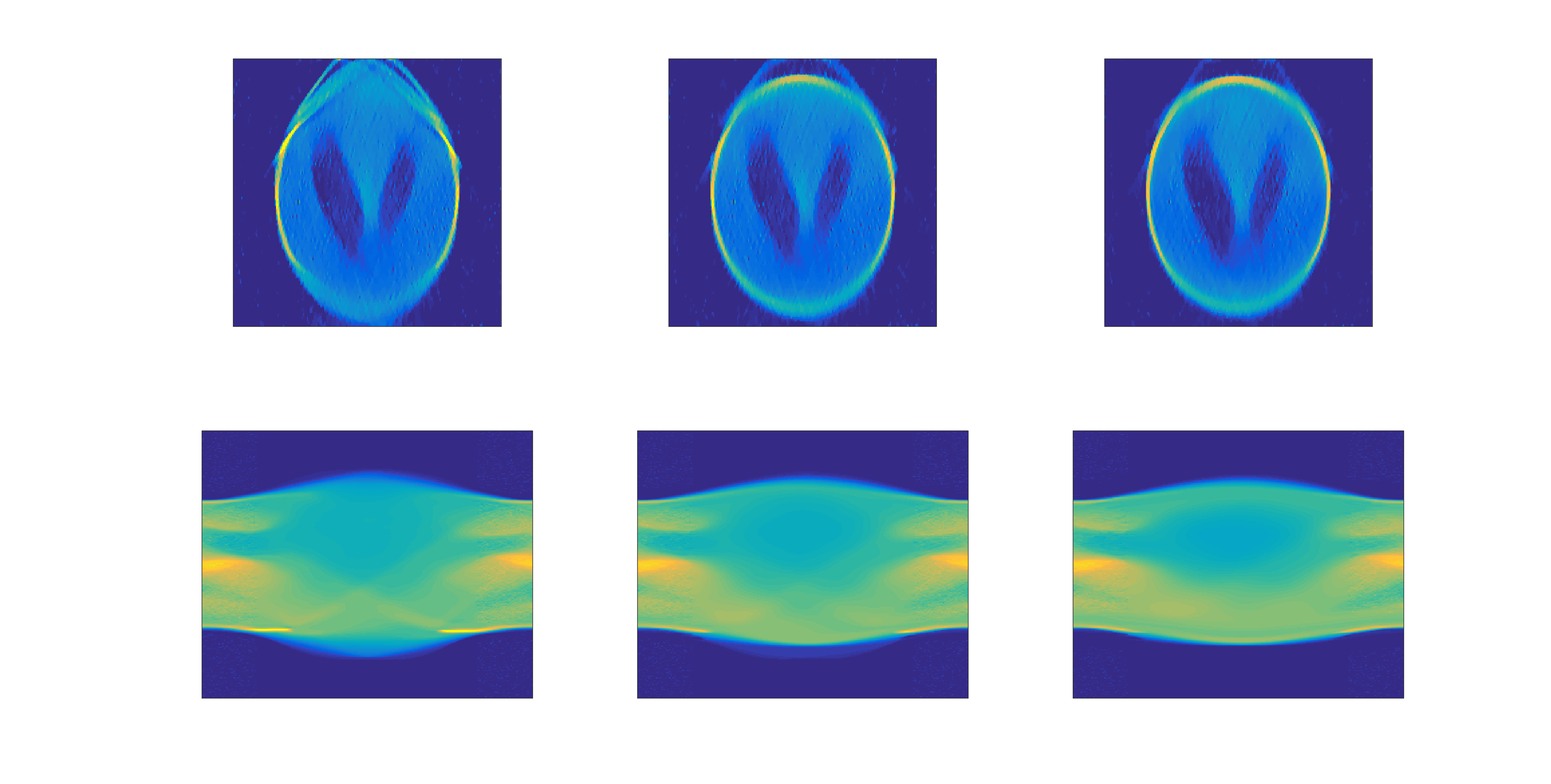}
	\caption{Varying reconstruction for low (first column), optimal (middle column) and high (right column) values of $\sigma$ (smoothing parameter inside DTV functional). `low' is a factor of 0.5 lower than `optimal' and `high' a factor of 2 higher.}\label{param figEnd}
\end{figure}